\newcommand{\mc}[1]{\mathcal{#1}}
\newcommand{\mr}[1]{\mathrm{#1}}
\newcommand{\norm}[1]{\lVert #1 \rVert}
\newcommand{\br}[1]{\overline{#1}}
\newcommand{\ul}[1]{\underline{#1}}
\newcommand{\td}[1]{\widetilde{#1}}
\newcommand{\ZZ}{\mathbb{Z}}
\newcommand{\CC}{\mathbb{C}}
\newcommand{\QQ}{\mathbb{Q}}
\newcommand{\FF}{\mathbb{F}}
\newcommand{\GG}{\mathbb{G}}
\newcommand{\MS}{\mathbb{S}}
\newcommand{\AF}{\mathbb{A}}
\theoremstyle{definition}
 \newtheorem{thm}[equation]{Theorem}
 \newtheorem{cor}[equation]{Corollary}
 \newtheorem{lem}[equation]{Lemma}
 \newtheorem{prop}[equation]{Proposition}
 \newtheorem{rmk}[equation]{Remark}
\newtheorem*{thm*}{Theorem}
\newtheorem*{cor*}{Corollary}
\newtheorem*{lem*}{Lemma}
\newtheorem*{prop*}{Proposition}
\newtheorem*{defn*}{Definition}
\newtheorem*{ex*}{Example}
\newtheorem*{exs*}{Examples}
\newtheorem*{rmk*}{Remark}
\newtheorem*{claim*}{Claim}
\numberwithin{equation}{section}
\numberwithin{figure}{section}
\DeclareMathOperator{\Ext}{Ext}
\DeclareMathOperator{\Map}{Map}
\DeclareMathOperator{\TMF}{TMF}
\DeclareMathOperator{\Gal}{Gal}
\DeclareMathOperator{\End}{End}
\DeclareMathOperator{\Aut}{Aut}
\DeclareMathOperator{\Fr}{Fr}
\DeclareMathOperator{\ord}{ord}
\DeclareMathOperator*{\hocolim}{hocolim}
\DeclareMathOperator*{\colim}{colim}
\DeclareMathOperator*{\Tot}{Tot}
\title[Congruences given by the $\beta$ family]{Congruences between modular forms given by the divided $\beta$ family in
homotopy theory}
\address{
Department of Mathematics \\
Massachusetts Institute of Technology \\
Cambridge, MA 02139, U.S.A.}
\author[Mark Behrens]{Mark Behrens$\sp 1$}
\date{\today}
\begin{document}

\begin{abstract}
We characterize the $2$-line of the $p$-local Adams-Novikov spectral
sequence in terms of modular forms satisfying a certain explicit congruence
condition for primes $p \ge 5$.  
We give a similar characterization of the $1$-line,
reinterpreting some earlier work of A.~Baker and G.~Laures.  
These results are then used to
deduce that, for $\ell$ a prime which generates $\ZZ_p^\times$, the
spectrum $Q(\ell)$ detects the $\alpha$ and $\beta$ families in the stable
stems.
\end{abstract}

\maketitle

\footnotetext[1]{The author was supported by the NSF grant DMS-0605100, 
the Sloan Foundation, and DARPA.}

\tableofcontents
\bibliographystyle{amsalpha}

\section{Introduction}\label{sec:intro}

The Adams-Novikov spectral sequence 
$$ \Ext^{s,t}_{BP_*BP}(BP_*, BP_*) \Rightarrow (\pi^S_{t-s})_{(p)} $$
is one of the main tools for organizing
periodic phenomena in the $p$-local stable homotopy groups of spheres.
Assuming that $p$ is an odd prime, the $1$-line is generated by elements
$$ \alpha_{i/j} \in \Ext^{1,2(p-1)i-1}_{BP_*BP}(BP_*, BP_*) $$
of order $p^j$, for $i \ge 1$ and $j$ satisfying 
$$ j = \nu_p(i) + 1. $$
The elements $\alpha_{i/j}$ are all permanent cycles, and detect the
generators of the image of the $J$-homomorphism.
The image of $J$ admits a global description in terms of denominators of
Bernoulli numbers: there is a correspondence
$$ \alpha_{i/j} \leftrightarrow B_{t} $$
between the generator $\alpha_{i/j}$ and the $t^\mr{th}$ Bernoulli number for $t
= (p-1)i$.
The order $p^j$ of the element $\alpha_{i/j}$ is equal to the $p$-factor of
the denominator of the quotient
$$ \frac{B_{t}}{t}. $$
Thus the $1$-line of the Adams-Novikov spectral sequence is
governed by the $p$-adic valuations of the denominators of the Bernoulli
numbers.
The purpose of this paper is to provide a similar description for the
$2$-line of the Adams-Novikov spectral sequence, in terms of certain
congruences of modular forms.

Let 
$$ M_k(\Gamma_0(N)) $$
denote the space of weight $k$ modular forms for $\Gamma_0(N)$ defined
over $\ZZ$.  For a ring $R$, let
\begin{equation}\label{eq:Mdef} 
M_k(\Gamma_0(N))_R = M_k(\Gamma_0(N)) \otimes R
\end{equation}
be the corresponding space of modular forms defined over $R$.  
If $N = 1$, we shall simplify the notation:
$$ (M_k)_R := M_k(\Gamma_0(1))_R. $$
We shall sometimes work with modular forms which are simply meromorphic at
$\infty$, which we shall denote
$$ M_k(\Gamma_0(N))_R^0 = M_k(\Gamma_0(N))_R[\Delta^{-1}] $$
where $\Delta \in M_{12}$ is the discriminant.

\begin{rmk}
Implicit in our definition of the notation $M_k(\Gamma_0(N))_R$ given by
(\ref{eq:Mdef}) is a non-trivial base change theorem.  One typically
requires $N$ to be invertible in $R$, and then one
regards the modular forms for a ring $R$ as the sections of a certain line
bundle of the base-change of the moduli stack of elliptic curves to $R$.
In most instances considered in this paper, these two notions agree, see
\cite[1.7, 1.8]{Katz}.
\end{rmk}

The
$q$-expansion gives rise to an embedding
\begin{align*}
M_k(\Gamma_0(N)) & \hookrightarrow \ZZ[[q]] \\
f & \mapsto f(q)
\end{align*}
and consequently embeddings 
\begin{gather*}
M_k(\Gamma_0(N))_R \hookrightarrow R[[q]], \\
M_k(\Gamma_0(N))_R^0 \hookrightarrow R((q)).\\
\end{gather*}
Therefore, a modular form over $R$ is determined by its weight and its 
$q$-expansion.

For any $f \in (M_k)_R$, and any prime $\ell$, the power series
$$  (V_\ell f)(q) := f(q^\ell) $$
is the $q$-expansion of a modular form
$$ V_\ell f \in M_k(\Gamma_0(\ell))_R. $$

Suppose that $p$ is a prime greater than $3$. 
Miller, Ravenel, and Wilson showed that the $2$-line of the Adams-Novikov
spectral sequence
is generated by elements 
$$ \beta_{i/j,k} \in \Ext^{2,*}_{BP_*BP}(BP_*, BP_*) $$ 
for $i$, $j$, and $k$ satisfying
certain elaborate conditions (see Theorem~\ref{thm:MillerRavenelWilson}).  
Our main theorem is stated below.

\begin{thm}\label{thm:main}
For each additive generator 
$$ \beta_{i/j,k} \in \Ext^{2, *}_{BP_*BP}(BP_*, BP_*) $$ 
there is an associated modular form
$$ f_{i/j,k} \in M_{t} $$
(where $t = i(p^2-1)$)
satisfying:
\begin{enumerate}
\item The $q$-expansion $f_{i/j,k}(q)$ is not congruent to $0$ mod $p$.
\item We have $\ord_q f_{i/j,k}(q) > \frac{t-j(p-1)}{12}$ or $\ord_q
f_{i/j,k}(q) = \frac{t-j(p-1)-2}{12}$.
\item There does not exist a form 
$$ g \in M_{t'}, \qquad \text{for $t' < t$}, $$ 
satisfying
$$ f_{i/j,k}(q) \equiv g(q) \mod p^k. $$
\item For every prime $\ell \ne p$, there exists a form 
$$ g_\ell \in M_{t - j(p-1)}(\Gamma_0(\ell)) $$
satisfying 
$$ f_{i/j,k}(q^\ell) - f_{i/j,k}(q) \equiv  g_\ell(q) \mod p^k. $$
\end{enumerate}
\end{thm}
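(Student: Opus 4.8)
The plan is to transport the algebraic structure of the $\beta$ family across the chromatic machinery into the world of modular forms via the theory of topological modular forms, and then to read off the four congruence conditions from the $E_2$-term of a small descent spectral sequence.

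\textbf{Step 1: From $BP$ to elliptic cohomology.} First I would use the chromatic spectral sequence and the fact that each $\beta_{i/j,k}$ is detected on the $2$-line, i.e. lives in $H^2$ of the height-$2$ Morava stabilizer group (Miller--Ravenel--Wilson, Theorem~\ref{thm:MillerRavenelWilson}), to locate the class in the $K(2)$-local Adams--Novikov $E_2$-term. The image of $J$ analogy in the introduction is the template: there the $1$-line is governed by $H^1$ of the height-$1$ group, which is computed by $p$-adic $L$-values (denominators of $B_t/t$); here the relevant input should be $H^2$ of the height-$2$ group, which by the work relating $TMF$ to the Adams--Novikov spectral sequence is computed by a descent (elliptic) spectral sequence for the moduli stack of elliptic curves. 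So I would fix a supersingular elliptic curve over $\br{\FF}_p$, identify its formal group with the height-$2$ Honda formal group, and use the isomorphism between the height-$2$ cohomology and the cohomology of the automorphism group of that curve together with Serre--Tate theory.

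\textbf{Step 2: The $8$-cell / building complex resolution.} The key computational device — and this is where the four explicit conditions come from — is to resolve the $K(2)$-local sphere (or rather $E_2^{hG}$) by a finite sequence of spectra of the form $E_2^{h F}$ for finite subgroups $F$, which at the level of modular forms amounts to expressing $H^*$ of the moduli stack via a two-term (or short) complex of rings of modular forms. Concretely, $H^2$ of the moduli stack of elliptic curves at weight $t = i(p^2-1)$ appears as a quotient/cokernel of a map involving $M_t$, the forms meromorphic at $\infty$ (the role of $\Delta^{-1}$), and the forms for $\Gamma_0(\ell)$ via the operator $V_\ell$. Condition~(1) is the statement that the class is nonzero mod $p$; condition~(2) encodes which gap in the $q$-expansion filtration (the valuation stratification of the supersingular locus, cf.\ the exponents $\frac{t-j(p-1)}{12}$ and $\frac{t-j(p-1)-2}{12}$) the class sits in — these are exactly the dimensions/orders coming from the local structure at the supersingular point of the stack; condition~(3) is the minimality of $t$, i.e.\ that the class genuinely has order $p^k$ and is not pulled back from a lower weight (no $g \in M_{t'}$, $t' < t$, congruent mod $p^k$), which reflects that $\beta_{i/j,k}$ is an \emph{additive generator} and not a multiple of something in lower chromatic/weight filtration; and condition~(4) is the assertion that the class is annihilated by all the Hecke/Verschiebung-type operators $V_\ell - 1$ mod $p^k$, i.e.\ it is in the appropriate invariant part, which is precisely the descent condition over the stack (as opposed to over a modular curve).

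\textbf{Step 3: Assembling $f_{i/j,k}$ and checking the conditions.} Given $\beta_{i/j,k}$, I would take a cocycle representative in this modular-forms complex, clear denominators to land in $M_t$ (using that multiplication by a suitable power of a form raises weight without changing the reduction mod $p$, via the Eisenstein series $E_{p-1} \equiv 1$), and \emph{define} $f_{i/j,k}$ to be the resulting $q$-expansion. Conditions (1)--(4) then become: (1) the cocycle is nonzero mod $p$ (it represents a nonzero class); (2) a computation of the $q$-order coming from the explicit form of the height-$2$ cohomology (this is essentially the content of the Miller--Ravenel--Wilson numerology translated through the moduli stack); (3) the cocycle is not a coboundary mod $p^k$ — equivalently the order of the class is exactly $p^k$, which is part of the defining data of an additive generator; (4) the coboundary condition: $d(f_{i/j,k}) \equiv 0$, which unwinds to the existence of the $g_\ell \in M_{t-j(p-1)}(\Gamma_0(\ell))$ witnessing $f_{i/j,k}(q^\ell) - f_{i/j,k}(q) \equiv g_\ell(q) \bmod p^k$.

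\textbf{Main obstacle.} The hard part will be Step~2: making precise the dictionary between the $K(2)$-local $E_2$-term (height-$2$ Morava stabilizer cohomology, with its known but intricate structure from Shimomura--Yabe / Miller--Ravenel--Wilson) and an \emph{explicit} complex of spaces of classical modular forms, so that the abstract cohomology class $\beta_{i/j,k}$ really does produce a modular form with the stated $q$-expansion properties — in particular pinning down condition~(2), the exact $q$-order dichotomy, which requires a careful analysis of the filtration by order of vanishing at the supersingular point and its interaction with the weight shift $j(p-1)$ coming from the order $p^j$ of the class. The parallel with the Bernoulli-number description of the $1$-line (image of $J$) is the guiding analogy throughout, but the height-$2$ case has genuinely more structure (two "cells", the $V_\ell$ operators, the $\Delta^{-1}$), and reconciling all of it with integrality is the crux.
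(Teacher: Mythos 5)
Your plan follows the same general route as the paper (Morava stabilizer cohomology at height $2$ transported, via a supersingular curve and Serre--Tate theory, into a small complex of rings of modular forms with a $V_\ell$-type differential), but as written it has genuine gaps exactly at the load-bearing points, which you yourself flag as "the main obstacle" rather than resolve. First, the dictionary of your Step~2 is not a resolution by $E_2^{hF}$ for finite subgroups $F$, and condition~(2) does not come from a stratification of the supersingular locus. The actual bridge is the spectrum $Q(\ell)$, glued from $\TMF_p$ and $\TMF_0(\ell)_p$ along the building for $GL_2(\QQ_\ell)$, together with the identification of $M_2Q(\ell)$ with $\Gamma_\ell$-homotopy fixed points of $M_2E_2$ and, crucially, the Behrens--Lawson density theorem: the quasi-isogeny group $\Gamma_\ell = (\mc{O}_D[1/\ell])^\times$ of a supersingular curve is dense in $\MS_2$ when $\ell$ generates $\ZZ_p^\times$ (the automorphism group of the curve alone is finite and useless here; you need the $\ell$-power isogenies, and hence the hypothesis on $\ell$). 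Moreover, the $\beta_{i/j,k}$ are read off from $H^0$ of the second monochromatic layer --- i.e.\ the kernel of the first differential of $C^\bullet(\ell)/(p^\infty,v_1^\infty)$ --- not from an $H^2$/cokernel as your Step~2 suggests; the $q$-order dichotomy in condition~(2) then has an elementary source: one splits multiplication by $E_{p-1}^{j}$ on mod $p^k$ forms using the basis $\Delta^n e_{t-12n}$, so that the canonical lifts of the quotient are precisely the forms with $\ord_q > t'/12$ or $\ord_q = (t'-2)/12$, and condition~(3) is Serre's congruence theorem (same $q$-expansion mod $p^k$ forces weight congruence mod $(p-1)p^{k-1}$ and divisibility by a power of $E_{p-1}$), encoding that the class is not in the image from smaller $j$.

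Second, condition~(4) demands the congruence for \emph{every} prime $\ell \ne p$, while your construction (like the paper's) is pinned to a single $\ell$ generating $\ZZ_p^\times$; your Step~3 only produces the congruence for that one $\ell$, and nothing in the proposal explains why the same form $f_{i/j,k}$ works at other primes $\ell'$ (including primes that do not generate $\ZZ_p^\times$). The paper needs a separate rigidity argument: for a set $S$ containing $\ell$ and $\ell'$ the larger group $\Gamma_S$ is still dense in $\MS_2$, giving a zig-zag $H^0(\Gamma_\ell,-) \cong H^0(\Gamma_S,-) \hookrightarrow H^0(\Gamma_{\ell'},-)$, and then the $\ell'$-version of the modular-forms identification converts $\Gamma_{\ell'}$-invariance into the existence of $g_{\ell'} \in M_{t-j(p-1)}(\Gamma_0(\ell'))$. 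Without this step (or a substitute), the statement as given is not proved. Finally, the small bookkeeping point that $t = i(p^2-1) - j(p-1) \ge 0$ is what lets one pass from forms meromorphic at the cusp (where the whole computation naturally lives, with $\Delta^{-1}$ inverted) to the holomorphic forms asserted in the theorem; this also needs to be said.
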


The congruence conditions met by the forms $f_{i/j,k}$ are sharp; we have
the following converse theorem.

\begin{thm}\label{thm:converse}
Suppose there exists a modular form
$$ f \in M_{t} $$
satisfying Conditions \ref{thm:main}(1)-(4), where $t \equiv 0 \mod
(p-1)p^{k-1}$.  Then
$t = i(p^2-1)$ for some $i$, and 
if $i$ is not a power of $p$,  
there is a corresponding generator
$$ \beta_{i/j,k} \in \Ext^{2,*}_{BP_*BP}(BP_*, BP_*). $$
\end{thm}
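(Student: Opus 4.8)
The plan is to run the correspondence of Theorem~\ref{thm:main} backwards: from a form $f$ satisfying (1)--(4) I will extract a triple $(i,j,k)$, show that the weight must be $t=i(p^2-1)$, and check that $(i,j,k)$ falls in the range for which Miller--Ravenel--Wilson (Theorem~\ref{thm:MillerRavenelWilson}) furnish an additive generator $\beta_{i/j,k}$ of $\Ext^2_{BP_*BP}(BP_*,BP_*)$. The essential input is the dictionary underlying Theorem~\ref{thm:main}, which identifies the relevant subquotient of $\Ext^2$ with a cohomology group built out of modular forms over $\ZZ_p$ (with $\Gamma_0(\ell)$-level structure), under which $v_1$ corresponds to the Hasse invariant $E_{p-1}\in (M_{p-1})_{\FF_p}$ and $v_2$ to a form of weight $p^2-1$, and conditions (1)--(4) become exactly the conditions for $f$ to represent a nonzero class.

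First I would pin down the weight. By the dictionary, conditions (1)--(4) make $f$ into a nonzero element of the modular-forms model of $\Ext^2$, and the weights for which that model is nonzero are exactly $i(p^2-1)$ for $i\ge 1$; this forces $t=i(p^2-1)$ for a unique $i$. Feeding this into the hypothesis $(p-1)p^{k-1}\mid t$ and using $\gcd(p,p+1)=1$ gives $p^{k-1}\mid i$ — one of the admissibility constraints in Theorem~\ref{thm:MillerRavenelWilson}. Concretely, the same conclusion can be reached by reducing modulo $p^k$ and using the structure of the ring of mod-$p$ modular forms together with the identification of the reduction of $v_2$ with an explicit form of weight $p^2-1$: conditions (1) and (3) force $f \bmod p^k$ to be, up to a power of the Hasse invariant that conditions (2)--(3) then rule out via the Kummer-type congruences $E_{p-1}^{p^{k-1}}\equiv 1 \bmod p^k$, an $i$-th power of the $v_2$-representative.

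Next, assuming $t=i(p^2-1)$ with $i$ not a power of $p$, I would read off $j$ and $k$. The weight $t-j(p-1)$ of the auxiliary level-$\ell$ form $g_\ell$ in condition (4), together with the dichotomy for $\ord_q f$ in condition (2), identifies $j$ with the $v_1$-adic order of the class of $f$; conditions (3) and (4) identify $k$ with its $p$-adic order. Transporting through the model, $f$ then represents the image of $v_2^{\,i}/(p^k,v_1^{\,j})$ under the iterated boundary maps of the chromatic (equivalently, the $v_1$- and $p$-Bockstein) spectral sequences; conditions (2)--(4) are precisely what make this element well-defined and nonzero, so by Theorem~\ref{thm:MillerRavenelWilson} the triple $(i,j,k)$ is admissible and the associated generator $\beta_{i/j,k}$ exists, detected by $f$. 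The hypothesis excludes $i$ a power of $p$ because the structure of $\Ext^2$ is exceptional there — the admissible range of $j$ changes and $v_2^{p^n}$ couples to the $\alpha$-family — so that case is genuinely not governed by the same bookkeeping.

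The hard part will be the dictionary invoked above, and within it two points in particular. The first is converting the level-$\ell$ congruence $f(q^\ell)-f(q)\equiv g_\ell(q)\bmod p^k$ with $g_\ell$ of weight $t-j(p-1)$ into the statement that $v_1$ divides the class of $f$ exactly $j$ times; this is the modular-forms shadow of the $K(1)$-local ($\alpha$-family) analysis that reinterprets the work of Baker and Laures, and matching the weight drop $j(p-1)$ to the $v_1$-exponent on the nose requires care with the congruences $E_{p-1}^{p^{k-1}}\equiv 1 \bmod p^k$. The second is justifying the exact alternative in condition (2): the two possibilities for $\ord_q f$ reflect whether the class is carried holomorphically or through a simple pole at the supersingular point, and ruling out anything in between is what makes the congruence conditions \emph{sharp}. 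Granting these translations, the remaining verification that $(i,j,k)$ meets the other inequalities of Theorem~\ref{thm:MillerRavenelWilson} is routine.
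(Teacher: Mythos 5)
Your proposal follows the paper's own route: Theorem~\ref{thm:converse} is deduced by running the modular-forms dictionary underlying Theorem~\ref{thm:main} backwards and then quoting Miller--Ravenel--Wilson (Theorem~\ref{thm:MillerRavenelWilson}) for admissibility of $(i,j,k)$ and detection in $\Ext^{2}$, exactly as you outline; the ``dictionary'' you defer is precisely the paper's Theorem~\ref{thm:maindelta-1}, which is established via the complex $C^\bullet(\ell)$, the splitting by $E_{p-1}$-multiplication and Serre's congruence theorem, and the identification of $H^0(C(\ell)^\bullet/(p^\infty,v_1^\infty))$ with $H^0_c(\MS_2,\pi_*M_2E_2)^{\Gal(\FF_p)}$ using the density of $\Gamma_\ell$ in $\MS_2$. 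The only quibble is that $p^{k-1}\mid i$ is not literally one of the constraints of Theorem~\ref{thm:MillerRavenelWilson} (the stated constraint is $p^{k-1}\mid j$); it is only a derived necessary condition for an admissible $j$ to exist, and your use of it is harmless.
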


Finally, the congruence condition given in Theorem~\ref{thm:main} exhibits a
certain rigidity, as explained in the following theorem.

\begin{thm}\label{thm:rigid}
Suppose that $\ell_0$ is a prime which generates $\ZZ_p^\times$.  Then, if
$f$ is a modular form of weight $t \equiv 0 \mod (p-1)p^{k-1}$ 
satisfying Conditions~\ref{thm:main}(1)-(3),
and Condition \ref{thm:main}(4) for $\ell = \ell_0$, then $f$ satisfies
Condition \ref{thm:main}(4) for \emph{all} primes $\ell \ne p$.
\end{thm}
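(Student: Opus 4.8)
The plan is to reinterpret Condition~\ref{thm:main}(4), for a single prime $\ell$, as the value at $\ell$ of a condition that varies \emph{continuously} over $\ZZ_p^\times$, and then to exploit that $\ell_0$ topologically generates $\ZZ_p^\times$. The starting point is the Hecke relation $\ell^{t-1}V_\ell f = T_\ell f - U_\ell f$ for $\ell\neq p$: since $\ell$ is a unit mod $p^k$, Condition~\ref{thm:main}(4) for $\ell$ is equivalent to the assertion that $\ell^{1-t}\bigl(T_\ell f - U_\ell f\bigr) - f$ is congruent mod $p^k$ to a form of weight $t-j(p-1)$ on $\Gamma_0(\ell)$. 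Using the framework developed earlier in the paper, I would recast this as the case $u=[\ell]$ (the image of $\ell$ in $\ZZ_p^\times$) of a family of conditions $(\phi_u-1)\delta_f\in N$, $u\in\ZZ_p^\times$, in which $\delta_f$ is a class in a finite $\ZZ/p^k$-module $M$ of ($p$-adic, level-one) modular forms attached to $f$ --- this is where Conditions~\ref{thm:main}(1)-(3) are used, to guarantee that $\delta_f$ is defined and lands in the relevant summand --- where $N\subseteq M$ is the submodule of classes carried by forms of weight $\le t-j(p-1)$, and where $\{\phi_u\}_{u\in\ZZ_p^\times}$ is the continuous $\ZZ_p^\times$-action by Adams operations on the completed form of $TMF$ underlying $Q(\ell)$, whose value at a prime is the normalized Hecke/isogeny operator and under which $N$ is stable (because $E_{p-1}$ and its mod-$p^k$ refinements are Adams eigenvectors). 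Applying $\phi_{[\ell]}-1$ to a level-one class already builds in the passage to level $\Gamma_0(\ell)$, which is how the level appearing in Condition~(4) is accounted for.

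Granting this reformulation, the conclusion is formal. By hypothesis $(\phi_{[\ell_0]}-1)\delta_f\in N$. Since $\phi_{uv}=\phi_u\phi_v$ and $\phi_u N\subseteq N$ for all $u$, the telescoping identity
$$ \phi_{[\ell_0]}^{\,n}-1 \;=\; (\phi_{[\ell_0]}-1)\sum_{i=0}^{n-1}\phi_{[\ell_0]}^{\,i} $$
gives $(\phi_{[\ell_0^n]}-1)\delta_f\in N$ for every $n\ge 0$. Because $\ell_0$ generates $\ZZ_p^\times$, the set $\{[\ell_0^n]:n\ge 0\}$ is dense in $\ZZ_p^\times$; because $M$ is finite and $u\mapsto\phi_u$ is continuous, the set $\{u\in\ZZ_p^\times:(\phi_u-1)\delta_f\in N\}$ is closed, hence is all of $\ZZ_p^\times$. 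In particular it contains $[\ell]$ for every prime $\ell\neq p$, and unwinding the reformulation yields Condition~\ref{thm:main}(4) for $\ell$.

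The main obstacle is the reformulation in the first paragraph --- above all, replacing $V_\ell$, which genuinely depends on $\ell$ as an integer (the series $f(q^\ell)$ and $f(q^{\ell'})$ bear no mod-$p^k$ relation even when $\ell\equiv\ell'$), by a member $\phi_{[\ell]}$ of an honestly continuous $\ZZ_p^\times$-indexed family, together with the level-$\Gamma_0(\ell)$ bookkeeping. This is exactly the service the homotopy-theoretic model performs: in the descent spectral sequence computing $\pi_* Q(\ell)$, the obstruction to $\beta_{i/j,k}$ being detected by $Q(\ell)$ is a class in $\ZZ_p^\times$-cohomology of $p$-adic modular forms, the $\ZZ_p^\times$-action being by Adams operations (defined for every $p$-adic unit via the $u$-series of the universal deformation) whose restriction to a prime $\ell$ is the normalized $\ell$-Hecke operator, hence, on the relevant quotient, $(V_\ell-1)f$ up to the twist $\ell^{1-t}$. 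Once that dictionary is in place and $\delta_f$ has been pinned down from Conditions~(1)-(3), the multiplicativity of $\phi$, the stability of $N$, and the continuity invoked above are all routine, and the formal argument above finishes the proof. (An alternative would be to establish a uniqueness statement --- that Conditions~(1)-(3) and Condition~(4) at $\ell_0$ determine $f$ mod $p^k$ up to a $p$-adic unit and lower-order terms --- and then transport Condition~(4) for all primes from the form $f_{i/j,k}$ supplied by Theorem~\ref{thm:main}; but that uniqueness looks no easier than the reformulation above.)
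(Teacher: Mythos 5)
The gap is in the reformulation that your whole argument rests on: there is no continuous $\ZZ_p^\times$-indexed family of operations $\phi_u$ on a finite module of mod $p^k$ modular forms whose value at the class $[\ell]$ recovers the operator appearing in Condition~\ref{thm:main}(4). You identify the difficulty yourself --- $V_\ell$ depends on $\ell$ as an integer, and $f(q^\ell)$, $f(q^{\ell'})$ have no mod $p^k$ relation when $\ell\equiv\ell'$ --- but the appeal to ``Adams operations on the completed $\TMF$ underlying $Q(\ell)$'' does not repair this. The only copy of $\ZZ_p^\times$ acting in this setting is the central one (the scalar matrices, as in Proposition~\ref{prop:torsion}, or the center of $\MS_2$), and it acts on weight-$k$ forms simply by multiplication by $\ell^k$; the $V_\ell$/Hecke ingredient of Condition (4) comes from the $GL_2(\QQ_\ell)$-coset structure at the specific prime $\ell$ (the building for $GL_2(\QQ_\ell)$ in Section~\ref{sec:building}), which is attached to $\ell$ as a prime and admits no continuous interpolation over $\ZZ_p^\times$. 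So the set $\{u\in\ZZ_p^\times : (\phi_u-1)\delta_f\in N\}$ on which your closure/telescoping argument operates is not defined; and even granting some such family, Condition (4) at $\ell$ is not a function of $[\ell]\in\ZZ_p^\times$ alone, so the final ``unwinding'' at an arbitrary prime would not follow. The formal part of your argument (multiplicativity, stability of $N$, density of $\{\ell_0^n\}$, finiteness of $M$) is fine as abstract algebra, but it is attached to the wrong group: that is the height-one mechanism, which in this paper governs only the $\alpha$-family/Eisenstein part.

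What the paper actually uses is a height-two density statement about a non-abelian group. Via Corollary~\ref{cor:collapseisomorphism4} and Theorem~\ref{thm:maindelta-1}, a form of the given weight satisfying Conditions (1)--(3) and (4) at $\ell_0$ corresponds to an additive generator of $H^0(C(\ell_0)^\bullet/(p^\infty,v_1^\infty))_{2t}\cong H^0(\Gamma_{\ell_0},\pi_{2t}M_2E_2)^{\Gal(\FF_p)}$, where $\Gamma_{\ell_0}=(\mc{O}_D[\ell_0^{-1}])^\times$ is the group of quaternionic quasi-isogenies of a supersingular curve. The hypothesis that $\ell_0$ generates $\ZZ_p^\times$ enters through Theorem~\ref{thm:dense} (Behrens--Lawson): it makes $\Gamma_{\ell_0}$ dense in the Morava stabilizer group $\MS_2$, so $\Gamma_{\ell_0}$-invariants coincide with $\MS_2$-invariants, hence land inside the $\Gamma_{\ell'}$-invariants for every prime $\ell'\ne p$ (via the intermediate group $\Gamma_S$, $S=\{\ell_0,\ell'\}$; this is Corollary~\ref{cor:rigid}); translating back through Theorem~\ref{thm:maindelta-1} for $\ell'$ yields Condition (4) at $\ell'$. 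To fix your write-up you would need to replace the posited $\ZZ_p^\times$-action on mod $p^k$ modular forms by the action of these arithmetic subgroups of $\MS_2$ on $\pi_*M_2E_2$, together with the dictionary of Theorem~\ref{thm:maindelta-1} between their invariants and the congruence conditions --- which is essentially the paper's proof.
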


\begin{rmk}
In \cite{Laures}, G.~Laures introduced the $f$-invariant, a 
higher analog of the Adams
$e$-invariant, which gives an injection of the groups
$\Ext^{2,*}_{BP_*BP}(BP_*, BP_*)$ into groups closely related to Katz's
ring of divided congruences.  Laures' $f$-invariant 
associates to an element $\beta_{i/j,k}$, a higher congruence between
modular forms.  It is natural to ask what relation this congruence has to
the congruences given by condition~(4) of Theorem~\ref{thm:main}.
Unfortunately, the author is unable to determine the relationship between
these two approaches.  Theorem~\ref{thm:converse} does indicate that the
congruences we are studying give a precise description of the
$\beta$-family in terms of modular forms, whereas the $f$-invariant, while
injective, has a very small image in the ring of divided congruences.  The
image of the elements $\beta_i$ under the $f$-invariant, however, has been
characterized by J.~Hornbostel and N.~Naumann \cite{HornbostelNaumann}.
\end{rmk}

The proofs of Theorems~\ref{thm:main}-\ref{thm:rigid} 
use the spectrum $Q(\ell)$ introduced by the
author in \cite{Behrens}, \cite{Behrensmodular}.  Analyzing
the chromatic spectral
sequence 
$$ E_1^{s,t} = \pi_t M_s Q(\ell) \Rightarrow \pi_{t-s} Q(\ell), $$
we observe that a density
result \cite{BehrensLawsondense} relates part of the $2$-line of the chromatic
spectral sequence of $Q(\ell)$ to the
$2$-line of the chromatic spectral sequence for the sphere.  We also
analyze the $0$ and $1$-lines of the chromatic spectral sequence of
$Q(\ell)$, and find:
\begin{enumerate}
\item The $0$-line $\pi_t M_0Q(\ell)$ is concentrated in $t = 0, -1, -2$
(Corollary~\ref{cor:M0Ql}).

\item The $1$-line $\pi_t M_1Q(\ell)$ is generated in degrees congruent to
$0 \mod 4$ by the images of the elements $\alpha_{i/j} \in \pi_* M_1S$
(Corollary~\ref{cor:M1Ql}).
\end{enumerate}

In fact, the additive generators of $\pi_{4t}Q(\ell)$ are given by the
Eisenstein series $E_{2t} \in (M_{2t})_\QQ$ (\ref{prop:eisenstein}), 
and the orders of the groups
$\pi_{4t}M_1Q(\ell)$ are directly linked to the $p$-adic valuation of the
denominators of the Bernoulli numbers $\frac{B_{2t}}{4t}$ through the
appearance of the Bernoulli numbers in the $q$-expansions of the Eisenstein
series.  The relationship was originally made precise by G.~Laures
\cite{Lauresthesis}, and rederived by A.~Baker in \cite{Baker}, 
where Hecke operations are used 
to conclude that
Eisenstein series generate the $1$-line of the $\TMF$-Adams Novikov
spectral sequence.  Our analysis is closely related to these.

As a consequence of our study of the chromatic spectral sequence for
$Q(\ell)$ we are able to prove the following theorem.

\begin{thm*}[Theorem~\ref{thm:greek}]
The images of the elements $\alpha_{i/j}$ and the elements $\beta_{i/j,k}$ 
under the homomorphism
$$ \pi_* S_{E(2)} \rightarrow \pi_*Q(\ell) $$
are non-trivial.
\end{thm*}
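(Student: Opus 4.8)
\emph{Proof idea.}
The plan is to exploit the map of chromatic spectral sequences induced by the unit $S_{E(2)} \to Q(\ell)$,
\[ \bigl(\pi_t M_s S \Rightarrow \pi_{t-s}S_{E(2)}\bigr) \longrightarrow \bigl(\pi_t M_s Q(\ell) \Rightarrow \pi_{t-s}Q(\ell)\bigr), \]
which is compatible with the chromatic filtrations on $\pi_*S_{E(2)}$ and $\pi_*Q(\ell)$. Since $Q(\ell)$ is $E(2)$-local, both spectral sequences have only the columns $s = 0,1,2$, so a class in the $s=2$ column supports no differential, and a class in any column can only be hit by a differential originating in a strictly smaller column. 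As $\alpha_{i/j}$ has chromatic filtration $1$ and $\beta_{i/j,k}$ has chromatic filtration $2$, it suffices to prove that the images of $\alpha_{i/j}$ and $\beta_{i/j,k}$ in $E_1^{1}=\pi_*M_1Q(\ell)$ and $E_1^{2}=\pi_*M_2Q(\ell)$, respectively, are non-zero, are permanent cycles, and are not boundaries.

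For $\alpha_{i/j}$: by Corollary~\ref{cor:M1Ql} together with Proposition~\ref{prop:eisenstein}, in the relevant degree $2(p-1)i$ the group $\pi_*M_1Q(\ell)$ is cyclic and generated by the image of $\alpha_{i/j}$, and --- through the Bernoulli number $B_{(p-1)i}$ appearing in the $q$-expansion of the corresponding Eisenstein series, as recalled in the introduction --- it has order $p^{\nu_p(i)+1}=p^j>1$. Thus the image of $\alpha_{i/j}$ in $E_1^{1}$ is non-zero. It is a permanent cycle, being the image of the permanent cycle $\alpha_{i/j}$ (equivalently, its $d_1$ is the image of $d_1\alpha_{i/j}=0$). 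The only differential that could hit it is $d_1\colon\pi_tM_0Q(\ell)\to\pi_{t-1}M_1Q(\ell)$, but by Corollary~\ref{cor:M0Ql} the source is concentrated in $t=0,-1,-2$, so its image lies in negative degrees, whereas $2(p-1)i\ge 8$. Hence the image of $\alpha_{i/j}$ survives to a non-zero class on $E_\infty$ and is non-zero in $\pi_*Q(\ell)$. Note that this argument also shows $\pi_*M_1Q(\ell)$ is generated by $d_1$-cycles, so that $d_1\colon E_1^{1}\to E_1^{2}$ vanishes identically.

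For $\beta_{i/j,k}$: its image in $E_1^{2}=\pi_*M_2Q(\ell)$ is non-zero because, by the density theorem of Behrens--Lawson \cite{BehrensLawsondense} (using that $\ell$ topologically generates $\ZZ_p^\times$), the map $\pi_*M_2S\to\pi_*M_2Q(\ell)$ is injective on the subgroup spanned by the classes $\beta_{i/j,k}$. Such a class is automatically a permanent cycle since there is no $s=3$ column, and the only differentials that could hit it are $d_1\colon E_1^{1}\to E_1^{2}$, which vanishes by the previous paragraph, and $d_2\colon E_2^{0}\to E_2^{2}$, whose image again lies in negative degrees by Corollary~\ref{cor:M0Ql} while $\beta_{i/j,k}$ sits in a positive stem. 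Hence the image of $\beta_{i/j,k}$ survives to a non-zero class on $E_\infty$ and is non-zero in $\pi_*Q(\ell)$. The main obstacle is the invocation of \cite{BehrensLawsondense}: one must match the group-cohomological description of the $2$-line $\pi_*M_2S$ (cohomology of the Morava stabilizer group) with that of $\pi_*M_2Q(\ell)$ (cohomology of an arithmetic subgroup determined by $\ell$) finely enough to obtain injectivity on exactly the classes $\beta_{i/j,k}$; the rest of the argument is a degree count against the concentration statements of Corollaries~\ref{cor:M0Ql} and~\ref{cor:M1Ql}.
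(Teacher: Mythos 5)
Your overall strategy---the map of chromatic spectral sequences, non-vanishing on the $E_1$-page, then ruling out differentials using the concentration of $\pi_*M_0Q(\ell)$ and $\pi_*M_1Q(\ell)$---is the same as the paper's, but there is a genuine gap in the $\alpha$-family step. You assert that Corollary~\ref{cor:M1Ql} together with Proposition~\ref{prop:eisenstein} shows that $\pi_{2(p-1)i}M_1Q(\ell)$ is generated by the image of $\alpha_{i/j}$. Those results only compute the abstract group (cyclic of order $p^j$, generated by the Eisenstein class $E_t/p^j$); they say nothing about the map $\pi_tM_1S\to\pi_tM_1Q(\ell)$, which a priori could be zero or have a kernel even though source and target are abstractly isomorphic. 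Your Bernoulli-number remark computes the order of the target group, not the order of the image of $\alpha_{i/j}$. The paper closes exactly this point with Lemma~\ref{lem:e}: the unit of $Q(\ell)$ is factored through the elliptic spectrum $E$ of an ordinary point of the moduli stack, the quotient $E/x$ is identified with $p$-adic $K$-theory via the Serre--Tate canonical lift, and the injectivity of $\pi_{2t}M_1S\to\pi_{2t}M_1K_p$ (the $e$-invariant) then forces $\pi_tM_1S\to\pi_tM_1Q(\ell)$ to be injective, hence an isomorphism for $t\equiv 0\bmod 4$. Without some such argument, your claim that the $\alpha$'s map to generators is unsupported, and so is your further claim that $d_1\colon E_1^{1}\to E_1^{2}$ vanishes in the relevant degrees, which you reuse when protecting the $\beta$'s from differentials; the gap therefore propagates to the second half of your argument.

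For the $\beta$-family you have the right idea (density of $\Gamma_\ell$ in $\MS_2$ identifies invariants), but the step you yourself flag as the ``main obstacle'' is precisely what has to be supplied, and the paper supplies it with results you could have cited: Corollary~\ref{cor:collapseisomorphism1} identifies $\pi_tM_2S$ with $H^0_c(\MS_2,\pi_tM_2E_2)^{\Gal(\FF_p)}$ in the degrees $t=2i(p^2-1)-2j(p-1)$ (sparsity plus collapse of the Morava spectral sequence), Lemma~\ref{lem:key} uses density to identify this with $H^0(\Gamma_\ell,\pi_tM_2E_2)^{\Gal(\FF_p)}$, and Lemma~\ref{lem:collapseisomorphism3} identifies the latter with $\pi_tM_2Q(\ell)$; the composite $\pi_tM_2S\to\pi_tM_2Q(\ell)$ is therefore an isomorphism in these degrees, not merely injective on the span of the $\beta_{i/j,k}$. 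So the $\beta$ half of your proposal is repaired essentially by citation, whereas the $\alpha$ half genuinely needs the extra $K$-theoretic $e$-invariant input of Lemma~\ref{lem:e}, which no amount of comparing abstract group orders can replace.
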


This theorem shows that 
the homotopy of the spectrum $Q(\ell)$ is closely tied
to Greek letter phenomena.  It also gives credibility to the author's hope
that the following questions have affirmative answers.
\begin{enumerate}
\item Are the homotopy Greek letter elements $\beta^h_{i/j,k}$ (see
\cite{Behrensrootkin}) detected by the spectra $Q(\ell)$ at the primes
$2$ and $3$?
\item Do the spectra $Q_U(K^{p,\ell})$ (constructed using Shimura varieties
of type $U(1,n-1)$ in \cite{BehrensLawson}) detect the $v_n$-periodic Greek
letter elements?
\end{enumerate}

\noindent
{\bf Organization of the paper.}

In Section~\ref{sec:css} we summarize the chromatic spectral sequence.  We
also recall Morava's change of rings theorem, which relates the terms of
the chromatic spectral sequence to the cohomology of the Morava stabilizer
groups $\MS_n$.

In Section~\ref{sec:tmf} we explain how to associate a $p$-complete 
$\TMF$-spectrum to
every compact open subgroup of the adele group $GL_2(\AF^{p,\infty})$,
following standard conventions used in the theory of modular forms.
Certain $E_\infty$-operations between these spectra are given by elements
of $GL_2(\AF^{p,\infty})$.

In Section~\ref{sec:Q(S)} we explain how to use the $GL_2$ action of
Section~\ref{sec:tmf} to define spectra $Q(S)$ for a set of primes $S$.
These spectra agree with the spectra $Q(\ell)$ defined by the author in
\cite{Behrens}, \cite{Behrensmodular}
when $S = \{ \ell \}$.  The approach of this paper, however, mimics that of
\cite{BehrensLawson}.  We explain how the results of \cite{Behrens},
\cite{BehrensLawsondense} arise in this framework.  In particular, 
we identify the
$K(2)$-localization of $Q(S)$ as the homotopy fixed points of a dense
subgroup $\Gamma_S$ of the Morava stabilizer group $\MS_2$.

In Section~\ref{sec:building} we explain how the building resolution of
$Q(\ell)$ given in \cite{Behrens} can be recovered using the methods of
\cite{BehrensLawson}.  We use this resolution to define a finite cochain
complex $C^\bullet(\ell)$ of modular forms whose cohomology gives
$\pi_*Q(\ell)$.

In Section~\ref{sec:coface} we express the first differential in
$C^\bullet(\ell)$ in terms of the Verschiebung of modular forms.

In Section~\ref{sec:cssQl} we describe the chromatic spectral sequence of
$Q(\ell)$.  In particular, we show that its $E_1$-term consists of three
lines: $M_0Q(\ell)$, $M_1 Q(\ell)$, and $M_2 Q(\ell)$.  We explain how to
modify the chain complex $C^\bullet(\ell)$ to compute these terms. 
We also relate $M_2 Q(\ell)$ to the cohomology of the group $\Gamma_\ell$.

In Section~\ref{sec:M0Ql} we show that $\pi_t M_0 Q(\ell)$ is concentrated
in $t = 0, -1, -2$.  We also deduce that the rest of $\pi_t Q(\ell)$ is
$p$-torsion, and give bounds for the torsion.

In Section~\ref{sec:M1Ql} we compute $\pi_{4t} M_1 Q(\ell)$, and show that
its generators can be expressed as Eisenstein series.  The orders of these
groups are given by the $p$-adic valuation of the Bernoulli numbers
$B_{2t}/{4t}$.

In Section~\ref{sec:congruences} we recall theorems of Serre and
Swinnerton-Dyer, which
relate congruences amongst $q$-expansions of modular forms to
multiplication by the Hasse invariant $E_{p-1}$.

In Section~\ref{sec:M2Ql} we analyze $\pi_* M_2Q(\ell)$, and prove
Theorems~\ref{thm:main} -- \ref{thm:rigid}.

In Section~\ref{sec:greek} we deduce Theorem~\ref{thm:greek} from our
extensive knowledge of the chromatic spectral sequence for $Q(\ell)$.

\noindent
{\bf Acknowledgements}
This paper was prepared while the author visited Harvard University, and
the author is grateful for their hospitality.  The author is also grateful
to Niko Naumann for pointing out an error in an earlier draft concerning
the integrality of Eisenstein series.

\section{The chromatic spectral sequence}\label{sec:css}

Let $X$ be a spectrum.  The chromatic tower for $X$ is the tower of
Bousfield localizations with respect to the Johnson-Wilson spectra $E(n)$.
$$
\xymatrix{
M_0X \ar@{=}[d] & 
M_1X \ar[d] &
M_2X \ar[d] &
\\
X_{E(0)} &
X_{E(1)} \ar[l] &
X_{E(2)} \ar[l] &
\quad \ar[l] &
\cdots
}
$$
Here the fibers $M_nX$ are the $n$th monochromatic layers.  They admit a
presentation as
\begin{equation}\label{eq:Mncolimit}
M_nX \simeq \hocolim_I M(I)^0 \wedge X_{E(n)}
\end{equation}
where $M(I)^0 = M(i_0, \ldots, i_{n-1})^0$ is the generalized Moore
spectrum with top cell in dimension $0$ and $BP$-homology, 
$$ BP_* M(i_0, \ldots, i_{n-1}) = \Sigma^{-\norm{I}-n} BP_*/(p^{i_0}, v_1^{i_1}, \ldots,
v_{n-1}^{i_{n-1}}) $$
with
$$ \norm{I} := \sum_{j} 2i_j(p^j-1) $$
and $I$ ranges over a suitable
cofinal collection of multi-indices.  (The existence of such a system of
generalized Moore spectra is a consequence of the Hopkins-Smith 
Periodicity Theorem \cite{HopkinsSmith}.)

Applying homotopy to the chromatic tower yields the chromatic spectral sequence
$$ 
E_1^{n,k} = \pi_k M_n X \Rightarrow \pi_{k-n} X_{(p)}
$$
which is conditionally convergent if $X$ is harmonic.

Morava's change of rings theorem states that the Adams-Novikov spectral
sequence for $M_n X$ takes the form
\begin{equation}\label{eq:MnANSS}
H^s_c(\MS_n, (M_nE_n)_t(X))^{\Gal(\FF_p)} \Rightarrow \pi_{t-s}(M_n X).
\end{equation}

For $p \gg n$ this spectral sequence is known to collapse.  A simple
instance of this (for $X = S$) is given by the following lemma.

\begin{lem}
For $X = S$ and 
$2p-2 \ge \max \{ n^2, 2n+2 \} $, the spectral sequence (\ref{eq:MnANSS})
collapses:  the groups
$$
H^s_c(\MS_n, \pi_t M_nE_n)^{\Gal(\FF_p)}
$$
are zero unless $t \equiv 0 \mod 2(p-1)$. 
\end{lem}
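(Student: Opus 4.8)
The plan is to combine two standard facts about the $n$th Morava stabilizer group $\MS_n$: a sparseness property of its continuous cohomology coming from central torsion, and the finiteness of its cohomological dimension when $p$ is large relative to $n$. The first step is to show that the $E_2$-page of (\ref{eq:MnANSS}) is concentrated in internal degrees $t \equiv 0 \bmod 2(p-1)$; this is in fact the second assertion of the lemma, and it needs no hypothesis on $p$ beyond $p>3$. The center of $\MS_n$ is $\ZZ_p^\times$, which contains the subgroup $\mu_{p-1}$ of Teichm\"uller roots of unity; this subgroup is central and has order $p-1$, which is prime to $p$. Since $M_nE_n$ is a homotopy colimit of $p$-power torsion spectra by (\ref{eq:Mncolimit}), $\pi_t M_nE_n$ is a discrete $p$-torsion $\MS_n$-module, so the Lyndon--Hochschild--Serre spectral sequence for $\mu_{p-1}\triangleleft\MS_n$ collapses to natural isomorphisms $H^s_c(\MS_n,\pi_t M_nE_n)\cong H^s_c(\MS_n/\mu_{p-1},(\pi_t M_nE_n)^{\mu_{p-1}})$. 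By the standard description of the action of $\MS_n$ on $(E_n)_*$, the element $\zeta\in\mu_{p-1}$ acts on the weight $k$ piece $\pi_{2k}M_nE_n$ by the scalar $\zeta^{-k}$ (it acts trivially on $\pi_0E_n$ and by $\zeta$ on the periodicity generator $u\in\pi_{-2}E_n$, and $\pi_{2k}M_nE_n$ is $\pi_0M_nE_n$ multiplied by a power of $u$; the exact exponent is irrelevant here). Since $\zeta^{-k}-1$ is a $p$-adic unit whenever $(p-1)\nmid k$, these $\mu_{p-1}$-invariants vanish unless $(p-1)\mid k$, and passing to $\Gal(\FF_p)$-invariants is harmless because that group acts through $\Gal(\FF_{p^n}/\FF_p)$, whose order $n$ is prime to $p$ under our hypothesis. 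This proves the concentration statement.

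The second step is to bound $\mathrm{cd}_p(\MS_n)$. The hypothesis $2p-2\ge 2n+2$ says $p-1>n$, so $(p-1)\nmid n$, and hence $\QQ_p(\zeta_p)$, which has degree $p-1$ over $\QQ_p$, does not embed into the division algebra $D_n$; since every nontrivial finite $p$-subgroup of $\MS_n$ would produce such an embedding, $\MS_n$ contains no element of order $p$. Being a compact $p$-adic analytic group of dimension $n^2$ with no $p$-torsion, $\MS_n$ is then a Poincar\'e duality group of dimension $n^2$ over $\ZZ_p$, so $H^s_c(\MS_n,M)=0$ for all $s>n^2$ and all $p$-torsion modules $M$. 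Hence $E_2^{s,t}=0$ for $s>n^2$, and the spectral sequence is supported in the horizontal strip $0\le s\le n^2$.

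Finally, there is no room for differentials. The differential $d_r$ raises cohomological degree by $r$ and internal degree by $r-1$, so by the first step any potentially nonzero $d_r$ must satisfy $2(p-1)\mid r-1$, forcing $r\ge 2p-1$; by the second step the source and target of $d_r$ lie in cohomological degrees in $[0,n^2]$, forcing $r\le n^2$. But $2p-2\ge n^2$ gives $2p-1>n^2$, a contradiction, so every differential vanishes and (\ref{eq:MnANSS}) collapses at $E_2$. The one point that genuinely needs checking rather than citing is the identification of the $\mu_{p-1}$-action on $\pi_*M_nE_n$ in the first step, which follows from the presentation (\ref{eq:Mncolimit}) together with the standard computation of the $\MS_n$-action on Morava $E$-theory; the classification of $p$-torsion in $\MS_n$ and the value of $\mathrm{cd}_p(\MS_n)$ are well documented, so I anticipate no real obstacle.
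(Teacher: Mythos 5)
Your proof is correct and takes essentially the same route as the paper's: sparsity in $t$ comes from the central Teichm\"uller subgroup of order $p-1$ acting on $\pi_{2t}M_nE_n$ by a root-of-unity scalar that is nontrivial unless $(p-1)\mid t$, and the collapse then follows from the fact that $\MS_n$ has cohomological dimension $n^2\le 2p-2$ when $p-1>n$. The only cosmetic differences are that you obtain the vanishing via the Lyndon--Hochschild--Serre collapse onto $\mu_{p-1}$-invariants where the paper argues the central subgroup acts trivially on cohomology by restriction from $\MS_n$, and that you spell out the $p$-torsion-freeness and Lazard duality inputs that the paper cites.
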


\begin{proof}
The action of an element $a$ of 
the finite subgroup $\FF_{p}^\times \subset \MS_n$
on 
$$ \pi_{2t} M_n E_n \cong (\pi_{2t} E_n)/(p^\infty, v_1^\infty, \ldots,
v_{n-1}^\infty).
$$
is given by multiplication by $\td{a}^t$, where $\td{a}$ is the image of
$a$ under the Teichm\"uller embedding $\FF^\times_{p} \subset \ZZ_p^\times$.  
Since $\FF_p^\times$ is cyclic of order $p-1$, it follows that $\FF_{p-1}$
acts trivially if and only if $t \equiv 0 \mod p-1$.  Because the subgroup
$\FF_{p}^\times \subset \MS_n$ is central and Galois invariant, it follows
that there is an induced action of $\FF_p^\times$ on 
\begin{equation*}
H^s_c(\MS_n, \pi_{2t}M_n E_n)^{\Gal(\FF_p)}
\end{equation*}
However, the induced action on cohomology must be trivial, because the
action is obtained by restriction from the action of $\MS_n$.  Thus, the
cohomology groups must be trivial, except when $t \equiv 0 \mod p-1$.  The
result follows from the fact that if $n < p-1$, the group $\MS_n$ has
cohomological dimension $n^2$. 
\end{proof}

The sparsity of the spectral sequence (\ref{eq:MnANSS}), together with the
fact that $E_2^{s,t} = 0$ for $s \le n^2$ implies the
following corollary

\begin{cor}\label{cor:collapseisomorphism1}
For $2(p-1) \ge \max \{ n^2, 2(n+1) \} $, we have
$$ 
\pi_t M_n S \cong H^s_c(\MS_n, \pi_{t+s} M_n E_n)^{\Gal(\FF_p)}
$$
where $t = 2k(p-1) - s$ and $0 \le s < 2(p-1)$.
\end{cor}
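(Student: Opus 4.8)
The plan is to read the isomorphism directly off the Morava change-of-rings spectral sequence~(\ref{eq:MnANSS}) for $X = S$, using the preceding lemma together with the bound on the cohomological dimension of $\MS_n$ to force that spectral sequence to degenerate and to have at most one nonzero group in each total degree. Write~(\ref{eq:MnANSS}) as $E_2^{s,u} = H^s_c(\MS_n, \pi_u M_n E_n)^{\Gal(\FF_p)} \Rightarrow \pi_{u-s} M_n S$ and fix a total degree $t$; the groups contributing to $\pi_t M_n S$ are the $E_2^{s',u}$ with $u - s' = t$. By the preceding lemma such a group vanishes unless $u \equiv 0 \bmod 2(p-1)$. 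Writing $t = 2k(p-1) - s$ with $0 \le s < 2(p-1)$ as in the statement, the condition $u = t + s' \equiv 0 \bmod 2(p-1)$ becomes $s' \equiv s \bmod 2(p-1)$, so the cohomological degree of a surviving term lies in $\{\, s,\ s + 2(p-1),\ s + 4(p-1),\ \dots \,\}$.

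Next I would bring in the two quantitative inputs. Since $2(p-1) \ge 2(n+1)$ forces $n < p-1$, the group $\MS_n$ has cohomological dimension $n^2$, as recalled in the proof of the preceding lemma, so $E_2^{s',u} = 0$ whenever $s' > n^2$. Because $2(p-1) \ge n^2$ — and strictly so in the cases $n \le 2$ to which the corollary is applied below — every element of the set $\{\, s,\ s + 2(p-1),\ \dots \,\}$ other than $s$ itself exceeds $n^2$, so at most the single group $E_2^{s, t+s}$ is nonzero in total degree $t$. The same two facts also kill every differential: $d_r$ raises internal degree by $r - 1$, so a $d_r$ between two groups not already annihilated by the lemma must have $r \equiv 1 \bmod 2(p-1)$, i.e.\ $r \ge 2(p-1) + 1 > n^2$; but then the target of $d_r$ lies in cohomological degree $\ge r > n^2$ and so vanishes. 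Hence $E_2 = E_\infty$.

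To conclude, (\ref{eq:MnANSS}) is convergent, and in each total degree its $E_\infty$-page has at most one nonzero group; thus the filtration on $\pi_t M_n S$ has a single nonzero associated-graded piece and there is no extension problem, giving $\pi_t M_n S \cong E_\infty^{s,t+s} = E_2^{s,t+s} = H^s_c(\MS_n, \pi_{t+s} M_n E_n)^{\Gal(\FF_p)}$ with $0 \le s < 2(p-1)$. The only delicate point is the bookkeeping that singles out a unique surviving $(s',u)$ in each total degree and simultaneously obstructs the differentials — concretely, that the gap $2(p-1)$ between nonzero internal degrees dominates the cohomological dimension $n^2$ — and this is exactly what the hypothesis $2(p-1) \ge \max\{n^2, 2(n+1)\}$ is arranged to supply.
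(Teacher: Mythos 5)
Your argument is correct and is essentially the paper's own: the paper deduces the corollary in one line from the sparsity of (\ref{eq:MnANSS}) established in the lemma together with the vanishing of $H^s_c(\MS_n,-)$ for $s>n^2$, which is exactly the degeneration-and-single-filtration-piece argument you spell out (including the observation that any differential between nonvanishing groups would have $r\ge 2(p-1)+1>n^2$). Your aside about the borderline case $2(p-1)=n^2$ is a fair point of care, but it does not arise in the range $n\le 2$, $p\ge 5$ where the corollary is used.
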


\section{Adelic formulation of $\TMF$}\label{sec:tmf}

Let $\AF$ denote the rational adeles.  For a set of finite places $S$ of
$\QQ$, define
\begin{align*}
\widehat{\ZZ}^{S} & := \prod_{\ell \notin S} \ZZ_\ell, \\
\AF^{S,\infty} & := \widehat{\ZZ}^S \otimes \QQ.
\end{align*}
Fix a rank $2$ module:
$$ V^p := \AF^{p,\infty} \oplus \AF^{p,\infty}, $$
and let $L^p = \widehat{\ZZ}^p \oplus \widehat{\ZZ}^p$ be the canonical
lattice contained in $V^p$.

For an elliptic curve $C$ over an algebraically closed field $k$ of
characteristic unequal to $\ell$, let
$$ T_\ell(C) := \varprojlim_k C(k)[\ell^k] $$
denote the $\ell$-adic Tate module of $C$.  The Tate module $T_\ell(C)$ is
a free $\ZZ_\ell$-module of rank
$2$.  
If the characteristic of $k$
is zero or $p$, the $\ell$-adic Tate modules
assemble to give the $\AF^{p,\infty}$-module
$$ V^p(C) := T^p (C) \otimes \QQ, $$
where
$$ T^p(C) := \prod_{\ell \ne p} T_\ell(C). $$
There is a canonical short exact sequence
$$ T^p(C) \rightarrow V^p(C) \xrightarrow{u} C[tor^p] $$
where $C[tor^p]$ is the subgroup of the group of $k$-points of $C$
consisting of those points which are torsion of order prime to $p$.

A \emph{rational uniformization} is an isomorphism
$$ \eta : V^p \xrightarrow{\cong} V^p(C). $$
The group $GL_2(\AF^{p,\infty})$ acts on the set of rational
uniformizations by precomposition.

As explained in \cite[Sec.~3.2]{BehrensLawson}, 
a rational uniformization $\eta$ of $V^p(C)$ gives rise to a 
prime-to-$p$ quasi-isogeny
\begin{equation}\label{eq:quasiisogeny}
\phi_{\eta}: C \rightarrow C_\eta
\end{equation}
(up to isomorphism of $C_\eta$).
If the uniformization has the property that 
\begin{equation}\label{eq:Tpcontainment}
T^p(C) \subseteq \eta(L^p),
\end{equation}
the quasi-isogeny is an isogeny.  In this case,
the (isomorphism class of the) isogeny $\phi_\eta$ is characterized 
by its kernel $H_\eta$, which is given by:
$$ \ker(\phi) = H_\eta := 
\mr{image}(L^p \hookrightarrow V^p \xrightarrow{\eta} V^p(C) 
\xrightarrow{u} C[tor^p]). $$
(The case of more general $\eta$, not satisfying (\ref{eq:Tpcontainment}),
is easily generalized from this, producing quasi-isogenies 
$\phi_\eta$ which need not be isogenies.)

For a subgroup
$$ K^p \subset GL_2(\AF^{p,\infty}), $$
we let $[\eta]_{K^p}$ denote the $K^p$-orbit of rational uniformizations
generated by $\eta$.  The we shall refer to the orbit $[\eta]_{K^p}$ as an
\emph{$K^p$-level structure}.  If we define
$$ K^p_0 := GL_2(\widehat{\ZZ}^p) \subset GL_2(\AF^{p,\infty}), $$
then, given a rational uniformization
$$ \eta : V^p \xrightarrow{\cong} V^p(C), $$
the isomorphism class of the quasi-isogeny $\phi_\eta$ depends \emph{only}
on the $K^p_0$ level structure $[\eta]_{K^p_0}$.

If $C$ is an elliptic scheme over a connected base $S$, we can pick a
geometric point $s$ of $S$ and talk about level structures of the fiber
$C_s$, \emph{provided they are $\pi_1(S,s)$-invariant}.  
Given a $\pi_1(S,s)$-invariant 
$K^p_0$-level structure $[\eta]_{K^p_0}$ represented by a
rational uniformization
$$ \eta: V^p \xrightarrow{\eta} V^p(C_s), $$
(satisfying (\ref{eq:Tpcontainment}))
there is an associated subgroup
$$
H_{\eta,s} < C_s[tor^p].
$$
The $\pi_1(S,s)$-invariance of $[\eta]_{K^p_0}$ implies that $H_{\eta,s}$ 
extends to
a local system over $S$, giving a subgroup
$$ H_{\eta} < C, $$
and a corresponding isogeny
$$ \phi_\eta: C \rightarrow C/H_\eta =: C_\eta. $$
Extending this to $\eta$ not satisfying (\ref{eq:Tpcontainment}) associates
to a $\pi_1(S,s)$-invariant $K^p_0$-level structure $[\eta]_{K^p_0}$ of
$C_s$ an quasi-isogeny $\phi_\eta: C \rightarrow C_\eta$ 
of elliptic schemes over $S$. 

Associated to a compact open subgroup 
$$ K^p \subset GL_2(\AF^{p,\infty}) $$
is a Deligne-Mumford stack $\mc{M}(K^p)$ over $\ZZ_{(p)}$ 
of \emph{elliptic curves with
$K^p$-level structure}.  For a connected scheme $S$ over $\ZZ_{(p)}$ 
with a specified
geometric point $s$, the 
$S$-points of $\mc{M}(K^p)$ are the
groupoids whose objects are tuples
$$ (C, [\eta]_{K^p}) $$
where:
\begin{align*}
C & = \text{elliptic scheme over $S$,} \\
[\eta]_{K^p} & = \text{$\pi_1(S,s)$ invariant $K^p$-level structure on
$C_s$.}
\end{align*}
The morphisms of the groupoid of $S$-points of $\mc{M}(K^p)$ 
$$ \alpha: (C, [\eta]_{K^p}) \rightarrow (C', [\eta']_{K^p}) $$
are the
prime-to-$p$ quasi-isogenies
$$ \alpha: C \rightarrow C' $$
for which
$$ [\alpha_* \circ \eta]_{K^p} = [\eta']_{K^p}. $$

\begin{rmk}\label{rmk:ellipticrmks}
$\quad$
\begin{enumerate}
\item If the compact open subgroup is given by
$$ K^p_0 := GL_2(\widehat{\ZZ}^p) \subset GL_2(\AF^{p,\infty}) $$
then there is an
isomorphism
\begin{align*}
\mc{M}(K^p_0) & \xrightarrow{\cong} \mc{M}_{ell} \otimes \ZZ_{(p)} \\
(C, [\eta]_{K^p_0}) & \mapsto C_\eta
\end{align*}
where $\mc{M}_{ell}/\ZZ$ is the usual (uncompactified) 
moduli stack of elliptic curves.

\item If the compact open subgroup is given by 
$$ K^p_0(\ell) := GL_2(\widehat{\ZZ}^{p,\ell})K_0(\ell) \subset
GL_2(\AF^{p,\infty}), $$
where $K_0(\ell) \subset GL_2(\ZZ_\ell)$ is the subgroup of matrices given
by
$$ K_0(\ell) = \left\{ A \in GL_2(\ZZ_\ell) \: : \: 
A \equiv
\begin{bmatrix}
* & * \\
0 & *
\end{bmatrix}
\mod \ell
\right\},
$$
then there is an isomorphism
\begin{align*}
\mc{M}(K^p_0(\ell)) & \xrightarrow{\cong} 
\mc{M}(\Gamma_0(\ell)) \otimes_{\ZZ[1/\ell]} \ZZ_{(p)}, \\
(C, [\eta]_{K^p_0(\ell)}) & \mapsto (C_\eta, N_\eta)
\end{align*}
where $\mc{M}(\Gamma_0(\ell))$ is the moduli stack of elliptic curves with a
$\Gamma_0(\ell)$-structure, and $N_\eta$ is the $\Gamma_0(\ell)$-structure
(subgroup of order $\ell$) of $C$ associated to the image of the composite
$$ \ell^{-1} \ZZ_\ell \oplus \ZZ_\ell \rightarrow V^p \xrightarrow{\eta} 
V^p(C) \xrightarrow{(\phi_\eta)_*} V^p(C_\eta).
$$

\item If $K^p_1 < K^p_2$ is a pair of compact open subgroups, then there is
an induced \'etale cover of moduli stacks:
\begin{align*}
\mc{M}(K^p_1) & \rightarrow \mc{M}(K^p_2), \\
(C, [\eta]_{K^p_1}) & \mapsto (C, [\eta]_{K^p_2}).
\end{align*}
If $K^p_1$ is normal in $K^p_2$, the cover is a torsor for $K^p_2/K^p_1$.

\item An element $g \in GL_2(\AF^{p,\infty})$ gives rise to an isomorphism 
of stacks
\begin{align*}
g^* : \mc{M}(gK^pg^{-1}) & \rightarrow \mc{M}(K^p), \\
(C, [\eta]_{gK^pg^{-1}}) & \mapsto (C, [\eta \circ g]_{K^p}). 
\end{align*}
Clearly we have
\begin{equation}\label{eq:Kinvariance}
g^* = (gk)^* \quad \text{for $k \in K^p$.}
\end{equation}
\end{enumerate}

\end{rmk}

The moduli of $p$-divisible groups corresponding to the moduli space
$\mc{M}(K^p)$ satisfies Lurie's generalization of the Hopkins-Miller
theorem \cite[Sec.~8.1]{BehrensLawson}, and hence the $p$-completion
$\mc{M}(K^p)^\wedge_p$ carries a presheaf of $p$-complete 
$E_\infty$-ring spectra
$ \mc{E}_{K^p} $
on the site $(\mc{M}(K^p)^\wedge_p)_{et}$
such that:
\begin{enumerate}
\item The presheaf $\mc{E}_{K^p}$ satisfies homotopy hyperdescent (i.e. it
is Jardine fibrant).
\item For an affine \'etale open
$$ \mr{Spf}(R) \xrightarrow{(C, [\eta]_{K^p})} \mc{M}(K^p)^\wedge_p $$
the corresponding spectrum of sections 
$$ E = \mc{E}_{K^p}(\mr{Spf}(R)) $$ 
is a weakly
even periodic elliptic spectrum associated to the elliptic curve $C$ (i.e.
$\pi_0(E) = R$, and the formal group associated to $E$ is isomorphic to
the formal group of $C$).
\end{enumerate}
Define $\TMF(K^p)$ to be the global sections
$$ \TMF(K^p) := \mc{E}_{K^p}(\mc{M}(K^p)^\wedge_p). $$
In particular, we have
\begin{align*}
\TMF(K^p_0) & = \TMF_{p}, \\
\TMF(K^p_0(\ell)) & = \TMF_0(\ell)_{p}.
\end{align*}

By the functoriality of Lurie's theorem \cite[8.1.4]{BehrensLawson}, the action of
$GL_2(\AF^{p,\infty})$ described in Remark~\ref{rmk:ellipticrmks} gives
rise to maps of $E_\infty$-ring spectra
\begin{equation}\label{eq:GLops}
g_*: \TMF(K^p) \rightarrow \TMF(gK^pg^{-1}).
\end{equation}

\section{The spectra $Q(S)$}\label{sec:Q(S)}

The collection of compact open subgroups $K^p$ of $GL_2(\AF^{p,\infty})$
under inclusion forms a filtered category, and we may take the colimit
\begin{equation}\label{eq:Vdef}
\mc{V} := \colim_{K^p} \TMF(K^p).
\end{equation}
The action of $GL_2(\AF^{p,\infty})$ described above gives $\mc{V}$ the
structure of a \emph{smooth $GL_2(\AF^{p,\infty})$-spectrum}
\cite[10.3]{BehrensLawson}.  We may recover each of the spectra $\TMF(K^p)$ from
$\mc{V}$ by taking homotopy fixed points \cite[10.6.5]{BehrensLawson}:
$$ \TMF(K^p) \simeq \mc{V}^{hK^p}. $$

For a set of primes $S$ not containing $p$, we have an open subgroup
$$ (K^{p,S}_0)_+ := GL_2(\AF_{S})K^{p,S}_0 \subset
GL_2(\AF^{p,\infty}) $$
where $\AF_S = \prod'_{\ell \in S} \QQ_\ell$ is the ring of $S$-adeles and
$$ K_0^{p,S} = \prod_{\ell \not\in \{ p\} \cup S} GL(\ZZ_\ell). $$
We define a spectrum
$$ Q(S) = \mc{V}^{h(K^{p,S}_0)_+}. $$

The $K(2)$-localization of the spectrum $Q(S)$ 
is closely related to the $K(2)$-local sphere, as
we now explain.
Let $C_0$ be a fixed supersingular curve over $\br{\FF}_p$ 
(any two are isogenous).  Assume (for convenience) that $C_0$ is 
defined over $\FF_p$  (such a curve exists for every prime $p$
\cite{Waterhouse}).  The quasi-endomorphism ring
$$ D := \End^0(C_0) $$
is a quaternion algebra over $\QQ$ ramified at $p$ and $\infty$.  The
subring of actual endomorphisms
$$ \mc{O}_D := \End(C_0) \subset D $$
is a maximal order.  For our set of primes $S$, define a group
$$ \Gamma_S := (\mc{O}_D[S^{-1}])^\times. $$ 
The group $\Gamma_S$ is the group of quasi-isogenies of $C_0$ whose degree
lies in 
$$ \ZZ[S^{-1}]^\times \subset \QQ^\times. $$
The group $\Gamma_S$ embeds in the (profinite) 
Morava stabilizer group through its
action on the height $2$ formal group of $C_0$:
$$ \Gamma_S \hookrightarrow \Aut(\widehat{C}_0) \cong \MS_2. $$

\begin{thm}[Behrens-Lawson \cite{BehrensLawsondense}]\label{thm:dense}
If $p$ is odd, and $S$ contains a generator of $\ZZ_p^\times$, then the
subgroup
$$ \Gamma_S \hookrightarrow \MS_2 $$
is dense.
\end{thm}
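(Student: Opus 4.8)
The plan is to determine the closure $\overline{\Gamma_S}\subseteq\MS_2$ by treating separately the ``reduced norm'' (determinant) direction and the norm--$1$ (special) direction. By the supersingularity of $C_0$, $\End(\widehat{C}_0)=\mc{O}_D\otimes\ZZ_p$ is the maximal order of the quaternion division algebra $D_p:=D\otimes\QQ_p$ over $\QQ_p$, so $\MS_2=\Aut(\widehat{C}_0)=(\mc{O}_D\otimes\ZZ_p)^\times$, and the reduced norm $\mathrm{Nrd}$ (which on quasi-isogenies is the degree) fits into a short exact sequence of profinite groups
$$ 1 \longrightarrow \MS_2^1 \longrightarrow \MS_2 \xrightarrow{\ \mathrm{Nrd}\ } \ZZ_p^\times \longrightarrow 1, \qquad \MS_2^1 := SL_1(\mc{O}_D\otimes\ZZ_p). $$
I would prove: (a) the subgroup $\Gamma_S^1:=\ker(\mathrm{Nrd}|_{\Gamma_S})$ has dense image in $\MS_2^1$; and (b) $\mathrm{Nrd}(\Gamma_S)\ni\ell_0$, whence $\mathrm{Nrd}(\Gamma_S)$ is dense in $\ZZ_p^\times$ because $\ell_0$ topologically generates $\ZZ_p^\times$. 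Granting (a) and (b): $\overline{\Gamma_S}$ is a closed subgroup of the compact group $\MS_2$; it contains the closure of $\Gamma_S^1$, which by (a) is $\MS_2^1$; and $\mathrm{Nrd}(\overline{\Gamma_S})$, being the continuous image of a compact set, is a closed subgroup of $\ZZ_p^\times$ containing $\ell_0$, hence all of $\ZZ_p^\times$. A closed subgroup of $\MS_2$ containing $\ker(\mathrm{Nrd})$ and surjecting onto $\ZZ_p^\times$ is everything, so $\overline{\Gamma_S}=\MS_2$.

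For (a), observe that $\Gamma_S^1$ is exactly the set of $x\in SL_1(D)(\QQ)$ with $x\in SL_1(\mc{O}_D\otimes\ZZ_\ell)$ for every $\ell\notin S$ (for a norm--$1$ element, being integral at $\ell$ is equivalent to being a unit of the order there). The group $SL_1(D)$ is simply connected and absolutely almost simple, and it is $\QQ_{\ell_0}$-isotropic: since $\ell_0\ne p$ we have $D\otimes\QQ_{\ell_0}\cong M_2(\QQ_{\ell_0})$, so $SL_1(D)(\QQ_{\ell_0})\cong SL_2(\QQ_{\ell_0})$ is noncompact. By strong approximation (Kneser--Platonov) with respect to $\{\ell_0,\infty\}$, $SL_1(D)(\QQ)$ is dense in the restricted product $\prod'_{\ell\ne\ell_0}SL_1(D\otimes\QQ_\ell)$ over the finite primes $\ell\ne\ell_0$. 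Density passes to the open subgroup
$$ W := SL_1(\mc{O}_D\otimes\ZZ_p)\times\prod_{\ell\notin S\cup\{p\}}SL_1(\mc{O}_D\otimes\ZZ_\ell)\times\prod_{\ell\in S\setminus\{\ell_0\}}SL_1(D\otimes\QQ_\ell), $$
and $SL_1(D)(\QQ)\cap W=\Gamma_S^1$; projecting onto the first factor $SL_1(\mc{O}_D\otimes\ZZ_p)=\MS_2^1$ then gives (a).

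For (b), I must exhibit a quasi-isogeny of $C_0$ of degree exactly $\ell_0$, i.e. an element $\alpha\in\mc{O}_D[S^{-1}]^\times$ with $\mathrm{Nrd}(\alpha)=\ell_0$; this is where the hypothesis $\ell_0\in S$ genuinely enters, as the obvious element $[\ell_0]\in\Gamma_S$ has reduced norm $\ell_0^2$ and $\overline{\langle\ell_0^2\rangle}\subsetneq\ZZ_p^\times$ (since $p$ is odd). By the Hasse--Schilling norm theorem, $\mathrm{Nrd}\colon D^\times\to\QQ^\times$ has image $\QQ_{>0}$ (the only ramified real place being $\infty$), so fix $\alpha_0\in D^\times$ with $\mathrm{Nrd}(\alpha_0)=\ell_0$. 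For $\ell\notin S$ we have $v_\ell(\mathrm{Nrd}(\alpha_0))=0$: at $\ell=p$ this forces $\alpha_0\in(\mc{O}_D\otimes\ZZ_p)^\times$ already, while at the split primes $\ell\notin S\cup\{p\}$ the Cartan decomposition gives $\alpha_0=\kappa_\ell\delta_\ell$ with $\kappa_\ell\in GL_2(\ZZ_\ell)$ and $\delta_\ell\in SL_2(\QQ_\ell)$ (with $\delta_\ell=1$ for all but finitely many $\ell$). Applying strong approximation for $SL_1(D)$ once more (again using $\ell_0\in S$), choose $\delta\in SL_1(D)(\QQ)$ approximating $(\delta_\ell^{-1})_{\ell\notin S}$ closely enough that $\alpha:=\alpha_0\delta\in(\mc{O}_D\otimes\ZZ_\ell)^\times$ for every $\ell\notin S$; then $\alpha\in\Gamma_S$ and $\mathrm{Nrd}(\alpha)=\ell_0$. (Equivalently, (b) is an instance of Eichler's theorem on the reduced norms of $S$-arithmetic quaternion unit groups, valid because $D$ is split at the finite place $\ell_0\in S$.)

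The main obstacle is (b): step (a) is essentially a formal consequence of strong approximation for the simply connected group $SL_1(D)$ and uses only that $S\ne\emptyset$ (any $\ell_0\in S$ is automatically a split prime, since $p\notin S$), whereas (b) is precisely where the arithmetic hypothesis on $S$ is needed — one must produce a quasi-isogeny of the supersingular curve $C_0$ whose degree has odd $\ell_0$-adic valuation, and verify that it can be arranged to lie in $\Gamma_S$.
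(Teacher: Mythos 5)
This theorem is not proved in the paper at all: it is imported verbatim from Behrens--Lawson \cite{BehrensLawsondense}, so there is no internal argument to compare yours against. Your proposal is, as far as I can tell, a correct proof, and it follows the same standard strategy as the cited source: identify $\MS_2$ with $(\mc{O}_D\otimes\ZZ_p)^\times$, split the problem along the reduced norm sequence $1\to SL_1(\mc{O}_D\otimes\ZZ_p)\to \MS_2\to\ZZ_p^\times\to 1$, use strong approximation for the simply connected, $\QQ_{\ell_0}$-isotropic group $SL_1(D)$ (legitimate because $\ell_0\in S$ is a finite place $\ne p$, hence split, and $SL_1(D\otimes\RR)$ is compact) to get density of $\Gamma_S^1$ in the norm-one part, and then produce an element of $\Gamma_S$ of reduced norm exactly $\ell_0$ so that the image of the reduced norm topologically generates $\ZZ_p^\times$. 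Your step (b) is the genuinely arithmetic point, and your treatment of it (Hasse--Schilling to get $\alpha_0\in D^\times$ with $\mathrm{Nrd}(\alpha_0)=\ell_0>0$, then a second application of strong approximation to correct $\alpha_0$ by a global norm-one element so that it becomes a unit at every place outside $S$; equivalently Eichler's norm theorem for $S$-arithmetic unit groups) is sound; you correctly note that the central element $[\ell_0]$ alone only yields the squares, which is why an element of odd $\ell_0$-valuation of the norm is needed when $p$ is odd. Two points worth writing out if this were to be included: the identification of the degree of a quasi-isogeny with the reduced norm, and the assertion that $\delta_\ell=1$ for all but finitely many $\ell$ (i.e.\ that $\alpha_0$ is already a local unit almost everywhere); both are standard but are the kind of thing a referee would ask you to justify.
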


The universal deformation $\td{C}_0$ 
of the supersingular curve $C_0$, by Serre-Tate theory, 
gives Morava $E$-theory $E_2$ the structure of an
elliptic spectrum, where
$$ \pi_*E_2 = W(\br{\FF}_p)[[u_1]][u^{\pm 1}]. $$
Since $C_0$ is assumed to admit a definition over $\FF_p$, there is an
action of the Galois group $\Gal(\FF_p)$ on the spectrum $E_2$.
Picking a fixed rational uniformization
$$ \eta_0 : V^p \xrightarrow{\cong} V^p(C_0) $$
gives, for every $K^p$, a canonical map of $E_\infty$-ring spectra
$$ \TMF(K^p) \xrightarrow{(\td{C}_0, [\eta_0]_{K^p})} E_2 $$
classifying the pair $(\td{C}_0, [\eta_0]_{K^p})$, thus a map
\begin{equation}\label{eq:Vmap}
\mc{V} \xrightarrow{(\td{C}_0, \eta_0)} E_2.
\end{equation}
Using the Tate embedding
$$ \End^0(C_0) \hookrightarrow \End(V^p(C_0)), $$
the rational uniformization $\eta_0$ induces an inclusion
\begin{align*}
\gamma: \Gamma_S & \hookrightarrow (K^{p,S}_0)_+ \subseteq
GL_2(\AF^{p,\infty}), \\
\alpha & \mapsto \eta_0^{-1} \alpha \eta_0. 
\end{align*}

\begin{lem}\label{lem:equivariance}
$\quad$
\begin{enumerate}
\item
For $\alpha \in \Gamma_S$, the following diagram commutes
$$
\xymatrix@C+2em{
\TMF(K^p) \ar[r]^{(\td{C}_0, \eta_0)} \ar[d]_{\gamma(\alpha)_*} 
& E_2 \ar[d]^{\alpha_*} 
\\
\TMF(\gamma(\alpha)K^p \gamma(\alpha)^{-1}) \ar[r]_-{(\td{C}, \eta_0)}
& E_2
}
$$
where $\gamma(\alpha)_*$ is the morphism induced by the action of
$GL_2(\AF^{p,\infty})$ on $\TMF$, and $\alpha_*$ is the morphism induced by
the action of the Morava stabilizer group on $E_2$ through the inclusion
$\Gamma_S \subset \MS_2$.

\item
The map
$$ \TMF(K^p) \xrightarrow{(\td{C}_0, \eta_0)} E_2 $$
is invariant under the action of $\Gal(\FF_p)$ on $E_2$.
\end{enumerate}
\end{lem}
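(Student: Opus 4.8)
The plan is to derive both parts from the moduli-theoretic definition of the map $(\td C_0,\eta_0)$. By the functoriality of Lurie's theorem \cite[8.1.4]{BehrensLawson}, the map $\TMF(K^p)\xrightarrow{(\td C_0,\eta_0)}E_2$ is the effect on global sections of restricting the presheaf $\mc E_{K^p}$ along the map of formal stacks $u_{K^p}\colon\mr{Spf}(\pi_0E_2)\to\mc M(K^p)^\wedge_p$ that classifies the deformation $\td C_0$ of $C_0$ equipped with the level structure $[\eta_0]_{K^p}$ (the prime-to-$p$ Tate module of $\td C_0$ is canonically that of $C_0$, so $\eta_0$ makes sense over $\pi_0E_2$). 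Hence for (1) it is enough to check that the two composites around the square induce the same map $\mr{Spf}(\pi_0E_2)\to\mc M(K^p)^\wedge_p$, equivalently classify isomorphic objects of the groupoid $\mc M(K^p)(\pi_0E_2)$; and for (2) it is enough to check that $u_{K^p}$ is fixed by the $\Gal(\FF_p)$-action, equivalently that the pair $(\td C_0,[\eta_0]_{K^p})$ descends to $W(\FF_p)[[u_1]]$.

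For (1): Around the bottom and left, $\gamma(\alpha)_*$ on $\TMF$ is induced by the stack isomorphism $\gamma(\alpha)^*$ of Remark~\ref{rmk:ellipticrmks}(4), so the composite corresponds to the formal-stack map $\gamma(\alpha)^*\circ u_{\gamma(\alpha)K^p\gamma(\alpha)^{-1}}$, which classifies $(\td C_0,[\eta_0\circ\gamma(\alpha)]_{K^p})$. Since $\gamma(\alpha)=\eta_0^{-1}\alpha\eta_0$ (with $\alpha$ acting on $V^p(C_0)$ through the Tate embedding), this pair is $(\td C_0,[\alpha\circ\eta_0]_{K^p})$. Around the top and right, $\alpha_*\colon E_2\to E_2$ is the action of $\alpha$ through $\Gamma_S\hookrightarrow\MS_2=\Aut(\widehat C_0)$, whose underlying self-map $\psi_\alpha$ of $\mr{Spf}(\pi_0E_2)$ re-marks the universal deformation of $\widehat C_0$ by $\widehat\alpha$; by Serre--Tate theory $\psi_\alpha^*\td C_0$ is again a deformation of $C_0$, equipped with a canonical quasi-isogeny from $\td C_0$ lifting $\alpha\colon C_0\to C_0$, and the prime-to-$p$ uniformization $\eta_0$ is carried to itself by $\psi_\alpha$. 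So the top-right composite classifies $(\psi_\alpha^*\td C_0,[\eta_0]_{K^p})$, and this canonical quasi-isogeny --- which on prime-to-$p$ Tate modules is identified with $\alpha$ --- is an isomorphism in $\mc M(K^p)(\pi_0E_2)$ between the two pairs, with the conventions of \cite[Sec.~3.2, 8.1.4]{BehrensLawson}. Hence the square commutes.

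For (2): The universal deformation $\td C_0$ is defined over $W(\FF_p)[[u_1]]$ because $C_0$ is defined over $\FF_p$, and the $\Gal(\FF_p)$-action on $E_2$ is precisely base change along $W(\FF_p)[[u_1]]\hookrightarrow W(\br\FF_p)[[u_1]]=\pi_0E_2$; so what must be checked is that $[\eta_0]_{K^p}$ is $\Gal(\FF_p)$-invariant. The Galois group acts on $V^p(C_0)$ through the Frobenius endomorphism $\pi\in\mc O_D$ (the $p$-power Frobenius on prime-to-$p$ torsion is induced by the relative Frobenius isogeny, which lies in $\End(C_0)$ since $C_0$ is defined over $\FF_p$), and $\pi$ has degree $p$, so $\pi$ acts as an automorphism of the lattice $T^p(C_0)$. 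Taking $\eta_0$ with $\eta_0(L^p)=T^p(C_0)$ gives $\gamma(\pi)=\eta_0^{-1}\pi\eta_0\in GL_2(\widehat\ZZ^p)$, and then the Frobenius fixes $[\eta_0]_{K^p}$ as soon as $\gamma(\pi)\in K^p$; this holds for the level subgroups $K^p_0$ and $(K^{p,S}_0)_+$ that occur here, proving (2).

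The step I expect to be the main obstacle is the identification in (1) of the topological $\MS_2$-action on $E_2$, restricted to $\Gamma_S$, with the geometric action that lifts quasi-isogenies of $C_0$ via Serre--Tate theory --- i.e. the compatibility between the Goerss--Hopkins--Miller/Lurie action and deformation theory, mediated by the Tate embedding $\End^0(C_0)\hookrightarrow\End(V^p(C_0))$. Getting the variances right (which way markings and quasi-isogenies are twisted) is the delicate part; once this compatibility is invoked from \cite[Sec.~3.2, 8.1.4]{BehrensLawson}, both parts reduce to formal manipulations with rational uniformizations.
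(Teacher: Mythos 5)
Your proposal is correct and takes essentially the same route as the paper: the paper also reduces, via the functoriality of the Goerss--Hopkins--Miller/Lurie sheaf, to a commuting square of formal moduli, writing the $\Gamma_S$-action on $\mr{Def}(C_0)$ as re-marking $(C,\iota,\beta)\mapsto(C,\iota,\beta\circ\alpha)$ (compatible with the $\MS_2$-action by Serre--Tate) and checking on $R$-points that both composites classify the same pair, using exactly your identity $\eta_0\circ\gamma(\alpha)=\alpha\circ\eta_0$ --- your version phrases the comparison as an isomorphism in the groupoid $\mc{M}(K^p)(\pi_0E_2)$ realized by the lifted quasi-isogeny, and the variance check you defer to the conventions of \cite[Sec.~3.2, 8.1.4]{BehrensLawson} is precisely what the paper's ``easily checked on $R$-points'' carries out. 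For part (2) the paper offers no details (``checked in a similar manner''), and your explicit argument via the Frobenius endomorphism $\pi\in\mc{O}_D$ acting as an automorphism of $T^p(C_0)$, with $\eta_0$ chosen so that $\gamma(\pi)\in K^p$ for the subgroups actually used, is a reasonable rendering of what that check amounts to.
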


\begin{proof}
Let $\mr{Def}(C_0)$ denote the formal moduli of deformations of $C_0$.  
For a complete local ring $(R, \mathfrak{m})$, 
the $R$-points of $\mr{Def}(C_0)$ is the
category of tuples 
$$ (C, \iota, \beta), $$
where 
\begin{align*}
C & = \text{elliptic curve over $R$, with reduction $\br{C}$ mod 
$\mathfrak{m}$,} \\
\iota & : \br{\FF}_p \rightarrow R/\mathfrak{m}, \\ 
\beta & : \iota^*C_0 \xrightarrow{\cong} \br{C}.
\end{align*}
The element $\alpha \in \Gamma_S$ acts on the $R$-points of $\mr{Def}(C_0)$ by
$$ \alpha^*: (C, \iota, \beta) \mapsto (C, \iota, \beta \circ \alpha). $$
By
Serre-Tate theory, this space is equivalent to the formal moduli
$\mr{Def}(\widehat{C}_0)$ of
deformations of the associated height $2$ formal group $\widehat{C}_0$, and
the action of $\Gamma_S$ on $\mr{Def}(C_0)$ is compatible with the action
of $\MS_2$ on $\mr{Def}(\widehat{C}_0)$.
Part (1) of the lemma follows from the commutativity of the following
diagram, which is easily checked on $R$-points.
$$
\xymatrix@C+5em{
\mc{M}(K^p) & 
\mr{Def}(C_0) \ar[l]_{(C_{\mr{univ}}, [(\beta_{\mr{univ}})_* \eta_0])} 
\\
\mc{M}(\gamma(\alpha)K^p\gamma(\alpha)^{-1}) \ar[u]^{\gamma(\alpha)^*} 
& \mr{Def}(C_0) \ar[u]_{\alpha^*} \ar[l]^-{(C_{\mr{univ}}, 
[(\beta_{\mr{univ}})_* \eta_0 ])}
}
$$
Part (2) is checked in a similar manner.
\end{proof}

Lemma~\ref{lem:equivariance} implies that the morphism (\ref{eq:Vmap})
descends to give a morphism
\begin{equation}\label{eq:map}
Q(S) = \mc{V}^{h(K^{p,S})_+} \rightarrow \left( E_2^{h\Gamma_S}
\right)^{h\Fr_p} =: E(\Gamma_S). 
\end{equation}
Here, if $X$ is a spectrum with an action of the Frobenius $\Fr_p \in
\Gal(\FF_p)$, the spectrum $X^{h\Fr_p}$ is defined to be the homotopy fiber
$$ X^{h\Fr_p} \rightarrow X \xrightarrow{\Fr_p - 1} X. $$

The following theorem is proved in \cite{Behrens} in the case where $S$
consists of one prime.  The proof of the more general case is identical to
the proof of Corollary~14.5.6 of \cite{BehrensLawson}.

\begin{thm}\label{thm:K2Ql}
The map (\ref{eq:map}) induces an equivalence
$$ Q(S)_{K(2)} \rightarrow E(\Gamma_S). $$
\end{thm}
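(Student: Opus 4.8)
The plan is to identify both sides of the map \eqref{eq:map} as homotopy fixed point spectra for an action of $\Gamma_S$ on Morava $E$-theory, and then invoke the density result Theorem~\ref{thm:dense} together with the formalism of smooth $G$-spectra. First I would $K(2)$-localize everything in sight. Since $\mc{V} = \colim_{K^p} \TMF(K^p)$ and each $\TMF(K^p)$ is built (via the Hopkins-Miller-Lurie sheaf $\mc{E}_{K^p}$) from elliptic spectra, the $K(2)$-localization only sees the supersingular locus of $\mc{M}(K^p)^\wedge_p$. The supersingular locus is a single point with automorphism group controlled by $\mc{O}_D^\times$, and the étale-local structure there is precisely the Lubin-Tate deformation space of $\widehat{C}_0$. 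Concretely, one shows $(\mc{V}_{K(2)})$ is equivalent to $E_2$ with its $\MS_2 \rtimes \Gal(\FF_p)$-action, but now regarded as a \emph{smooth} $GL_2(\AF^{p,\infty})$-spectrum via the homomorphism $GL_2(\AF^{p,\infty}) \to \MS_2$ coming from $\eta_0$ and the Tate embedding — this is the content of Lemma~\ref{lem:equivariance}, which tells us the map $(\td C_0,\eta_0)\colon \mc{V}\to E_2$ is equivariant for $\gamma\colon \Gamma_S \hookrightarrow (K^{p,S}_0)_+$ and Galois-invariant.

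Next I would take homotopy fixed points. By definition $Q(S) = \mc{V}^{h(K^{p,S}_0)_+}$, and $K(2)$-localization commutes with the relevant homotopy fixed points here (the indexing group is compact-open times $GL_2(\AF_S)$, and the smooth $G$-spectrum formalism of \cite[Sec.~10]{BehrensLawson} makes $(-)^{hK}$ behave well with respect to the $K(2)$-local filtration; alternatively one checks that $\mc{V}_{K(2)}^{h(K^{p,S}_0)_+} \simeq (\mc{V}^{h(K^{p,S}_0)_+})_{K(2)}$ because the fixed-point spectral sequence converges and localization is smashing on its $E_2$-page). So $Q(S)_{K(2)} \simeq (\mc{V}_{K(2)})^{h(K^{p,S}_0)_+}$. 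Now the key input: the action of $(K^{p,S}_0)_+$ on $\mc{V}_{K(2)} \simeq E_2$ factors through the map to $\MS_2 \rtimes \Gal(\FF_p)$, and the image of $(K^{p,S}_0)_+$ under this map is exactly (the closure of) $\Gamma_S$ — this is where one uses that $\Gamma_S = (\mc{O}_D[S^{-1}])^\times$ is the group of prime-to-$p$-up-to-$S$ quasi-isogenies of $C_0$, matching the $S$-adelic points. Taking fixed points for a dense subgroup agrees with taking fixed points for its closure (for the continuous/smooth action on $E_2$), so $(\mc{V}_{K(2)})^{h(K^{p,S}_0)_+} \simeq E_2^{h\Gamma_S \rtimes \Fr_p} = (E_2^{h\Gamma_S})^{h\Fr_p} = E(\Gamma_S)$, and one checks this identification is induced by \eqref{eq:map}.

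The main obstacle I expect is the bookkeeping around homotopy fixed points for the noncompact group $(K^{p,S}_0)_+ = GL_2(\AF_S) K^{p,S}_0$: one must be careful that "homotopy fixed points" for this group in the smooth $GL_2(\AF^{p,\infty})$-spectrum sense really does compute what one wants, and that it matches the $\Gamma_S$-homotopy fixed points of $E_2$ after $K(2)$-localization — the group $GL_2(\AF_S)$ is not profinite, and $\Gamma_S$ sits inside it as a discrete (arithmetic) subgroup via strong approximation, so one needs a comparison between continuous cohomology of the adelic group and group cohomology of the arithmetic group. This is precisely the step worked out in \cite[Sec.~14]{BehrensLawson} in the Shimura-variety setting, so rather than redo it I would cite Corollary~14.5.6 of \cite{BehrensLawson}: the argument there is structural and applies verbatim once one knows (Theorem~\ref{thm:dense}) that $\Gamma_S$ is dense in $\MS_2$, which is what guarantees that passing from the adelic homotopy fixed points to $E_2^{h\Gamma_S}$ loses no information. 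The remaining verifications — that the Galois/$\Fr_p$ factor is handled correctly via the fiber sequence $X^{h\Fr_p} \to X \xrightarrow{\Fr_p - 1} X$, and that the composite equivalence is the map \eqref{eq:map} and not merely some abstract equivalence — are routine diagram chases using Lemma~\ref{lem:equivariance}.
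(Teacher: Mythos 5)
Your bottom line coincides with the paper's: the paper gives no independent argument for Theorem~\ref{thm:K2Ql}, but simply records that the case $S=\{\ell\}$ is proved in \cite{Behrens} and that the general case is identical to the proof of Corollary~14.5.6 of \cite{BehrensLawson}. Since you too ultimately delegate the substance to that corollary, your proposal lands on the same proof as the paper.

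However, the heuristic bridge you build before invoking the citation contains a genuine error, and it is the load-bearing step of your ``direct'' sketch: you assert that homotopy fixed points for the dense subgroup $\Gamma_S \subset \MS_2$ agree with homotopy fixed points for its closure, and you present Theorem~\ref{thm:dense} as ``what guarantees that passing from the adelic homotopy fixed points to $E_2^{h\Gamma_S}$ loses no information.'' This principle is false, and if it were true it would trivialize the construction: it would give $Q(S)_{K(2)} \simeq (E_2^{h\MS_2})^{h\Gal(\FF_p)} \simeq S_{K(2)}$, whereas $E(\Gamma_S)=(E_2^{h\Gamma_S})^{h\Fr_p}$ (fixed points for the \emph{discrete} arithmetic group) is emphatically not the $K(2)$-local sphere --- that distinction is the whole point of $Q(\ell)$ and its finite resolution. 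Density is not an input to Theorem~\ref{thm:K2Ql} at all; in the paper it enters only later, e.g.\ in Lemma~\ref{lem:key}, where it identifies $H^0$-invariants of $\MS_2$ and $\Gamma_\ell$ on $\pi_*M_2E_2$ (degree-zero invariants are insensitive to passing to a dense subgroup, but higher cohomology and homotopy fixed-point spectra are not). The actual mechanism in \cite{Behrens} and \cite[Sec.~14]{BehrensLawson} is different: $K(2)$-localization of $\mc{V}$ is governed by the supersingular locus, which is a finite set of points all isogenous to $C_0$, and the adelic uniformization of that locus by prime-to-$p$ quasi-isogenies from $C_0$ (Serre--Tate theory plus the strong-approximation-style bookkeeping you correctly flag as the hard part) is what converts the $(K^{p,S}_0)_+$-fixed points into $(E_2^{h\Gamma_S})^{h\Fr_p}$, with Lemma~\ref{lem:equivariance} ensuring the identification is induced by the map (\ref{eq:map}). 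So: the citation you end with is exactly the paper's proof, but the density-based justification preceding it should be deleted, since as stated it proves the wrong theorem.
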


\section{The building resolution}\label{sec:building}

If $S = \{\ell\}$ is a set containing one prime, the spectrum $Q(\ell)$
defined in Section~\ref{sec:Q(S)} is equivalent to the spectrum 
constructed in \cite{Behrens}.
In \cite{Behrens}, the spectrum $Q(\ell)$ was defined to be the
totalization of a certain semi-cosimplicial $E_\infty$-ring spectrum.  This
description is recovered as follows.

The group $GL_2(\QQ_\ell)$ acts on its building
$\mc{B} = \mc{B}(GL_2(\QQ_\ell))$ with compact open stabilizers.  
Explicitly, the building $\mc{B}$ is equivariantly
homeomorphic to the geometric realization of a semi-simplicial
$GL_2(\QQ_\ell)$-set $\mc{B}_\bullet$ of the form
\begin{equation}\label{eq:BGLQl}
\mc{B}_\bullet = 
\left(
GL_2(\QQ_\ell)/GL_2(\ZZ_\ell)
\begin{array}{c}
\leftarrow \\
\leftarrow
\end{array}
\begin{array}{c}
GL_2(\QQ_\ell)/K_0(\ell) \\
\times \\
GL_2(\QQ_\ell)/GL_2(\ZZ_\ell)
\end{array}
\begin{array}{c}
\leftarrow \\
\leftarrow \\
\leftarrow
\end{array}
GL_2(\QQ_\ell)/K_0(\ell).
\right)
\end{equation}
The action of $GL_2(\QQ_\ell)$ on the building $\mc{B}$ extends to an
action of $(K^{p,\ell}_0)_+$, simply by letting the local factors away
from $\ell$ act trivially.  Regarded as a semi-simplicial
$(K^{p,\ell}_0)_+$-set, we have 
\begin{equation}\label{eq:BGLAF}
\mc{B}_\bullet = \left(
(K^{p,\ell}_0)_+/K^p_0
\begin{array}{c}
\leftarrow \\
\leftarrow
\end{array}
\begin{array}{c}
(K^{p,\ell}_0)_+/K^p_0(\ell) \\
\times \\
(K^{p,\ell}_0)_+/K^p_0
\end{array}
\begin{array}{c}
\leftarrow \\
\leftarrow \\
\leftarrow
\end{array}
(K^{p,\ell}_0)_+/K^p_0(\ell)
\right).
\end{equation}

The canonical $(K^{p,\ell}_0)_+$-equivariant
morphism
\begin{equation}\label{eq:cmorphism}
\mc{V} \xrightarrow{c} \ul{\Map}(\mc{B}, 
\mc{V})^{sm} 
\end{equation}
(given by the inclusion of the constant functions) is an equivalence
\cite[Lem.~13.2.3]{BehrensLawson}.  Here, $\ul{\Map}(-,-)^{sm}$ is defined
to be the colimit of the $U$-fixed point spectra, as $U$ ranges over the
open subgroups of $GL_2(\AF^{p,\infty})$.  The argument in
\cite{BehrensLawson} relies on the fact that the building $\mc{B}$ is not
only contractible, but possesses a canonical contracting homotopy with
excellent equivariance properties.

The semi-simplicial decomposition of $\mc{B}$ induces an equivariant equivalence
$$ \ul{\Map}(\mc{B}, \mc{V})^{sm} \simeq \Tot \ul{\Map}(\mc{B}_\bullet, \mc{V})^{sm}
$$
and therefore an equivalence on fixed point spectra:
\begin{align*}
Q(\ell) & = \mc{V}^{h(K^{p,\ell}_0)_+} \\
& \simeq \Tot \left( \ul{\Map}(\mc{B}_\bullet, \mc{V})^{sm}
\right)^{h(K^{p,\ell}_0)_+}.
\end{align*}
Using Shapiro's lemma in the context of smooth
$(K^{p,\ell}_0)_+$-spectra gives an equivalence
$$ (\ul{\Map}((K^{p,\ell}_0)_+/K^p, \mc{V})^{sm})^{h(K^{p,\ell}_0)_+}
\simeq \mc{V}^{hK^p}.
$$
Since we have
\begin{gather*}
\mc{V}^{hK^p_0} \simeq \TMF(K^p_0) = \TMF_p \\
\mc{V}^{hK^{p}_0(\ell)} \simeq \TMF(K^p_0(\ell)) = \TMF_0(\ell)_p
\end{gather*}
Thus there is an induced semi-cosimplicial decomposition
\begin{equation}\label{eq:cosimplicial}
Q(\ell) \simeq \Tot Q(\ell)^\bullet 
\end{equation}
where
\begin{equation}\label{eq:cosimplicialQ}
Q(\ell)^\bullet =
\left( \TMF_p
\begin{array}{c}
\rightarrow \\
\rightarrow
\end{array}
\begin{array}{c}
\TMF_0(\ell)_p \\
\times \\
\TMF_p
\end{array}
\begin{array}{c}
\rightarrow \\
\rightarrow \\
\rightarrow
\end{array}
TMF_0(\ell)_p
\right).
\end{equation}

For $p \ge 5$, the homotopy groups of $\TMF_p$ and $\TMF_0(\ell)_p$ are
concentrated in even degrees, and there are isomorphisms
\begin{align*} 
\pi_{2k}\TMF_p & \cong (M_k)^0_{\ZZ_p}, \\
\pi_{2k}\TMF_0(\ell)_p & \cong M_k(\Gamma_0(\ell))^0_{\ZZ_p}.
\end{align*}

Applying homotopy to the semi-cosimplicial spectrum $Q(\ell)^\bullet$
(\ref{eq:cosimplicialQ}) gives
a semi-cosimplicial abelian group
\begin{equation}\label{eq:cosimplicialgp}
C(\ell)_{2k}^\bullet := \left( (M_k)_{\ZZ_p}^0
\begin{array}{c}
\rightarrow \\
\rightarrow
\end{array}
\begin{array}{c}
M_k(\Gamma_0(\ell))_{\ZZ_p}^0 \\
\times \\
(M_k)_{\ZZ_p}^0
\end{array}
\begin{array}{c}
\rightarrow \\
\rightarrow \\
\rightarrow
\end{array}
M_k(\Gamma_0(\ell))_{\ZZ_p}^0
\right).
\end{equation}

The Bousfield-Kan spectral sequence for $Q(\ell)^\bullet$ takes the form
\begin{equation}\label{eq:BKSSQl}
E_1^{s,t} = C(\ell)_t^s \Rightarrow \pi_{t-s} Q(\ell).
\end{equation}

\begin{prop}\label{prop:collapse}
For $p \ge 5$, the spectral sequence (\ref{eq:BKSSQl}) collapses at $E_2$
to give an isomorphism
$$ \pi_n Q(\ell) \cong H^0(C(\ell)^\bullet_n) \oplus
H^{1}(C(\ell)^\bullet_{n+1}) \oplus H^2(C(\ell)^\bullet_{n+2}). $$
\end{prop}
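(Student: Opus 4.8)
The plan is to exploit the sparsity of the semi-cosimplicial spectrum $Q(\ell)^\bullet$ of (\ref{eq:cosimplicialQ}). It has exactly three cosimplicial levels, so the spectral sequence (\ref{eq:BKSSQl}) is concentrated in the columns $s = 0, 1, 2$, and hence every differential $d_r$ with $r \ge 3$ vanishes for degree reasons. Moreover, for $p \ge 5$ the isomorphisms $\pi_{2k}\TMF_p \cong (M_k)^0_{\ZZ_p}$ and $\pi_{2k}\TMF_0(\ell)_p \cong M_k(\Gamma_0(\ell))^0_{\ZZ_p}$, together with the fact that these homotopy groups vanish in odd degrees, show that $E_1^{s,t} = C(\ell)^s_t = 0$ whenever $t$ is odd.

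Granting this, the only possibly nonzero differential on the $E_2$-page is $d_2 \colon E_2^{0,t} \to E_2^{2,t+1}$, and I would rule it out by parity: $E_2^{0,t}$ is a subquotient of $C(\ell)^0_t$, which is zero for $t$ odd, while $E_2^{2,t+1}$ is a subquotient of $C(\ell)^2_{t+1}$, which is zero for $t$ even, so the source or the target always vanishes. Hence $E_2 = E_\infty$. Since $Q(\ell)^\bullet$ is a bounded (degree $\le 2$) semi-cosimplicial spectrum, the spectral sequence converges strongly, so $\pi_n Q(\ell)$ acquires a three-step filtration whose associated graded pieces are $H^0(C(\ell)^\bullet_n)$, $H^1(C(\ell)^\bullet_{n+1})$, and $H^2(C(\ell)^\bullet_{n+2})$, sitting in filtrations $0$, $1$, and $2$ respectively.

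The remaining point — and the only one that is not purely formal — is the extension problem. The parity observation applies again: for $n$ odd only the middle piece $H^1(C(\ell)^\bullet_{n+1})$ survives, so there is nothing to prove; for $n$ even the middle piece vanishes, leaving a short exact sequence
$$ 0 \to H^2(C(\ell)^\bullet_{n+2}) \to \pi_n Q(\ell) \to H^0(C(\ell)^\bullet_n) \to 0. $$
I would split this by noting that $H^0(C(\ell)^\bullet_n)$ is a $\ZZ_p$-submodule of $(M_{n/2})^0_{\ZZ_p}$, which is a free $\ZZ_p$-module: for $p \ge 5$ it is a graded piece of $\ZZ_p[E_4,E_6][\Delta^{-1}]$, which one exhibits as an increasing union of finitely generated free $\ZZ_p$-modules along the split inclusions given by multiplication by $\Delta$ (the relevant cokernels, being graded pieces of $\ZZ_p[E_4,E_6]/(E_4^3-E_6^2)$, are $\ZZ_p$-torsion-free and finitely generated, hence free). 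Since $\ZZ_p$ is a principal ideal domain, submodules of free modules are free, so $H^0(C(\ell)^\bullet_n)$ is free, hence projective, and the sequence splits. I expect this last bookkeeping — the $\ZZ_p$-freeness of the modules of meromorphic modular forms — to be the only step requiring care; the collapse of the spectral sequence at $E_2$ is forced purely by sparseness together with the even concentration of $\TMF$.
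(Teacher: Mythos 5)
Your proof is correct, but it handles the key point differently from the paper. The paper's proof rests on an observation slightly sharper than the evenness you invoke: modular forms of odd \emph{weight} vanish (for level $1$, and for $\Gamma_0(\ell)$ because inversion $[-1]$ is an automorphism of any $\Gamma_0(\ell)$-structure), so $C(\ell)^s_t = 0$ unless $t \equiv 0 \bmod 4$. With that concentration, in each total degree $n$ at most one of $H^0(C(\ell)^\bullet_n)$, $H^1(C(\ell)^\bullet_{n+1})$, $H^2(C(\ell)^\bullet_{n+2})$ can be nonzero, so the $d_2$ differential and all possible filtration extensions are ruled out in one stroke; this is also exactly what feeds Corollary~\ref{cor:collapseisomorphism1.5}. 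You use only the weaker fact that $\pi_*\TMF_p$ and $\pi_*\TMF_0(\ell)_p$ are even, which kills $d_2$ by parity but leaves a genuine potential extension $0 \to H^2(C(\ell)^\bullet_{n+2}) \to \pi_n Q(\ell) \to H^0(C(\ell)^\bullet_n) \to 0$ for $n$ even; you then split it by showing $H^0(C(\ell)^\bullet_n)$ is a free, hence projective, $\ZZ_p$-module, being a submodule of $(M_{n/2})^0_{\ZZ_p}$, which you exhibit as an increasing union of finitely generated free modules along $\cdot\Delta$ with torsion-free quotients (graded pieces of $\ZZ_p[E_4,E_6]/(E_4^3-E_6^2)$, using that $1728$ is invertible for $p \ge 5$), and invoking that submodules of free modules over the PID $\ZZ_p$ are free. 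That argument is valid, but it is heavier than necessary and buys slightly less: it only splits the extension, whereas the mod-$4$ concentration shows the two graded pieces never coexist in the same stem, which is the refinement the paper uses afterwards to identify each $\pi_t Q(\ell)$ with a single cohomology group.
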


\begin{proof}
The rings of modular forms $M_*$ and $M_*(\Gamma_0(\ell))$ are concentrated
in even weights.  This easily follows in the case of $\Gamma_0(\ell)$ from
the fact that the inversion $[-1]: C \rightarrow C$ gives an automorphism
of any $\Gamma_0(\ell)$-structure.  Thus there is no room for
differentials, or hidden extensions, in the spectral sequence
(\ref{eq:BKSSQl}).
\end{proof}

In fact, since we have argued that $H^s(C(\ell)^\bullet_t)$ is non-zero
unless $t \equiv 0 \mod 4$, we have

\begin{cor}\label{cor:collapseisomorphism1.5}
For $p \ge 5$, there are isomorphisms
$$
\pi_t Q(\ell) \cong H^s(C(\ell)^\bullet)_{t+s}
$$
where $t = 4k-s$ and $0 \le s < 4$.
\end{cor}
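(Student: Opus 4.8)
The corollary is a repackaging of Proposition~\ref{prop:collapse}, and the plan is to combine it with a single vanishing observation. First I would record that the semi-cosimplicial abelian group $C(\ell)^\bullet_t$ of (\ref{eq:cosimplicialgp}) is zero unless $t \equiv 0 \mod 4$: for $t$ odd there is nothing there, while for $t = 2k$ each term is assembled from the spaces of weight-$k$ modular forms $(M_k)^0_{\ZZ_p}$ and $M_k(\Gamma_0(\ell))^0_{\ZZ_p}$, and---exactly as in the proof of Proposition~\ref{prop:collapse}, using that $[-1]$ is an automorphism of any elliptic curve (together with its $\Gamma_0(\ell)$-structure)---these vanish when $k$ is odd. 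Hence $H^s(C(\ell)^\bullet_t) = 0$ unless $4 \mid t$.

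Next I would feed this into the isomorphism of Proposition~\ref{prop:collapse},
$$ \pi_n Q(\ell) \cong H^0(C(\ell)^\bullet_n) \oplus H^1(C(\ell)^\bullet_{n+1}) \oplus H^2(C(\ell)^\bullet_{n+2}). $$
Among the three consecutive integers $n$, $n+1$, $n+2$ at most one is divisible by $4$, so at most one summand on the right survives. If that summand is the one indexed by $s \in \{0,1,2\}$, then $n + s \equiv 0 \mod 4$; writing $t = n$ and $t + s = 4k$ this reads $\pi_t Q(\ell) \cong H^s(C(\ell)^\bullet_{t+s})$ with $t = 4k - s$, which is the asserted isomorphism. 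The remaining residue class $n \equiv 1 \mod 4$ corresponds to $s = 3$: here none of $n$, $n+1$, $n+2$ is divisible by $4$, so $\pi_n Q(\ell) = 0$, while $H^3(C(\ell)^\bullet) = 0$ automatically because the semi-cosimplicial spectrum $Q(\ell)^\bullet$ of (\ref{eq:cosimplicialQ}) lives in cosimplicial degrees $0$, $1$, $2$; so the displayed isomorphism holds trivially in this case as well.

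There is no serious obstacle here---the entire content is the concentration of $H^\ast(C(\ell)^\bullet)$ in internal degrees divisible by $4$, which is immediate from the evenness of the weights of modular forms, and the rest is just matching the cohomological degree $s$ to the residue class of $t$ modulo $4$. One uses $p \ge 5$ only through inputs already in place: Proposition~\ref{prop:collapse} and the identifications $\pi_{2k}\TMF_p \cong (M_k)^0_{\ZZ_p}$ and $\pi_{2k}\TMF_0(\ell)_p \cong M_k(\Gamma_0(\ell))^0_{\ZZ_p}$.
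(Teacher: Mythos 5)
Your argument is correct and is essentially the paper's own: the corollary is deduced by feeding the concentration of $C(\ell)^\bullet_t$ in internal degrees $t \equiv 0 \bmod 4$ (coming from the evenness of the weights of modular forms) into the splitting of Proposition~\ref{prop:collapse}. Your extra bookkeeping for the residue class $s=3$, where both sides vanish since the complex has length three, is exactly the implicit content of the paper's one-line deduction.
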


\section{Effect of coface maps on modular forms}\label{sec:coface}

Suppose that $p \ge 5$.
In this section we will deduce the effect of the two initial cosimplicial coface maps of
$C(\ell)^\bullet_{2k}$ on
the level of $q$-expansions.
To aid in this, we recall from \cite{Behrens} that the semi-cosimplicial
resolution of $Q(\ell)$ may be constructed by applying the
Goerss-Hopkins-Miller presheaf to a semi-simplicial object in the site
$(\mc{M}_{ell})_{et}$:
\begin{equation}\label{eq:simpstack}
\mc{M}_\bullet := \left( (\mc{M}_{ell})_p
\begin{array}{c}
\leftarrow \\
\leftarrow
\end{array}
\begin{array}{c}
\mc{M}(\Gamma_0(\ell))_p \\
\amalg \\
(\mc{M}_{ell})_p
\end{array}
\begin{array}{c}
\leftarrow \\
\leftarrow \\
\leftarrow
\end{array}
\mc{M}(\Gamma_0(\ell))_p
\right).
\end{equation}
The coface maps $d_i: \mc{M}_1 \rightarrow \mc{M}_0$ are given on
$R$-points by
\begin{align*}
d_i: (\mc{M}_{ell})_p & \rightarrow (\mc{M}_{ell})_p \\
d_0: C & \mapsto C/C[\ell] \\
d_1: C & \mapsto C \\
d_i: (\mc{M}(\Gamma_0(\ell)))_p & \rightarrow (\mc{M}_{ell})_p \\
d_0: (C,H) & \mapsto C/H \\
d_1: (C,H) & \mapsto C \\
\end{align*}

\begin{prop}\label{prop:diffq}
Consider the morphisms
$$ d_0, d_1 : (M_k)^0_{\ZZ_p} \rightarrow M_k(\Gamma_0(\ell))^0_{\ZZ_p} \times
(M_k)^0_{\ZZ_p} $$
induced by the initial coface maps of the cosimplicial abelian group
$C(\ell)^\bullet_{2k}$.
On the level of $q$-expansions, the maps are given by
\begin{align*}
d_0(f(q)) & := (\ell^kf(q^\ell), \ell^kf(q)), \\
d_1(f(q)) & := (f(q), f(q)).
\end{align*}
\end{prop}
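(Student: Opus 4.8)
### Proof proposal

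The plan is to compute the coface maps on $q$-expansions by tracking, through the isomorphisms $\pi_{2k}\TMF_p \cong (M_k)^0_{\ZZ_p}$ and $\pi_{2k}\TMF_0(\ell)_p \cong M_k(\Gamma_0(\ell))^0_{\ZZ_p}$, the effect of the geometric maps $d_0, d_1 : \mc{M}_1 \to \mc{M}_0$ described just before the statement. Since the semi-cosimplicial spectrum $Q(\ell)^\bullet$ is obtained by applying the Goerss--Hopkins--Miller presheaf to the semi-simplicial stack $\mc{M}_\bullet$ of (\ref{eq:simpstack}), the coface map $d_i^* : \TMF(\mc{M}_0) \to \TMF(\mc{M}_1)$ on homotopy is the map induced by $d_i$ on the line bundle $\omega^{\otimes k}$ of weight-$k$ modular forms. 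So $d_1$, which is just the inclusion $\mc{M}(\Gamma_0(\ell)) \to \mc{M}_{ell}$ forgetting the level structure (respectively the identity on $\mc{M}_{ell}$), pulls back a modular form $f$ to $f$ itself, with unchanged $q$-expansion; this gives $d_1(f(q)) = (f(q), f(q))$ immediately.

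The substance is the computation of $d_0$, which on the first factor sends $(C,H) \mapsto C/H$ and on the second sends $C \mapsto C/C[\ell]$. A weight-$k$ modular form is a rule assigning to a pair $(C, \omega)$ (elliptic curve with invariant differential) an element of the base ring, homogeneous of degree $-k$ in $\omega$. Given the isogeny $\phi = \phi_H : C \to C/H$ of degree $\ell$, there is no canonical differential on $C/H$, but the dual isogeny $\phi^\vee : C/H \to C$ satisfies $\phi^\vee \circ \phi = [\ell]$. The key identity is that $\phi^* : \omega_{C/H} \to \omega_C$ is an isomorphism when $\ell$ is invertible (which it is, since we work over $\ZZ_p$ with $\ell \ne p$), so one can transport $\omega_C$ to $C/H$ via $(\phi^*)^{-1}$. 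The standard Tate-curve computation then shows: if $C = \mr{Tate}(q)$ with canonical differential and $H$ is the canonical subgroup $\mu_\ell$, then $C/H \cong \mr{Tate}(q^\ell)$, but the differential transported via $(\phi^*)^{-1}$ differs from the canonical one on $\mr{Tate}(q^\ell)$ by a factor of $\ell$. Accounting for the degree-$(-k)$ homogeneity, this produces the factor $\ell^k$, giving $d_0(f(q)) = (\ell^k f(q^\ell), \ell^k f(q))$ — the second coordinate being the analogous computation for $C \mapsto C/C[\ell]$, where $C[\ell] \supset \mu_\ell$ is the full $\ell$-torsion and the quotient is again $\mr{Tate}(q^\ell)$ up to the same scaling.

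The main obstacle is getting the normalization exactly right: pinning down the factor of $\ell$ (as opposed to $\ell^{-1}$ or $1$) coming from the comparison of the canonical differentials on $\mr{Tate}(q)$ and $\mr{Tate}(q^\ell)$ under the transported versus intrinsic trivializations, and checking that the spectrum-level map $d_i^*$ really does realize the expected pullback on $\pi_{2k}$ rather than some twist of it. I would settle the first point by an explicit calculation with the Tate curve over $\ZZ((q))$: the isogeny $\mr{Tate}(q) \to \mr{Tate}(q)/\mu_\ell = \mr{Tate}(q^\ell)$ is the one with kernel $\mu_\ell$, and one compares $\frac{du}{u}$ on the source with its image, using that the composite with the dual isogeny is multiplication by $\ell$. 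For the second point, I would invoke that the Goerss--Hopkins--Miller / Lurie construction is functorial in the moduli stack (as used in Section~\ref{sec:tmf}), so $d_i^*$ on $\pi_{2k}$ is precisely the map on sections of $\omega^{\otimes k}$ induced by $d_i$, and this identification is compatible with the $q$-expansion embedding via the Tate curve point of $\mc{M}_{ell}$. Once the scaling factor is fixed, matching it against the two geometric formulas for $d_0$ finishes the proof.
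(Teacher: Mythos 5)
Your treatment of $d_1$ and of the $M_k(\Gamma_0(\ell))^0$-component of $d_0$ is essentially the paper's argument: reduce to the Tate curve (the paper chooses an embedding $\ZZ_p \hookrightarrow \CC$ and invokes the $q$-expansion principle, while you work with the algebraic Tate curve over $\ZZ((q))$ — either works), use the isogeny $z \mapsto z^\ell$ with kernel $\mu_\ell$ onto $\mr{Tate}(q^\ell)$, and observe that the invariant differential scales by $\ell$, so that weight-$k$ homogeneity produces the factor $\ell^k$ in front of $f(q^\ell)$. The functoriality point you raise (that the coface maps on $\pi_{2k}$ are the pullbacks on sections of $\omega^{\otimes k}$ induced by the maps of stacks (\ref{eq:simpstack})) is exactly what the paper uses implicitly.

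However, your justification of the second coordinate of $d_0$ is wrong as stated: the quotient $C/C[\ell]$ is \emph{not} $\mr{Tate}(q^\ell)$. The full $\ell$-torsion has order $\ell^2$, and multiplication by $\ell$ identifies $C/C[\ell]$ with $C = \mr{Tate}(q)$ itself; concretely, $z \mapsto z^\ell$ on $\CC^\times/q^{\ZZ}$ kills only $\mu_\ell$ and lands in $\CC^\times/q^{\ell\ZZ}$, but the image of $C[\ell]$ still contains the class of $q$, and the further quotient returns $\CC^\times/q^{\ZZ}$. If the quotient really were $\mr{Tate}(q^\ell)$, your computation would give $\ell^k f(q^\ell)$ in the second slot, contradicting the formula $\ell^k f(q)$ you are trying to prove. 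The paper handles this factor by using the $\ell$-th power (multiplication-by-$\ell$) map $[\ell] : C_q \to C_q$, which realizes $C \to C/C[\ell]$ and satisfies $[\ell]^*(dz/z) = \ell\, dz/z$, so the $q$-variable is unchanged and only the scalar $\ell^k$ appears. With that correction, your proposal coincides with the paper's proof.
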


\begin{proof}
It is clear from the description of the map $d_1$ that its effect on
$q$-expansions is as given. 
Choosing an embedding $\ZZ_p \hookrightarrow \CC$, by the $q$-expansion
principle, it suffices to verify
these identities hold when we base-change to $\CC$ and consider the Tate
curve:
$$ C_q := \CC^\times/q^\ZZ. $$
The group of $\ell$th roots of unity $\mu_\ell \subset \CC^\times$ induces  
a $\Gamma_0(\ell)$-structure on the Tate curve $C_q$:
$$ \mu_\ell \subset \CC^\times/q^\ZZ = C_q. $$
This level structure is the kernel of the isogeny
\begin{align*}
\phi_{\mu_{\ell}}: C_q = \CC^\times/q^\ZZ & \rightarrow \CC^\times/q^{\ell
\ZZ} = C_{q^\ell}, \\
z & \mapsto z^\ell.
\end{align*}
The invariant differential $dz/z$ on $C_q$ transforms under this isogeny by
$$ \phi^* (dz/z) = \ell dz/z. $$
It follows that $d_0$ on the $M_k(\Gamma_0(\ell))^0_{\ZZ_p}$-component is 
given by
\begin{align*}
d_0: (M_k)^0_{\ZZ_p} & \rightarrow M_k(\Gamma_0(\ell))^0_{\ZZ_p}, \\
f(q) & \mapsto \ell^k f(q^{\ell}),
\end{align*}
as desired.  The $\ell$th power map
\begin{align*}
[\ell]: C_q & \rightarrow C_q \\
z & \mapsto z^\ell 
\end{align*}
transforms the invariant differential by
$$ [\ell]^* (dz/z) = \ell dz/z. $$
If follows that the $(M_k)^0_{\ZZ_p}$ component of $d_0$ is given by
\begin{align*}
d_0: (M_k)^0_{\ZZ_p} & \rightarrow (M_k)^0_{\ZZ_p}, \\
f(q) & \mapsto \ell^k f(q).
\end{align*}
\end{proof}

\section{The chromatic spectral sequence for $Q(\ell)$}\label{sec:cssQl}

The following lemma implies that the chromatic resolution of $Q(\ell)$ is
finite.

\begin{lem}\label{lem:E2local}
The spectrum $Q(\ell)$ is $E(2)$-local.
\end{lem}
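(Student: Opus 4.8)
The plan is to build $Q(\ell)$ out of pieces that are already known to be $E(2)$-local, and then use the fact that $E(2)$-locality is preserved under the relevant limit constructions. Since $Q(\ell) \simeq \Tot Q(\ell)^\bullet$ by (\ref{eq:cosimplicial}), and $\Tot$ of a cosimplicial spectrum is a homotopy limit, it suffices to show that each term of the semi-cosimplicial spectrum $Q(\ell)^\bullet$ in (\ref{eq:cosimplicialQ}) is $E(2)$-local; the class of $E(2)$-local spectra is closed under homotopy limits, so the totalization will be $E(2)$-local as well. The terms appearing are finite products of copies of $\TMF_p$ and $\TMF_0(\ell)_p$, so everything reduces to the single claim that $\TMF(K^p)$ is $E(2)$-local for the relevant compact open subgroups $K^p$ (in particular $K^p_0$ and $K^p_0(\ell)$).

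First I would recall that $\TMF(K^p) \simeq \mc{V}^{hK^p}$ is assembled, via hyperdescent for the presheaf $\mc{E}_{K^p}$ on $(\mc{M}(K^p)^\wedge_p)_{et}$, as a homotopy limit of sections over an étale hypercover by affines $\mr{Spf}(R) \to \mc{M}(K^p)^\wedge_p$. Each such section is, by construction, a weakly even periodic elliptic spectrum $E$ associated to an elliptic curve $C/R$, whose formal group $\widehat{C}$ has height $1$ or $2$ at each point of $\mr{Spf}(R)$. Such a Landweber-exact (or, more precisely, Lurie-theoretic) elliptic spectrum has $E_*E$ built from heights $\le 2$, and a standard argument shows it is $E(2)$-local: it is $K(i)$-acyclic for $i > 2$ because its formal group never has height $> 2$, and together with the rational/lower-height information this forces $E(2)$-locality. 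The key input here is that the moduli of $p$-divisible groups underlying $\mc{M}(K^p)$ parametrizes $p$-divisible groups of height $2$, dimension $1$ (the $p$-divisible groups of elliptic curves), so no height $\ge 3$ phenomena occur.

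Then, since $E(2)$-local spectra are closed under homotopy limits, the hyperdescent presentation of $\TMF(K^p)$ as a limit of these elliptic sections shows $\TMF(K^p)$ is $E(2)$-local, and applying this to each factor in (\ref{eq:cosimplicialQ}) and passing to $\Tot$ gives the result for $Q(\ell)$.

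The main obstacle is the verification that each elliptic section spectrum $E = \mc{E}_{K^p}(\mr{Spf}(R))$ is genuinely $E(2)$-local rather than merely having a height-$\le 2$ formal group. This is where one must be careful: even-periodicity plus Landweber exactness gives smashing-friendly behavior, and one can invoke the standard fact (essentially the Hovey–Strickland analysis of $E(n)$-localization, or the Bousfield-class computation $\langle E \rangle = \langle E(2) \rangle$ for such elliptic spectra) that a Landweber-exact spectrum whose formal group has height $\le n$ everywhere on $\mr{Spec}\,\pi_0$ is $E(n)$-local; the $p$-completion is harmless since $E(2)$-localization already incorporates $p$-completeness in the relevant sense here. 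Once that single input is in hand, the rest is the formal closure of $E(2)$-local spectra under homotopy limits applied to the hyperdescent and $\Tot$ presentations, which is routine.
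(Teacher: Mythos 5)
Your argument follows the same route as the paper: the paper's proof simply notes that $\TMF_p$ and $\TMF_0(\ell)_p$ are $E(2)$-local and concludes via the totalization (\ref{eq:cosimplicial}), using closure of $E(2)$-local spectra under homotopy limits, exactly as you do. The only difference is that you additionally sketch a justification (hyperdescent plus height $\le 2$ of the elliptic section spectra) for the $E(2)$-locality of the $\TMF$-terms, which the paper takes as known; that sketch is consistent with the standard arguments.
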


\begin{proof}
The spectra $\TMF_p$ and $\TMF_0(\ell)_p$ are $E(2)$-local.  By
(\ref{eq:cosimplicial}), the spectrum $Q(\ell)$ is $E(2)$-local. 
\end{proof}

We deduce that the chromatic resolution for $Q(\ell)$ takes the following
form.
$$
\xymatrix{
M_0Q(\ell) \ar@{=}[d] & 
M_1Q(\ell) \ar[d] &
M_2Q(\ell) \ar[d] &
\\
Q(\ell)_{E(0)} &
Q(\ell)_{E(1)} \ar[l] &
Q(\ell) \ar[l] &
}
$$
Applying homotopy, we get a three line spectral sequence
\begin{equation}
E_1^{n,k} = 
\left\{
\begin{array}{ll}
\pi_k M_n Q(\ell), & n \le 2 \\
0, & n > 2
\end{array}
\right\}
\Rightarrow \pi_{k-n} Q(\ell).
\end{equation}
Assuming that $p \ge 5$, applying $M_n$ to the cosimplicial resolution 
(\ref{eq:cosimplicial}), we
get spectral sequences
\begin{gather}
E_2^{s,t} = H^s(C(\ell)^\bullet[p^{-1}])_t \Rightarrow \pi_{t-s} M_0Q(\ell)
\label{eq:M0Qlss}
\\
E_2^{s,t} = H^s(C(\ell)^\bullet/p^\infty[v_1^{-1}])_t \Rightarrow \pi_{t-s}
M_1Q(\ell) \label{eq:M1Qlss}
\\
E_2^{s,t} = H^s(C(\ell)^\bullet/(p^\infty, v_1^\infty))_t \Rightarrow
\pi_{t-s} M_2Q(\ell) \label{eq:M2Qlss}
\end{gather}
Here, the $E_2$-terms are the cohomology of the cosimplicial abelian group
obtained from applying the functor $\pi_*(M_n -)$ to
(\ref{eq:cosimplicial}).  The values of the resulting cosimplicial abelian
group are given by the following lemma.

\begin{lem}\label{lem:MnTMF}
Let $p \ge 5$ and $(N,p) = 1$.  Then
\begin{align*}
\pi_{2*}M_0\TMF_0(N)_p & = M_*(\Gamma_0(N))_{\QQ_p}^0,
\\ 
\pi_{2*}M_1\TMF_0(N)_p & = M_*(\Gamma_0(N))^0_{\ZZ_p}/p^\infty
[E_{p-1}^{-1}],
\\
\pi_{2*}M_2\TMF_0(N)_p & = M_*(\Gamma_0(N))^0_{\ZZ_p}/(p^\infty,
E_{p-1}^{\infty}). 
\end{align*}
\end{lem}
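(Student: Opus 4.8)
The plan is to reduce everything to the known structure of $\pi_{2*}\TMF_0(N)_p$ and the behavior of the monochromatic functors $M_n$, using the fact that for $p \ge 5$ the relevant homotopy is concentrated in even degrees and $v_1$ is represented by the Hasse invariant $E_{p-1}$. First I would recall that, since $(N,p)=1$ and $p \ge 5$, the spectrum $\TMF_0(N)_p$ is a weakly even periodic elliptic spectrum with $\pi_{2k}\TMF_0(N)_p \cong M_k(\Gamma_0(N))^0_{\ZZ_p}$, and that it is $E(2)$-local (as in Lemma~\ref{lem:E2local}). The $E(1)$-localization is obtained by inverting $v_1$; under the identification of $BP_*$-homology with modular forms, $v_1$ acts as multiplication by the mod-$p$ reduction of $E_{p-1}$, which lifts (non-canonically, but compatibly enough for the colimit) to the Hasse invariant. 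Concretely, $\TMF_0(N)_p$ is a Landweber-exact-type elliptic spectrum in this range, so $(\TMF_0(N)_p)_{E(1)}$ and the monochromatic layers can be computed algebraically from $\pi_*$.

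The three computations then proceed in order. For $M_0$: rationalization is exact on homotopy, so $\pi_{2*}M_0\TMF_0(N)_p = \pi_{2*}(\TMF_0(N)_p)_{E(0)} = \pi_{2*}\TMF_0(N)_p \otimes \QQ = M_*(\Gamma_0(N))^0_{\QQ_p}$; one checks $M_0 X \simeq X_{E(0)}$ here because $X$ is already $E(2)$-local and connective-type issues do not arise. For $M_1$: using the presentation (\ref{eq:Mncolimit}), $M_1 X \simeq \hocolim_i M(p^i)^0 \wedge X_{E(1)}$, where $X_{E(1)} = X[v_1^{-1}]$ has homotopy $M_*(\Gamma_0(N))^0_{\ZZ_p}[E_{p-1}^{-1}]$; smashing with the Moore spectrum $M(p^i)$ and passing to the colimit in $i$ gives the $p$-power-torsion quotient, yielding $M_*(\Gamma_0(N))^0_{\ZZ_p}/p^\infty[E_{p-1}^{-1}]$. (Here one uses that $v_1$-inversion is flat, so smashing with $M(p^i)$ introduces no $\mathrm{Tor}$ and the long exact sequences assemble into the stated quotient.) For $M_2$: one has $M_2 X \simeq \hocolim_{i,j} M(p^i, v_1^j)^0 \wedge X_{E(2)}$; since $X$ is already $E(2)$-local, $X_{E(2)} = X$, and smashing with $M(p^i, v_1^j)$ and taking the double colimit produces the iterated torsion quotient $M_*(\Gamma_0(N))^0_{\ZZ_p}/(p^\infty, E_{p-1}^\infty)$. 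The key input making all the $\mathrm{Tor}$ terms vanish is that multiplication by $p$ and by $E_{p-1}$ (equivalently $v_1$) act injectively on the relevant localized/quotiented modules of modular forms in this range, which is where the hypothesis $p \ge 5$ and the classical theory of the Hasse invariant (to be recalled in Section~\ref{sec:congruences}) enter.

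The main obstacle is the bookkeeping around the generalized Moore spectra: one must verify that the cofinal system $\{M(p^i, v_1^j)\}$ exists with the needed multiplicative properties (guaranteed by Hopkins--Smith, already cited in the excerpt) and that the homotopy colimit computing $M_n$ commutes with the identifications of homotopy groups with modular forms, i.e.\ that there are no lim$^1$ obstructions and no even/odd degree interference. This is routine given that $\pi_*\TMF_0(N)_p$ is concentrated in even degrees and torsion-free, so the relevant short exact sequences of homotopy groups split into even degrees and the colimits are exact; I would state this cleanly and refer to the standard treatment of monochromatic layers for Landweber-exact spectra rather than re-derive it.
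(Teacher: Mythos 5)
Your proposal is correct and follows essentially the same route as the paper: the paper's proof is simply a direct application of the colimit presentation $M_nX \simeq \hocolim_I M(I)^0 \wedge X_{E(n)}$ from (\ref{eq:Mncolimit}), together with the fact that $E_{p-1}$ reduces mod $p$ to the Hasse invariant $v_1$, which is exactly the argument you spell out. The extra bookkeeping you supply (even-degree concentration, torsion-freeness, regularity of $p$ and $E_{p-1}$, passage to colimits) is the implicit content of the paper's one-line proof.
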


\begin{proof}
This is a direct application of (\ref{eq:Mncolimit}). 
Here, $E_{p-1}$ is the $(p-1)$st Eisenstein series, which reduces to the
Hasse invariant $v_1$ mod $p$ \cite[Sec.~2.1]{Katz}.
\end{proof}

In particular, since $C(\ell)^\bullet_t$ 
is non-zero only when $t \equiv
0 \mod 4$, the same argument proving
Corollary~\ref{cor:collapseisomorphism1.5} gives:

\begin{cor}\label{cor:collapseisomorphism2}
For $p \ge 5$, there are isomorphisms
\begin{align*}
\pi_t M_0Q(\ell) & \cong H^s(C(\ell)^\bullet[p^{-1}])_{t+s} \\
\pi_t M_1Q(\ell) & \cong H^s(C(\ell)^\bullet/(p^\infty)[v_1^{-1}])_{t+s} \\
\pi_t M_2Q(\ell) & \cong H^s(C(\ell)^\bullet/(p^\infty, v_1^\infty))_{t+s}
\end{align*}
where $t = 4k-s$ and $0 \le s < 4$.
\end{cor}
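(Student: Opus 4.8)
The plan is to reuse, for each of the three monochromatic layers, the collapse argument already used to prove Proposition~\ref{prop:collapse} and Corollary~\ref{cor:collapseisomorphism1.5}. First I would note that applying $M_n$ ($n=0,1,2$) levelwise to the finite semi-cosimplicial resolution (\ref{eq:cosimplicial}) produces the three Bousfield--Kan spectral sequences (\ref{eq:M0Qlss})--(\ref{eq:M2Qlss}), whose $E_2$-pages are the cohomologies of the cochain complexes $C(\ell)^\bullet[p^{-1}]$, $C(\ell)^\bullet/p^\infty[v_1^{-1}]$, and $C(\ell)^\bullet/(p^\infty, v_1^\infty)$; everything then hinges on the claim that each of these complexes is concentrated in internal degrees $t \equiv 0 \bmod 4$, just as $C(\ell)^\bullet_t$ is.

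To see this I would appeal to Lemma~\ref{lem:MnTMF}: in cohomological degree $s$ and internal degree $2k$, each of the three complexes is built out of $M_k$-type groups for $\Gamma_0(1)$ and $\Gamma_0(\ell)$ --- their rationalizations, $p$-power-torsion quotients, $v_1$-localizations, or $v_1$-power-torsion quotients. The rings $M_*$ and $M_*(\Gamma_0(\ell))$ are supported in even weights, by the action of $[-1]$ on $\Gamma_0(N)$-structures as in the proof of Proposition~\ref{prop:collapse}, so the unlocalized groups vanish for $k$ odd; and since $p$ is odd, $v_1 = E_{p-1}$ has even weight $p-1$ (internal degree $2(p-1) \equiv 0 \bmod 4$), so inverting $v_1$ and passing to the colimits of (\ref{eq:Mncolimit}) cannot introduce classes outside internal degrees $\equiv 0 \bmod 4$. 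Granting the concentration, the three-column spectral sequences (\ref{eq:M0Qlss})--(\ref{eq:M2Qlss}) have no room for differentials: the only candidate is $d_2 \colon E_2^{0,t} \to E_2^{2,t+1}$, whose target sits in internal degree $t+1 \not\equiv 0 \bmod 4$ and is therefore zero, so $E_2 = E_\infty$. For the same reason, in each total degree $t$ at most one of the filtration quotients $H^0(\,\cdot\,)_t$, $H^1(\,\cdot\,)_{t+1}$, $H^2(\,\cdot\,)_{t+2}$ is nonzero, so there are no extension problems, and one reads off $\pi_t M_n Q(\ell) \cong H^s(\,\cdot\,)_{t+s}$ for the unique $s\in\{0,1,2,3\}$ with $t+s\equiv 0 \bmod 4$ (with $H^3 = 0$), i.e.\ the stated isomorphism with $t = 4k-s$.

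I do not anticipate a real obstacle --- the corollary is a formality given the degree bookkeeping. The only point I would take care to spell out is that forming $M_n$ commutes with the finite totalization (\ref{eq:cosimplicial}), so that (\ref{eq:M0Qlss})--(\ref{eq:M2Qlss}) genuinely compute $\pi_* M_n Q(\ell)$: this holds because $\Tot$ of a length-$2$ semi-cosimplicial object is a finite homotopy limit, each term $\TMF_p$, $\TMF_0(\ell)_p$ is already $E(2)$-local (cf.\ Lemma~\ref{lem:E2local}), and both smashing with a finite Moore spectrum and the filtered colimit in (\ref{eq:Mncolimit}) commute with finite homotopy limits; alternatively, one may $M_n$-localize the smooth $GL_2(\AF^{p,\infty})$-spectrum $\mc{V}$ and rerun the derivation of (\ref{eq:cosimplicial}). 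With that in place the collapse argument applies verbatim.
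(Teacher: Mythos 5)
Your proof is correct and follows essentially the same route as the paper: the paper proves this corollary by noting that the $E_2$-terms of the spectral sequences (\ref{eq:M0Qlss})--(\ref{eq:M2Qlss}) are, by Lemma~\ref{lem:MnTMF}, concentrated in internal degrees $t \equiv 0 \bmod 4$, and then invoking the same sparsity/collapse argument as in Proposition~\ref{prop:collapse} and Corollary~\ref{cor:collapseisomorphism1.5}. Your additional remarks (that inverting $v_1 = E_{p-1}$ preserves the mod $4$ concentration, and that $M_n$ commutes with the finite totalization) are points the paper leaves implicit, but they are correct and not a departure from its argument.
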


We end this section by relating $M_2Q(\ell)$ to the subgroup $\Gamma_\ell
\subset \MS_n$.  By Theorem~\ref{thm:K2Ql} there are equivalences
$$ M_2 Q(\ell) \simeq M_2 (Q(\ell)_{K(2)}) \simeq M_2
((E_2^{h\Gamma_\ell})^{h\Fr_p}). $$

We recall the following result from \cite{Behrens}.  

\begin{prop}\label{prop:actionbldg}
The group $\Gamma_\ell$ acts on the building $\mc{B}$ for $GL_2(\QQ_\ell)$
with finite stabilizers, given by groups of automorphisms of 
supersingular curves.
\end{prop}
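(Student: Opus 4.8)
The plan is to use the dictionary between the building $\mc{B}$ of $GL_2(\QQ_\ell)$ and $\ell$-power quasi-isogenies of $C_0$, and then to compute simplex stabilizers by a place-by-place integrality argument. Since $D = \End^0(C_0)$ is split at $\ell$, the uniformization $\eta_0$ identifies $(D\otimes\QQ_\ell)^\times$ with $GL_2(\QQ_\ell)$ acting on $V_\ell(C_0) \cong \QQ_\ell^2$, and $\Gamma_\ell = (\mc{O}_D[\ell^{-1}])^\times$ acts on $\mc{B}$ through the resulting embedding $\Gamma_\ell \hookrightarrow GL_2(\QQ_\ell)$. By the description (\ref{eq:BGLQl}) (see \cite{Behrens}), the $0$-simplices of $\mc{B}$ are the $\ZZ_\ell$-lattices $\Lambda \subset V_\ell(C_0)$ and the $1$-simplices are inclusions $\Lambda' \subset \Lambda$ with $\Lambda/\Lambda'$ cyclic of order $\ell$. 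Via the Tate module of $C_0$, such a lattice $\Lambda$ determines, up to isomorphism of the target, an $\ell$-power quasi-isogeny $\phi_\Lambda : C_0 \dashrightarrow C_\Lambda$ to another supersingular curve over $\br{\FF}_p$, with $\Lambda = (\phi_\Lambda)_*^{-1}(T_\ell(C_\Lambda))$; when $T_\ell(C_0) \subseteq \Lambda$ this is the honest isogeny with kernel $\Lambda/T_\ell(C_0)$, and a $1$-simplex likewise corresponds to a supersingular curve equipped with a subgroup of order $\ell$, i.e.\ a $\Gamma_0(\ell)$-structure.

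Fix a $0$-simplex $\Lambda$ and put $\phi = \phi_\Lambda$. An element $\alpha \in \Gamma_\ell$ fixes $\Lambda$ precisely when the conjugate quasi-endomorphism $\beta := \phi\,\alpha\,\phi^{-1} \in D^\times = \End^0(C_\Lambda)$ preserves the lattice $T_\ell(C_\Lambda)$. I claim any such $\beta$ lies in $\aut(C_\Lambda) = \End(C_\Lambda)^\times$; since automorphisms of $C_\Lambda$ visibly fix $\Lambda$, this identifies $\mr{Stab}_{\Gamma_\ell}(\Lambda)$ with $\aut(C_\Lambda)$. To prove the claim it suffices to check $\beta \in (\End(C_\Lambda)\otimes\ZZ_v)^\times$ at every finite place $v$. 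For $v \ne \ell$ the quasi-isogeny $\phi$ has degree prime to $v$, so conjugation by $\phi$ identifies $\End(C_0)\otimes\ZZ_v$ with $\End(C_\Lambda)\otimes\ZZ_v$; since $\alpha$ and $\alpha^{-1}$ lie in $\mc{O}_D[\ell^{-1}] \subseteq \mc{O}_D\otimes\ZZ_v$, we get $\beta \in (\End(C_\Lambda)\otimes\ZZ_v)^\times$. For $v = \ell$: preservation of $T_\ell(C_\Lambda)$ by $\beta$ and $\beta^{-1}$ is exactly the statement that $\alpha$ and $\alpha^{-1}$ fix $\Lambda$, so $\beta \in (\End(C_\Lambda)\otimes\ZZ_\ell)^\times$. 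Hence $\beta \in \End(C_\Lambda)^\times$. Running the same computation on a $1$-simplex identifies its stabilizer with the subgroup of $\aut(C_\Lambda)$ fixing the chosen order-$\ell$ subgroup, again a group of automorphisms of a supersingular curve with level structure. This computation also explains why the center $\{\pm\ell^{\,n}\} \subset \Gamma_\ell$ causes no trouble: a nontrivial $\ell$-power scalar multiplies $T_\ell(C_0)$ by a non-unit, hence fixes no simplex of $\mc{B}$, consistent with the lattice (rather than homothety-class) description of $\mc{B}$ in (\ref{eq:BGLQl}).

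It remains to observe that these automorphism groups are finite. The algebra $D$ is ramified at $\infty$, so it is a definite quaternion algebra over $\QQ$, and $\End(C_\Lambda)$ is a maximal order in it (the standard structure theory of endomorphism rings of supersingular elliptic curves over $\br{\FF}_p$). The unit group of any order in a definite quaternion algebra over $\QQ$ is finite, since the reduced-norm form is positive definite and units lie on a sphere; therefore each $\aut(C_\Lambda)$ is finite, and as $\mc{B}$ is one-dimensional every simplex of $\mc{B}$ has finite $\Gamma_\ell$-stabilizer. The only step requiring genuine care is setting up the dictionary between lattices in $V_\ell(C_0)$ and supersingular curves carrying an $\ell$-power quasi-isogeny from $C_0$, and keeping track of which simplices of $\mc{B}$ carry which level structure; once that is in place, the place-by-place verification above is routine.
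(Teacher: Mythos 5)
The paper does not actually prove this proposition: it recalls it from \cite{Behrens}, so there is no internal argument to compare against. Your proof is essentially the standard (and surely the intended) one, and its core is correct: the dictionary between lattices $\Lambda \subset V_\ell(C_0)$ and $\ell$-power quasi-isogenies $C_0 \to C_\Lambda$, the place-by-place integrality computation identifying the stabilizer of a vertex with $\Aut(C_\Lambda)$ (using that $\phi_\Lambda$ is prime to $v$ for $v \ne \ell$ and that integrality at $\ell$ is exactly preservation of $T_\ell(C_\Lambda)$), and finiteness because $D$ is ramified at $\infty$, hence definite, so unit groups of orders in it are finite.

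One genuine slip should be repaired: in this paper $\mc{B}$ is the building for $GL_2(\QQ_\ell)$, not the tree for $PGL_2(\QQ_\ell)$. As (\ref{eq:BGLQl}) records, it has two $GL_2(\QQ_\ell)$-orbits of $1$-cells (isotropy conjugate to $K_0(\ell)$ and to $GL_2(\ZZ_\ell)$) and an orbit of $2$-cells (isotropy conjugate to $K_0(\ell)$); its realization is $2$-dimensional (the product of the $PGL_2$-tree with a line), and this $2$-dimensionality is used later, in the proof of Lemma~\ref{lem:collapseisomorphism3}. So your parenthetical ``as $\mc{B}$ is one-dimensional'' is false, and you have only computed stabilizers for the vertices and one type of edge. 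The gap is harmless but should be closed: either observe that the stabilizer of any cell permutes its finitely many vertices and the kernel of that permutation action lies in a vertex stabilizer, so all cell stabilizers are finite; or, better, run your conjugation/integrality argument on the remaining cells, which identifies the stabilizer of a $GL_2(\ZZ_\ell)$-type $1$-cell again with $\Aut(C_\Lambda)$ and the stabilizer of a $2$-cell with the automorphisms of $C_\Lambda$ preserving the chosen order-$\ell$ subgroup, so that every stabilizer is an automorphism group of a supersingular curve (possibly with $\Gamma_0(\ell)$-structure), exactly as the proposition asserts. The rest of your argument, including the remark that the central $\ell$-power scalars fix no simplex in the lattice (rather than homothety-class) model, is fine.
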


We deduce the following.

\begin{lem}
There is an equivalence 
$$ 
M_2((E_2^{h\Gamma_\ell})^{h\Fr_p}) \simeq 
((M_2E_2)^{h\Gamma_\ell})^{h\Fr_p}.
$$
\end{lem}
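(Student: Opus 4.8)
The plan is to commute the monochromatic functor $M_2(-)$ past the two homotopy fixed point constructions $(-)^{h\Gamma_\ell}$ and $(-)^{h\Fr_p}$ appearing in the equivalence $Q(\ell)_{K(2)} \simeq (E_2^{h\Gamma_\ell})^{h\Fr_p}$ of Theorem~\ref{thm:K2Ql}. The outer construction $(-)^{h\Fr_p}$ is by definition just the fiber of the self-map $\Fr_p - 1$, i.e.\ a finite homotopy limit; since $M_2$ is exact (it is the fiber of the localization map $X_{E(2)} \to X_{E(1)}$, both of which are exact), $M_2$ commutes with it, reducing the claim to the inner statement $M_2(E_2^{h\Gamma_\ell}) \simeq (M_2 E_2)^{h\Gamma_\ell}$. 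First I would set this up carefully, noting that since $Q(\ell)$ is already $E(2)$-local (Lemma~\ref{lem:E2local}), $M_2 Q(\ell) \simeq M_2(Q(\ell)_{K(2)})$, which is the starting point already recorded in the excerpt.

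The heart of the matter is therefore interchanging $M_2$ with the homotopy fixed points for the profinite group $\Gamma_\ell$ (more precisely its closure, a profinite subgroup of $\MS_2$). The strategy is to use Proposition~\ref{prop:actionbldg}: $\Gamma_\ell$ acts on the contractible building $\mc{B}$ for $GL_2(\QQ_\ell)$ with finite stabilizers, all of which are automorphism groups of supersingular elliptic curves. This gives a finite-dimensional (in fact, one-dimensional) semi-simplicial resolution of $E_2^{h\Gamma_\ell}$ whose terms are finite products of spectra of the form $E_2^{hF}$ with $F$ a finite subgroup of $\Gamma_\ell$. Concretely, one gets a finite totalization $E_2^{h\Gamma_\ell} \simeq \Tot\bigl( \prod_{\sigma} E_2^{h\Gamma_{\ell,\sigma}} \bigr)$ indexed over the cells of $\mc{B}/\Gamma_\ell$. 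Because the resolution is finite (the building is $1$-dimensional), $M_2$ commutes with the totalization, so it suffices to check $M_2(E_2^{hF}) \simeq (M_2 E_2)^{hF}$ for each finite group $F$. For a finite group, homotopy fixed points $(-)^{hF}$ are a finite homotopy limit (say, as a totalization of the cobar construction, or just using that $BF$ has a finite skeletal filtration after $p$-completion up to the usual convergence issues; more cleanly, use a transfer argument since $\abs{F}$ can be assumed prime to $p$ for $p \ge 5$ — the automorphism groups of supersingular curves have order prime to $p$ once $p \ge 5$). When $\abs{F}$ is prime to $p$, $(-)^{hF}$ on $p$-local spectra is a retract of the identity and in particular commutes with every exact functor, so $M_2(E_2^{hF}) \simeq (M_2 E_2)^{hF}$ is immediate, and one reassembles the two finite limits.

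The step I expect to be the main obstacle is the bookkeeping needed to justify that $M_2$ genuinely commutes with the relevant homotopy limits — the building resolution is finite and the finite-subgroup fixed points are unproblematic for $p \ge 5$, but one must be careful that the profinite (continuous) homotopy fixed points $E_2^{h\Gamma_\ell}$ are correctly modeled by this building complex rather than by a genuinely infinite limit, and that the identification of $M_2$ with the fiber of $X_{E(2)} \to X_{E(1)}$ is compatible with all the localization/completion operations in play (in particular that $M_2$ of a $K(2)$-local spectrum agrees with $M_2$ computed before $K(2)$-localization). I would handle this by citing the building resolution machinery of \cite{Behrens} and \cite{BehrensLawson} — where exactly this kind of finite-totalization description of $Q(\ell)$ and its $K(2)$-localization is established — and then invoking exactness of $M_2$ together with the finiteness of the totalization; the finite-group case is then a one-line transfer argument. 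All other steps (exactness of $M_2$, commuting $M_2$ with the $\Fr_p$-fiber) are formal.
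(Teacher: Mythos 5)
Your argument is correct in substance, but it takes a genuinely different route from the paper's. The paper's proof never resolves the $\Gamma_\ell$-fixed points at all: it uses the presentation (\ref{eq:Mncolimit}) of $M_2$ as $\hocolim_I M(I)^0 \wedge (-)$, notes that smashing with the finite complexes $M(I)^0$ passes through both $(-)^{h\Gamma_\ell}$ and $(-)^{h\Fr_p}$ for free, and then observes that the only remaining issue --- commuting the filtered homotopy colimit over $I$ with $(-)^{h\Gamma_\ell}$ --- is handled because $\Gamma_\ell$ has finite virtual cohomological dimension, which follows from Proposition~\ref{prop:actionbldg} (action on a contractible, finite-dimensional building with finite stabilizers). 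You instead use that same Proposition~\ref{prop:actionbldg} to replace $E_2^{h\Gamma_\ell}$ by a finite totalization of products of $E_2^{hF}$ with $F$ finite of order prime to $p$ (true for $p \ge 5$, where automorphism groups of supersingular curves have order $2$, $4$, or $6$), and then commute $M_2$ through finite limits by exactness and through $(-)^{hF}$ by a transfer/retract argument. Both proofs rest on the same geometric input; the paper's is shorter and avoids invoking the explicit resolution machinery of \cite{Behrens}, \cite{BehrensLawson}, while yours trades the colimit-versus-fixed-points interchange for an explicit finite decomposition, at the cost of needing the prime-to-$p$ stabilizer fact. Two small corrections: the fixed points here are taken for the \emph{discrete} group $\Gamma_\ell$, not its closure in $\MS_2$ --- by Theorem~\ref{thm:dense} that closure is all of $\MS_2$, and fixed points for it would give something quite different, so your parenthetical should be dropped (the building model is precisely how the discrete-group fixed points are computed); and the building as used here is $2$-dimensional (the tree for $PGL_2(\QQ_\ell)$ together with the direction recording the valuation of the determinant, which is why the resolutions (\ref{eq:BGLQl}) and (\ref{eq:cosimplicialQ}) have three levels), not $1$-dimensional --- harmless for your argument, since only finite-dimensionality is used.
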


\begin{proof}
Since the spectra $M(I)^0$ are finite, we have
$$
M(I)^0 \wedge ((E_2^{h\Gamma_\ell})^{h\Fr_p}) \simeq 
((M(I)^0 \wedge E_2)^{h\Gamma_\ell})^{h\Fr_p}.
$$
The result would follow from (\ref{eq:Mncolimit}) if we could commute the
homotopy colimit over $I$ with the homotopy fixed points with respect to
$\Gamma_\ell$.  However, by Proposition~\ref{prop:actionbldg}, 
the group $\Gamma_\ell$ acts on the building
$\mc{B}$ for $GL_2(\QQ_\ell)$ with finite stabilizers.
Since $\mc{B}$ is contractible and finite dimensional, we conclude that 
the group $\Gamma_\ell$ has finite virtual cohomological dimension.
\end{proof}

We conclude that there is an equivalence
$$ M_2 Q(\ell) \simeq ((M_2 E_2)^{h\Gamma_\ell})^{h\Fr_p} $$
and a homotopy fixed point spectral sequence
\begin{equation}\label{eq:Gammahfpss} 
E_2^{s,t} = H^s(\Gamma_\ell, \pi_t M_2 E_2)^{\Gal(\FF_p)} \Rightarrow 
\pi_{t-s} M_2Q(\ell).
\end{equation}

\begin{lem}\label{lem:collapseisomorphism3}
For $p \ge 5$, we have:
\begin{enumerate}
\item $H^s(\Gamma_\ell, \pi_t M_2E_2)^{\Gal(\FF_p)} = 0$ for $s > 2$. 
\item $H^s(\Gamma_\ell, \pi_t M_2E_2)^{\Gal(\FF_p)} = 0$ for $t \not\equiv
0 \mod 4$.
\item There are isomorphisms
$$ \pi_tM_2Q(\ell) \cong H^s(\Gamma_\ell, \pi_{t+s}M_2E_2)^{\Gal(\FF_p)} $$
where $t = 4k - s$ and $0 \le s < 4$.
\end{enumerate}
\end{lem}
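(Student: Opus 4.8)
The plan is to deduce all three parts from the homotopy fixed point spectral sequence (\ref{eq:Gammahfpss}) by combining a cohomological dimension bound on $\Gamma_\ell$ with the sparsity already established for the Morava $E$-theory of the sphere. First I would handle part (1): by Proposition~\ref{prop:actionbldg}, the group $\Gamma_\ell$ acts on the building $\mc{B}$ for $GL_2(\QQ_\ell)$ with finite stabilizers, and $\mc{B}$ is a contractible simplicial complex of dimension $1$. Hence $\Gamma_\ell$ has virtual cohomological dimension $1$. Since we have already passed to $M_2E_2$, whose homotopy is a $p$-divisible (indeed $p^\infty$-torsion) module, the finite subgroups contribute nothing to cohomology in positive degrees, and a Mayer--Vietoris / isotropy spectral sequence argument over the two orbits of cells in $\mc{B}_\bullet$ shows $H^s(\Gamma_\ell, \pi_t M_2E_2) = 0$ for $s > 1$; taking $\Gal(\FF_p)$-invariants (an exact functor, since we are over a field of characteristic zero after inverting, or because the relevant groups are already $p$-groups with prime-to-$p$ Galois action) preserves this vanishing. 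In particular it vanishes for $s>2$, which is all part (1) claims. Actually, since the argument via the building gives the sharper bound $s \le 1$ on the $\Gamma_\ell$-cohomology itself, I should state it that way and remark that the extra degree of slack in (1) comes from the $\Fr_p$ homotopy fixed points in $M_2Q(\ell) \simeq ((M_2E_2)^{h\Gamma_\ell})^{h\Fr_p}$, which can raise cohomological degree by at most one.

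For part (2), the key input is that $\Gamma_\ell \subset \MS_2$ and the action on $\pi_{2t}M_2E_2 \cong (\pi_{2t}E_2)/(p^\infty, v_1^\infty)$ of the central finite subgroup $\FF_p^\times \subset \MS_2$ is by $\td a^t$ under the Teichm\"uller lift, exactly as in the Lemma following (\ref{eq:MnANSS}). The subgroup $\FF_p^\times$ is central in $\MS_2$, hence its intersection with (or rather its image acting compatibly on) the relevant modules forces $H^s(\Gamma_\ell, \pi_{2t}M_2E_2)^{\Gal(\FF_p)}$ to vanish unless $t \equiv 0 \bmod (p-1)$; since $p \ge 5$ we have $p - 1 \ge 4$, so in particular $t$ (the topological degree, which is $2t$ in my indexing) is divisible by $4$ — more precisely, I would argue directly that $\pi_u M_2E_2 = 0$ for $u$ odd and that the $\FF_p^\times$-invariance kills the even-but-not-divisible-by-$4$ part, so that $H^s(\Gamma_\ell, \pi_u M_2E_2)^{\Gal(\FF_p)} = 0$ unless $u \equiv 0 \bmod 4$. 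This is the same mechanism used in Corollary~\ref{cor:collapseisomorphism1} and Corollary~\ref{cor:collapseisomorphism1.5}.

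Part (3) is then formal: by part (2) the $E_2$-page of (\ref{eq:Gammahfpss}) is concentrated in internal degrees $t \equiv 0 \bmod 4$, and by part (1) it lives in filtrations $s = 0, 1, 2$. A differential $d_r$ changes $(s,t)$ by $(r, r-1)$, so it would move an element from internal degree $t \equiv 0$ to internal degree $t + r - 1$; for this target also to be $\equiv 0 \bmod 4$ we would need $r \equiv 1 \bmod 4$, but such an $r$ is at least $5$, which exceeds the horizontal extent $s \le 2$ of the spectral sequence. Hence there are no differentials and no room for hidden extensions (the contributing groups in a fixed total degree $t - s = n$ sit in distinct internal degrees $t = n, n+1, \ldots$, only one of which is $\equiv 0 \bmod 4$), giving the claimed isomorphism $\pi_t M_2Q(\ell) \cong H^s(\Gamma_\ell, \pi_{t+s}M_2E_2)^{\Gal(\FF_p)}$ with $t = 4k - s$, $0 \le s < 4$. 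The main obstacle I anticipate is being careful about the interaction of the $\Fr_p$-homotopy-fixed-point construction with the $\Gamma_\ell$-cohomology — i.e.\ justifying cleanly that the composite fixed points $((M_2E_2)^{h\Gamma_\ell})^{h\Fr_p}$ still have an associated spectral sequence of the stated form with vanishing line $s \le 2$ — but since $\Fr_p$ generates a copy of $\widehat{\ZZ}$ of cohomological dimension $1$, this only adds one to the vanishing line of the $\Gamma_\ell$-cohomology, which is exactly the slack built into the statement of (1).
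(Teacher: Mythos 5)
Your overall architecture (run the spectral sequence (\ref{eq:Gammahfpss}), bound the filtration by a cohomological-dimension argument via the building, kill internal degrees not divisible by $4$ by a centrality argument, then observe there is no room for differentials or extensions) is the paper's, but two of your key steps have genuine errors. First, the building $\mc{B}$ in Proposition~\ref{prop:actionbldg} is the building for $GL_2(\QQ_\ell)$, not $PGL_2(\QQ_\ell)$: it is the tree crossed with a line on which the center acts by translation, hence contractible and \emph{$2$-dimensional} (this is visible in the semi-simplicial description (\ref{eq:BGLQl}), which has cells in degrees $0,1,2$, and is why the resolution of $Q(\ell)$ has three stages). The group $\Gamma_\ell$ contains the infinite central subgroup $\ell^{\ZZ}$, which acts trivially on the tree, so $\Gamma_\ell$ does \emph{not} act on a tree with finite stabilizers; your claimed sharper bound $H^s(\Gamma_\ell,\pi_tM_2E_2)=0$ for $s>1$ is false in general (by Corollary~\ref{cor:collapseisomorphism4} the $s=2$ groups compute $H^2$ of the length-two complex $C(\ell)^\bullet/(p^\infty,v_1^\infty)$ and carry $\pi_{4k-2}M_2Q(\ell)$). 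Relatedly, your remark that the ``slack'' in part (1) comes from the $\Fr_p$-fixed points is misplaced: statement (1) is purely about the group cohomology of $\Gamma_\ell$ (with Galois invariants applied afterwards), and the correct bound $s\le 2$ comes from the $2$-dimensionality of $\mc{B}$ together with the fact that the finite stabilizers (automorphism groups of supersingular curves) have order prime to $p$ for $p\ge 5$, so that the $p$-torsion coefficients see no stabilizer cohomology.

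Second, for part (2) your appeal to the central subgroup $\FF_p^\times\subset\MS_2$ does not apply here: the lemma that a central element acts trivially on cohomology requires the element to lie in the group whose cohomology is being computed, and the Teichm\"uller lifts are not elements of the discrete group $\Gamma_\ell=(\mc{O}_D[\ell^{-1}])^\times$ (density of $\Gamma_\ell$ in $\MS_2$, Theorem~\ref{thm:dense}, identifies only $H^0$, not higher cohomology). The only piece of $\FF_p^\times$ available inside $\Gamma_\ell$ is $\{\pm 1\}$, and this is exactly what the paper uses: the central, Galois-invariant element $[-1]\in\Gamma_\ell$ acts on $\pi_{2i}M_2E_2$ by $(-1)^i$, which forces vanishing unless the topological degree is divisible by $4$ --- and no more; your stronger claim of vanishing outside degrees divisible by $2(p-1)$ is unjustified for $\Gamma_\ell$-cohomology (and the intermediate step ``$p-1\ge 4$, hence divisible by $4$'' is in any case a non sequitur, though repairable since $4\mid 2(p-1)$). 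Your part (3) is fine as a formal sparsity argument and agrees with the paper, but it only becomes a proof once (1) and (2) are established by the corrected arguments above.
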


\begin{proof}
(1) follows from Proposition~\ref{prop:actionbldg}, together with the fact
that the coefficients are $p$-local and the building is contractible and
$2$-dimensional.  (2) follows from the fact that there is a central,
Galois invariant element $[-1] \in \Gamma_\ell$ (given by inversion) which
acts on $\pi_{2i} M_2 E_2$ by $(-1)^i$.  (3) follows from (1) and (2),
using the spectral sequence (\ref{eq:Gammahfpss}).
\end{proof}

Combining Corollary~\ref{cor:collapseisomorphism2} with
Lemma~\ref{lem:collapseisomorphism3}, we get

\begin{cor}\label{cor:collapseisomorphism4}
For $p \ge 5$, there are isomorphisms
$$ H^s(C(\ell)^\bullet/(p^\infty, v_1^\infty))_t \cong H^s(\Gamma_\ell,
\pi_t M_2 E_2)^{\Gal(\FF_p)}. $$
\end{cor}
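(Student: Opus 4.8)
The plan is to assemble the desired identification directly from the two chains of isomorphisms that the excerpt has already established, tracking indices carefully so that the weight-grading of modular forms matches the topological grading on the two sides. Concretely, Corollary~\ref{cor:collapseisomorphism2} identifies $\pi_t M_2 Q(\ell)$ with $H^s(C(\ell)^\bullet/(p^\infty, v_1^\infty))_{t+s}$ for $t = 4k - s$ and $0 \le s < 4$, while Lemma~\ref{lem:collapseisomorphism3}(3) identifies the same group $\pi_t M_2 Q(\ell)$ with $H^s(\Gamma_\ell, \pi_{t+s} M_2 E_2)^{\Gal(\FF_p)}$ for the same range of $(t,s)$. Composing these two isomorphisms gives, for every $t$ of the form $4k - s$ with $0 \le s < 4$,
\begin{equation*}
H^s(C(\ell)^\bullet/(p^\infty, v_1^\infty))_{t+s} \cong H^s(\Gamma_\ell, \pi_{t+s} M_2 E_2)^{\Gal(\FF_p)}.
\end{equation*}
Reindexing by setting the common degree to be $t' := t + s$ (so $t' = 4k$), this reads exactly as the claimed isomorphism $H^s(C(\ell)^\bullet/(p^\infty, v_1^\infty))_{t'} \cong H^s(\Gamma_\ell, \pi_{t'} M_2 E_2)^{\Gal(\FF_p)}$.

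The one point that needs care is that both families of isomorphisms are only literally asserted for degrees divisible by $4$; for $t' \not\equiv 0 \bmod 4$ both sides vanish, the left by the observation (used in proving Corollary~\ref{cor:collapseisomorphism1.5}) that $C(\ell)^\bullet_t$ is supported in degrees $t \equiv 0 \bmod 4$, hence so is its subquotient $C(\ell)^\bullet/(p^\infty,v_1^\infty)$, and the right by Lemma~\ref{lem:collapseisomorphism3}(2). So the asserted isomorphism holds trivially (both sides zero) outside the congruence class $t' \equiv 0 \bmod 4$, and holds by the composition above inside it. One should also note that both identifications arise as edge homomorphisms of the respective spectral sequences computing $\pi_* M_2 Q(\ell)$ — the Bousfield–Kan spectral sequence (\ref{eq:M2Qlss}) on one side, and the homotopy fixed point spectral sequence (\ref{eq:Gammahfpss}) on the other — and that the equivalence $M_2 Q(\ell) \simeq ((M_2 E_2)^{h\Gamma_\ell})^{h\Fr_p}$ coming from Theorem~\ref{thm:K2Ql} is what makes the two abutments the same graded group; this is already recorded in the discussion preceding the statement.

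I do not anticipate a genuine obstacle here: the corollary is purely a matter of concatenating isomorphisms whose construction is complete in the earlier sections, the only subtlety being bookkeeping with the shift $t \mapsto t+s$ and confirming the degreewise vanishing off $4\ZZ$ so that the stated isomorphism is unconditional in $t'$. If one wanted to be scrupulous, the mildly nontrivial step would be checking that the two spectral sequences collapse along the \emph{same} lines — but this is exactly the content of Corollary~\ref{cor:collapseisomorphism2} together with Lemma~\ref{lem:collapseisomorphism3}, so it may simply be cited.
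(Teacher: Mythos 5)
Your proposal is correct and is essentially identical to the paper's argument: the paper simply deduces the corollary by combining Corollary~\ref{cor:collapseisomorphism2} with Lemma~\ref{lem:collapseisomorphism3}, exactly the concatenation of isomorphisms (with the common abutment $\pi_*M_2Q(\ell)$ via Theorem~\ref{thm:K2Ql}) that you carry out. Your extra remarks about reindexing and the vanishing of both sides in degrees $t\not\equiv 0 \bmod 4$ just make explicit the bookkeeping the paper leaves implicit.
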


\begin{rmk}
One could give a purely algebraic proof of
Corollary~\ref{cor:collapseisomorphism4} which makes no reference to
topology.  In the context of the exposition of this paper it happens to be 
quicker
(but arguably less natural) to use topological constructions.
\end{rmk}

\section{$M_0Q(\ell)$}\label{sec:M0Ql}

Let
$p \ge 5$ and  
$\ell$ be a topological generator of $\ZZ_p^\times$.  
In this section we will concern ourselves with locating the non-trivial
homotopy of $M_0Q(\ell)$.

\begin{prop}\label{prop:torsion}
The groups
$$ H^s(C^\bullet(\ell))_{2t} $$
consist entirely if $p^j$-torsion if
$$ t \equiv 0 \mod (p-1)p^{j-1}, $$
and are zero if $t \not\equiv 0 \mod (p-1)$. 
\end{prop}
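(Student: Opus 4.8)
The plan is to extract this statement from the cosimplicial abelian group $C(\ell)^\bullet_{2t}$ together with the explicit formula for the coface maps from Proposition~\ref{prop:diffq}. Since $\ell$ is a topological generator of $\ZZ_p^\times$, the key arithmetic input will be that $\ell^t - 1$ has $p$-adic valuation controlled by $t$: namely, writing the Teichmüller-plus-$1+p\ZZ_p$ decomposition of $\ZZ_p^\times$, one has $\nu_p(\ell^t-1) = 0$ if $t \not\equiv 0 \bmod (p-1)$, and $\nu_p(\ell^t - 1) = 1 + \nu_p(t)$ if $(p-1) \mid t$. The cleanest route is first to treat $H^0$. A class in $H^0(C(\ell)^\bullet)_{2t}$ is a modular form $f \in (M_t)^0_{\ZZ_p}$ equalized by $d_0$ and $d_1$; by Proposition~\ref{prop:diffq} the two $(M_t)^0_{\ZZ_p}$-components give $f(q) = \ell^t f(q)$, i.e. $(\ell^t-1)f(q) = 0$ in $(M_t)^0_{\ZZ_p}$. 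Since $(M_t)^0_{\ZZ_p}$ is torsion-free, $H^0(C(\ell)^\bullet)_{2t}$ is already zero for $t \ne 0$, so that case is vacuously both "$p^j$-torsion" and the $t \not\equiv 0 \bmod (p-1)$ claim holds trivially for $H^0$.

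Next I would handle $H^1$ and $H^2$ by a filtration/Bockstein argument. The point is that $C(\ell)^\bullet$ is a complex of free $\ZZ_p$-modules in each cohomological degree, and the differential $d^0 = d_0 - d_1$ on the normalized complex sends $f \mapsto (\ell^t V_\ell f - f, (\ell^t - 1)f)$ (using $V_\ell f(q) = f(q^\ell)$). The strategy: show that after inverting $p$ the whole complex $C(\ell)^\bullet[p^{-1}]_{2t}$ is acyclic for $t \ne 0$ (so all cohomology is torsion), and then bound the torsion by examining the cokernel and kernel of the relevant maps mod $p^j$. For the acyclicity after inverting $p$: over $\QQ_p$ the moduli-theoretic interpretation identifies $C(\ell)^\bullet[p^{-1}]_{2t}$ with the (rational) cochain complex computing $\pi_{2t}M_0Q(\ell)$, and one expects $\pi_* M_0 Q(\ell)$ to be concentrated near $t = 0$ — but since Section~\ref{sec:M0Ql} is precisely where that is proved, I should avoid circularity and instead argue directly: the operator "$\ell^t V_\ell - 1$" acting on $(M_t)^0_{\QQ_p}$ is invertible for $t \ne 0$ because $V_\ell$ is "topologically nilpotent" on $q$-expansions in the sense that $\ord_q(V_\ell f) = \ell \cdot \ord_q f$ grows, combined with $\ell^t \ne 1$; a Neumann-series / triangularity argument on the $q$-adic filtration of $R((q))$ then shows $\ell^t V_\ell - 1$ is an isomorphism, forcing $H^*(C(\ell)^\bullet[p^{-1}])_{2t} = 0$. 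Hence all cohomology in weight $2t$, $t \ne 0$, is $p$-power torsion.

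Finally, to pin the torsion down to $p^j$-torsion when $t \equiv 0 \bmod (p-1)p^{j-1}$ and to zero when $t \not\equiv 0 \bmod (p-1)$: I would multiply the whole normalized cochain complex by the scalar $(\ell^t - 1)$ and show the resulting chain map is null-homotopic, using the contracting homotopy for $\ell^t V_\ell - 1$ constructed in the previous step — that homotopy has denominators exactly of the form $(\ell^t-1)^{-1}$ (from the geometric series), so $(\ell^t - 1) \cdot (\text{complex})$ admits an integral contracting homotopy, whence $(\ell^t-1)$ annihilates all cohomology. Then $\nu_p(\ell^t-1) = 0$ when $(p-1) \nmid t$ gives the vanishing statement, and $\nu_p(\ell^t - 1) = 1 + \nu_p(t) \le j$ when $(p-1)p^{j-1} \mid t$ gives the $p^j$-torsion bound. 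The main obstacle I anticipate is making the null-homotopy genuinely integral — one must check that the contracting homotopy for $\ell^t V_\ell - 1$, after clearing the single denominator $(\ell^t - 1)$, lands in modular forms over $\ZZ_p$ (not just meromorphic $q$-series), which requires knowing that the $q$-expansion principle and the boundedness of $\ord_q$ on $(M_t)^0$ keep the geometric-series correction inside $M_*(\Gamma_0(\ell))^0_{\ZZ_p}$; this is where the precise structure of the complex, rather than just formal nonsense, has to be used.
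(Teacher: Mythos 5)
Your reduction of the statement to ``$\ell^{t}-1$ annihilates $H^{s}(C(\ell)^\bullet)_{2t}$'' is exactly the right arithmetic skeleton, and your $H^0$ argument (torsion-freeness of $(M_t)^0_{\ZZ_p}$ plus the component $(\ell^t-1)f$ of $d_0-d_1$ from Proposition~\ref{prop:diffq}) is fine. The gap is in degrees $s=1,2$, where your plan rests on a contracting homotopy for ``$\ell^t V_\ell-1$'' built from the geometric series $\sum_n \ell^{nt}V_\ell^n$. That operator does not exist within the complex: $V_\ell$ raises level, so its iterates land in forms for $\Gamma_0(\ell^n)$ and leave the spaces $(M_t)^0$ and $M_t(\Gamma_0(\ell))^0$ entirely, and on $q$-expansions the series does not even converge on $M^0_t=M_t[\Delta^{-1}]$, since for $\ord_q f\le 0$ the constant terms contribute $\sum_n \ell^{nt}a_0$ (a $p$-adic unit times a divergent sum) and negative powers of $q$ proliferate without bound. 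Moreover, any null-homotopy of $(\ell^t-1)\cdot C(\ell)^\bullet_{2t}$ in degrees $1$ and $2$ requires the higher coface maps $C(\ell)^1_{2t}\to C(\ell)^2_{2t}$, which Proposition~\ref{prop:diffq} does not compute and which your argument never controls; so the claimed rational acyclicity and the integral annihilation statement in positive cohomological degrees are unproven (and proving rational acyclicity directly is essentially the content of Section~\ref{sec:M0Ql}, which the paper deduces \emph{from} this proposition, so the circularity you worried about is not actually avoided).

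The paper's proof needs none of this: it uses the central element $[\ell]\in GL_2(\QQ_\ell)$, which lies in $(K^{p,\ell}_0)_+$ and hence acts as the identity on $Q(\ell)\simeq\mc{V}^{(K^{p,\ell}_0)_+}$, while acting levelwise on the cosimplicial resolution $Q(\ell)^\bullet$ by multiplication by $\ell^k$ on $\pi_{2k}$ of each $\TMF$-factor. Since the Bousfield--Kan spectral sequence collapses with no extensions (Proposition~\ref{prop:collapse}), these two descriptions of the action on $H^s(C(\ell)^\bullet_{2k})$ must agree for every $s$ simultaneously, so $\ell^k-1$ acts as zero on all of $H^s$, and the valuation $\nu_p(\ell^k-1)$ then gives both the vanishing for $k\not\equiv 0 \bmod (p-1)$ and the $p^j$-torsion bound. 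If you want to salvage your approach, the missing idea is some such equivariance statement that kills all cohomological degrees at once; trying to manufacture it by hand from $q$-expansions and only the first coface maps will not work. (Minor point: your inequality ``$\nu_p(\ell^t-1)=1+\nu_p(t)\le j$ when $(p-1)p^{j-1}\mid t$'' goes the wrong way; the equality $\nu_p(\ell^t-1)=j$ holds when $t=(p-1)p^{j-1}s$ with $(s,p)=1$.)
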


\begin{proof}
Consider the central element
$$ [\ell] := 
\begin{bmatrix}
\ell & 0 \\
0 & \ell
\end{bmatrix} \in GL_2(\QQ_\ell).
$$
Let $\mc{V}$ be the smooth $GL_2(\AF^{p,\infty})$-spectrum of
(\ref{eq:Vdef}).  We assume that $\mc{V}$ is fibrant as a smooth
$GL_2(\AF^{p,\infty})$-spectrum, so that homotopy fixed points are
equivalent to point-set level fixed points
$$ \mc{V}^{hU} \simeq \mc{V}^U $$
for $U$ an open subgroup of $GL_2(\AF^{p,\infty})$
\cite[Cor.10.5.5]{BehrensLawson}.
Because $[\ell]$ is central, 
the action of $[\ell]$ on $\mc{V}$ is $GL_2(\AF^{p,\infty})$-equivariant.
Because $[\ell]$ is contained in the subgroup $(K^{p,\ell}_0)_+$, it acts
as the identity on 
$$ Q(\ell) \simeq \mc{V}^{(K^{p,\ell}_0)_+}. $$
However, the morphism $c$ of (\ref{eq:cmorphism}) is compatible with the
action of $[\ell]$, where we let $[\ell]$ act on $\ul{\Map}(\mc{B},
\mc{V})^{sm}$ through its action on the target $\mc{V}$.  We deduce that
the endomorphism $[\ell]$ acts on the cosimplicial object
$Q(\ell)^\bullet$, where the action is given level-wise on each factor 
by the endomorphism
$$ [\ell]: \TMF(K^p) \rightarrow \TMF(K^p) $$
(where $K^p$ is either $K^p_0$ or $K^p_0(\ell)$).
The endomorphism $[\ell]$ is the induced action of $[\ell]$ on the fixed
point spectrum
$$ \TMF(K^p) \simeq \mc{V}^{hK^p}. $$
The action of $[\ell]$ on the homotopy groups of 
$\TMF(K^p)$ is given 
by 
\begin{align*}
[\ell]: \pi_{2k}\TMF(\Gamma_0(N)) & \rightarrow \pi_{2k}\TMF(\Gamma_0(N)), \\
f & \mapsto \ell^k f.
\end{align*}
This is easily deduced from the fact that the induced quasi-isogeny
(\ref{eq:quasiisogeny})
$$ C_\eta \rightarrow C_{\eta \circ [\ell]} $$
is isomorphic to the $\ell$th power map of elliptic curves.
It follows that
$$ [\ell]: H^s(C(\ell)^\bullet_{2k}) \rightarrow
H^s(C(\ell)^\bullet_{2k}) $$
acts by multiplication by $\ell^k$.  However, since we have shown that
$[\ell]$ acts by the identity on $\pi_* Q(\ell)$,
Proposition~\ref{prop:collapse} implies that $[\ell]$ acts by the identity
on $H^s(C(\ell)^\bullet_{2k})$.  We deduce that multiplication by
$\ell^k-1$ is the zero homomorphism on $H^s(C(\ell)^\bullet_{2k})$.  Since
$\ell$ was assumed to be a topological generator of $\ZZ_p^\times$, the
proposition follows.
\end{proof}

We immediately deduce:

\begin{cor}
We have 
$$ H^s(C^\bullet(\ell)[p^{-1}])_t = 0 $$
for $t \ne 0$.
\end{cor}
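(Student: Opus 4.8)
The plan is to deduce the vanishing of $H^s(C^\bullet(\ell)[p^{-1}])_t$ for $t \neq 0$ as an immediate consequence of Proposition~\ref{prop:torsion}. First I would observe that inverting $p$ commutes with taking cohomology of the cosimplicial abelian group $C^\bullet(\ell)$, since localization is exact; thus $H^s(C^\bullet(\ell)[p^{-1}])_t \cong H^s(C^\bullet(\ell))_t \otimes_{\ZZ} \QQ$ (more precisely $\otimes_{\ZZ_p} \QQ_p$). Next I would split into two cases according to the congruence class of $t$. Recall that, by the discussion preceding Proposition~\ref{prop:collapse}, $C^\bullet(\ell)_t$ is nonzero only when $t \equiv 0 \bmod 4$; writing $t = 2s$ we are then in the situation of Proposition~\ref{prop:torsion}.

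If $t \not\equiv 0 \bmod (p-1)$ (in the relevant indexing, $s \not\equiv 0 \bmod (p-1)$), then Proposition~\ref{prop:torsion} already tells us $H^s(C^\bullet(\ell))_t = 0$, so its rationalization vanishes trivially. If instead $t \equiv 0 \bmod (p-1)$ but $t \neq 0$, then $s = t/2 \equiv 0 \bmod (p-1)p^{j-1}$ for $j$ the $p$-adic valuation appearing in Proposition~\ref{prop:torsion} (note $s \neq 0$), and Proposition~\ref{prop:torsion} asserts that $H^s(C^\bullet(\ell))_t$ is $p^j$-torsion. A finitely generated (indeed bounded-exponent) $p$-torsion abelian group has trivial rationalization, so again $H^s(C^\bullet(\ell)[p^{-1}])_t = 0$. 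Combining the two cases gives the corollary.

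There is essentially no obstacle here: the work has all been done in Proposition~\ref{prop:torsion}, and the corollary is just the observation that $p$-power torsion dies after inverting $p$. The only point requiring a word of care is the interchange of localization with the cohomology of the cosimplicial group, which holds because $\QQ_p$ is flat over $\ZZ_p$ and each term $C^\bullet(\ell)_t$ is a $\ZZ_p$-module, so the cohomology functor (a subquotient construction) commutes with $-\otimes_{\ZZ_p}\QQ_p$. One should also note the degenerate case $s=0$, $t=0$ is exactly the excluded value $t=0$, consistent with the statement.

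\begin{proof}
Since $\QQ_p$ is flat over $\ZZ_p$, the functor $(-)[p^{-1}] = (-)\otimes_{\ZZ_p}\QQ_p$ is exact, hence commutes with the formation of cohomology of the cosimplicial $\ZZ_p$-module $C^\bullet(\ell)_t$:
$$ H^s(C^\bullet(\ell)[p^{-1}])_t \cong H^s(C^\bullet(\ell))_t \otimes_{\ZZ_p} \QQ_p. $$
Recall that $C^\bullet(\ell)_t$, and hence $H^s(C^\bullet(\ell))_t$, vanishes unless $t \equiv 0 \bmod 4$; write $t = 2m$. If $m \not\equiv 0 \bmod (p-1)$, then $H^s(C^\bullet(\ell))_t = 0$ by Proposition~\ref{prop:torsion}, and the claim is trivial. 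Otherwise $m \equiv 0 \bmod (p-1)p^{j-1}$ for some $j \ge 1$, and since $t \ne 0$ we have $m \ne 0$; by Proposition~\ref{prop:torsion} the group $H^s(C^\bullet(\ell))_t$ is annihilated by $p^j$. A $p^j$-torsion abelian group has vanishing tensor product with $\QQ_p$, so $H^s(C^\bullet(\ell)[p^{-1}])_t = 0$ in this case as well.
\end{proof}
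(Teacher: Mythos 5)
Your proof is correct and follows the same route the paper intends: the corollary is stated as an immediate consequence of Proposition~\ref{prop:torsion}, since for $t \neq 0$ the groups $H^s(C^\bullet(\ell))_t$ are bounded $p$-power torsion (or zero when $t \not\equiv 0 \bmod 2(p-1)$), hence die after inverting $p$. Your extra remarks on exactness of $-\otimes_{\ZZ_p}\QQ_p$ commuting with cohomology just make explicit what the paper leaves implicit.
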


We can be more specific in the case of $s = 0$.

\begin{lem}
We have 
$$ H^0(C^\bullet(\ell)[p^{-1}])_0 = \QQ_p. $$
\end{lem}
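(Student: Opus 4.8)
The plan is to compute $H^0(C^\bullet(\ell)[p^{-1}])_0$ directly as the equalizer of the two initial coface maps on the weight-$0$ part of the cosimplicial abelian group, after inverting $p$. By definition $C(\ell)^\bullet_0$ begins with $(M_0)^0_{\ZZ_p} \to M_0(\Gamma_0(\ell))^0_{\ZZ_p} \times (M_0)^0_{\ZZ_p}$, so after inverting $p$ the group $H^0$ is the set of $f \in (M_0)^0_{\QQ_p}$ with $d_0(f) = d_1(f)$. Here $(M_0)^0_{\QQ_p} = (M_0)_{\QQ_p}[\Delta^{-1}]$ is the ring of weight-$0$ meromorphic-at-$\infty$ modular forms over $\QQ_p$, which is the ring of modular functions for $SL_2(\ZZ)$ regular away from the cusp; concretely it is $\QQ_p[j]$ where $j = E_4^3/\Delta$ is the $j$-invariant.

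First I would invoke Proposition~\ref{prop:diffq}: on $q$-expansions, in weight $2k = 0$ the maps are $d_0(f(q)) = (f(q^\ell), f(q))$ and $d_1(f(q)) = (f(q), f(q))$, since $\ell^k = \ell^0 = 1$. Thus the equalizer condition $d_0(f) = d_1(f)$ reduces, on the $M_0(\Gamma_0(\ell))^0$-component, to the single equation $f(q^\ell) = f(q)$ (the $(M_0)^0$-component gives no condition, as $f(q) = f(q)$ automatically). So $H^0(C^\bullet(\ell)[p^{-1}])_0 = \{ f \in (M_0)^0_{\QQ_p} : f(q^\ell) = f(q)\ \text{in}\ M_0(\Gamma_0(\ell))^0_{\QQ_p}\}$.

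The key step is then to show this fixed-point set is exactly the constants $\QQ_p$. Since $V_\ell$ is injective on $q$-expansions and $(M_0)^0_{\QQ_p} \hookrightarrow \QQ_p((q))$, the condition $f(q^\ell) = f(q)$ is an identity of Laurent series. Writing $f$ as a Laurent polynomial in $j$, one sees the $q$-expansion of $f$ is determined by finitely many coefficients; comparing orders of vanishing at $\infty$, if $f$ is nonconstant then $\ord_q f(q^\ell) = \ell \cdot \ord_q f(q) \ne \ord_q f(q)$ unless $\ord_q f(q) = 0$, and then comparing further coefficients forces $f$ to be constant — equivalently, $V_\ell f = f$ forces $f$ to be invariant under the $q \mapsto q^\ell$ substitution, and the only such rational functions of $j$ are scalars, because $j$ itself goes to a genuinely different modular function of level $\ell$. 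I would phrase this cleanly by noting that a weight-$0$ meromorphic modular form fixed by $V_\ell$ would be a modular function on $X_0(\ell)$ pulled back from $X(1)$ via both degeneracy maps in the same way, hence constant on the (connected) modular curve. Finally $\QQ_p \subset (M_0)^0_{\QQ_p}$ does satisfy the condition (constants are fixed by $V_\ell$), so $H^0 = \QQ_p$ exactly.

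The main obstacle is making the last "fixed points of $V_\ell$ are constants" step rigorous without hand-waving: one must be careful that $M_0(\Gamma_0(\ell))^0_{\QQ_p}$ really does embed in $\QQ_p((q))$ (which it does, by the $q$-expansion principle recalled early in the paper) and that the two pullbacks of $j$ to level $\ell$ are distinct. This is a standard fact about the modular curve $X_0(\ell)$ — the function field extension $\QQ(X_0(\ell))/\QQ(j)$ has degree $\ell+1 > 1$ — but I would want to cite it or give the short $q$-expansion argument comparing leading terms, which suffices and stays within the elementary framework already set up in Section~\ref{sec:coface}.
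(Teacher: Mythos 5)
Your proposal is correct and follows essentially the same route as the paper: reduce via Proposition~\ref{prop:diffq} to the single condition $f(q^\ell) = f(q)$ in weight $0$ (where the factors $\ell^k$ are $1$), and then conclude from the $q$-expansion (allowing the finitely many negative-order terms coming from $\Delta^{-1}$) that $f$ must be the constant term, so the kernel is exactly $\QQ_p$. The paper phrases the last step as the coefficient recursion $a_n = a_{n/\ell}$ for $\ell \mid n$ and $a_n = 0$ otherwise, which is just a cleaner form of your order-of-vanishing and subtract-the-constant argument; the alternative geometric remark about $X_0(\ell)$ is unnecessary.
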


\begin{proof}
We must analyze the kernel of the cosimplicial differential
$$ d_0 - d_1 : (M_0)^0_{\QQ_p} \rightarrow
M_0(\Gamma_0(\ell))_{\QQ_p}^0 \oplus (M_0)_{\QQ_p}^0.
$$
We claim that is is given by the subspace generated by $1 \in
(M_0)_{\QQ_p}$.  Indeed, suppose that $f \in (M_0)_{\QQ_p}^0$
satisfies
$$ d_0(f) - d_1(f) = 0. $$
By Proposition~\ref{prop:diffq}, it follows that
$$
f(q^\ell) - f(q) = 0.
$$
Writing $f(q) = \sum a_n q^n$, we find
$$ a_n = 
\begin{cases}
a_{n/\ell}, & n \equiv 0 \mod \ell \\
0, & n \not\equiv 0 \mod \ell.
\end{cases}
$$
It follows by induction that $f(q) = a_0$.
\end{proof}

Applying this knowledge to the spectral sequence (\ref{eq:M0Qlss}), we
deduce:

\begin{cor}\label{cor:M0Ql}
We have
$$ \pi_t(M_0Q(\ell)) = 0 $$
if $t \not\in \{ 0, -1, -2\}$, and
$$ \pi_0(M_0Q(\ell)) = \QQ_p. $$
\end{cor}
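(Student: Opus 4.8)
The plan is to combine the two computations we have just made about the cochain complex $C^\bullet(\ell)$ with the spectral sequence (\ref{eq:M0Qlss}). By Corollary~\ref{cor:collapseisomorphism2}, we have isomorphisms
$$ \pi_t M_0 Q(\ell) \cong H^s(C(\ell)^\bullet[p^{-1}])_{t+s} $$
where $t = 4k-s$ and $0 \le s < 4$. So to determine $\pi_t M_0 Q(\ell)$ it suffices to know the groups $H^s(C(\ell)^\bullet[p^{-1}])_u$ for $0 \le s < 4$ and all $u$. First I would invoke the corollary to Proposition~\ref{prop:torsion}: rationally, $H^s(C^\bullet(\ell)[p^{-1}])_u = 0$ unless $u = 0$. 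In particular, for $s = 3$ the group vanishes in all degrees, and for $s = 0,1,2$ it can be non-zero only when $u = 0$, i.e.\ only when $t + s = 0$, i.e.\ $t = -s$. This already forces $\pi_t M_0 Q(\ell)$ to vanish outside $t \in \{0,-1,-2\}$ (the value $t = -3$ would require $s = 3$, which is excluded).

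Next I would pin down $\pi_0 M_0 Q(\ell)$. From the isomorphism above with $t = 0$ we get $\pi_0 M_0 Q(\ell) \cong H^0(C(\ell)^\bullet[p^{-1}])_0$, and the Lemma immediately preceding this corollary identifies that group with $\QQ_p$. That gives the second assertion. For $t = -1$ and $t = -2$ the corollary only tells us the groups are \emph{possibly} non-zero (coming from $H^1(C(\ell)^\bullet[p^{-1}])_0$ and $H^2(C(\ell)^\bullet[p^{-1}])_0$ respectively), but the statement of Corollary~\ref{cor:M0Ql} as written does not claim they vanish — it only asserts vanishing outside $\{0,-1,-2\}$ and the value at $0$. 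So no further work is needed there.

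I expect the only subtlety to be bookkeeping the indices in Corollary~\ref{cor:collapseisomorphism2} correctly — making sure that the constraint $0 \le s < 4$ together with the rational vanishing of $H^s$ outside internal degree $0$ really does kill everything except $t \in \{0,-1,-2\}$, and in particular that there is no contribution in degree $t = -3$. This is immediate once one writes it out: $t = -3$ with $0 \le s < 4$ forces $s = 3$ (so that $t + s = 0$), but $H^3 = 0$ by dimension reasons already folded into the corollary (the complex $C(\ell)^\bullet$ has length $2$). So the proof is essentially a three-line deduction assembling the preceding lemma and corollary, with no real obstacle.
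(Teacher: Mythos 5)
Your argument is correct and is essentially the paper's own proof: the paper deduces the corollary by feeding the rational vanishing of $H^s(C^\bullet(\ell)[p^{-1}])_t$ for $t \ne 0$ and the computation $H^0(C^\bullet(\ell)[p^{-1}])_0 = \QQ_p$ into the (collapsing) spectral sequence (\ref{eq:M0Qlss}), which is exactly what you do via Corollary~\ref{cor:collapseisomorphism2}, including the observation that $t=-3$ would need $s=3$ where the length-two complex has no cohomology.
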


\section{$M_1Q(\ell)$: Eisenstein series and the
$\alpha$-family}\label{sec:M1Ql}

Let $p \ge 5$ and assume that $\ell$ is a topological generator of
$\ZZ_p^\times$.
In this section we will compute
$$ H^0(C^\bullet(\ell)/p^\infty), $$
the $0$th cohomology of the cochain complex associated to the cosimplicial
abelian group $C^\bullet(\ell)$ tensored with the group $\ZZ/p^\infty$.
These computations will allow us to determine part of the $1$-line of the
chromatic spectral sequence for $Q(\ell)$.

We have
$$ H^0(C^\bullet(\ell)/p^\infty) = \varinjlim_j H^0(C^\bullet(\ell)/p^j), $$
so it suffices to compute the latter.  Our explicit determination of the
first differential in $C^\bullet(\ell)$ implies that
\begin{align}
\mc{A}_{t/j} & := H^0(C^\bullet(\ell)/p^j)_{2t} \\
& = \left\{ f \in (M_t)_{\ZZ/p^j}^0 \: : \: 
\begin{array}{l}
\mr{(i)} \: (\ell^t - 1)f(q) \equiv 0 \mod p^j, \\
\mr{(ii)} \: \ell^t f(q^\ell) - f(q) \equiv 0 \mod p^j
\end{array}
\right\}
\label{eq:mcA}
\end{align}

\begin{lem}\label{lem:mcA}
A modular form 
$f \in M^0_t$ represents an element of the group $\mc{A}_{t/j}$ 
if and only if 
\begin{enumerate}
\item $p^i f \equiv 0 \mod p^j$ for $t = (p-1)p^{i-1}s$, and $(s,p) = 1$,
\item $f(q) \equiv a \mod p^j$ for $a \in \ZZ/p^j$.
\end{enumerate}
\end{lem}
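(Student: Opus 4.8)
The plan is to reduce both of the defining conditions of $\mc{A}_{t/j}$ to statements about the $q$-expansion $f(q)=\sum_m a_m q^m\in(\ZZ/p^j)((q))$ and to exploit the hypothesis that $\ell$ topologically generates $\ZZ_p^\times$. First I would dispose of the case $t\not\equiv 0\bmod(p-1)$: then $\ell^t-1$ is a unit in $\ZZ_p$, so condition (i) forces $f\equiv 0\bmod p^j$ and everything is trivial. So assume $(p-1)\mid t$ and write $t=(p-1)p^{i-1}s$ with $(s,p)=1$. The one piece of real arithmetic is the valuation identity $\nu_p(\ell^t-1)=i$: it follows from the fact that $\ell$ generates $(\ZZ/p^2)^\times$ (hence $\ell^{p-1}\not\equiv1\bmod p^2$, i.e.\ $\nu_p(\ell^{p-1}-1)=1$) together with the standard lifting-the-exponent formula $\nu_p(\ell^{p^{i-1}s(p-1)}-1)=\nu_p(\ell^{p-1}-1)+\nu_p(p^{i-1}s)=1+(i-1)$. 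Since $\ell^t-1$ is then $p^i$ times a unit, condition (i) (``$(\ell^t-1)f\equiv0\bmod p^j$'') is equivalent to condition (1) (``$p^i f\equiv0\bmod p^j$''), with no reference to (ii). This already settles the equivalence of (i) and (1).

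It remains, for the direction ``$f$ represents a class in $\mc{A}_{t/j}$ $\Rightarrow$ (1) and (2)'', to deduce (2) from (ii), and for the converse to deduce (ii) from (1) and (2). For the former I would iterate condition (ii): the congruence $\ell^t f(q^\ell)\equiv f(q)\bmod p^j$ lives in $(\ZZ/p^j)((q))$, and applying the ring endomorphism $q\mapsto q^\ell$ repeatedly gives $\ell^{nt}f(q^{\ell^n})\equiv f(q)\bmod p^j$ for every $n\ge0$. The power series $\ell^{nt}f(q^{\ell^n})$ only involves exponents divisible by $\ell^n$, so for a fixed $k\ne0$, choosing $n$ with $\ell^n>\abs{k}$ forces the coefficient of $q^k$ on the left to vanish, whence $a_k\equiv0\bmod p^j$. (This handles the finitely many negative exponents as well, since $f\in(M_t)^0$ is only meromorphic at $\infty$.) Therefore $f(q)\equiv a_0\bmod p^j$, which is condition (2) — note this used only (ii).

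For the converse, suppose $f$ satisfies (1) and (2). Writing $f(q)\equiv a_0\bmod p^j$, condition (1), read off on the constant term via the $q$-expansion principle, says $p^i a_0\equiv0\bmod p^j$. Then $f(q^\ell)\equiv a_0\equiv f(q)\bmod p^j$, so $\ell^t f(q^\ell)-f(q)\equiv(\ell^t-1)a_0\bmod p^j$, and this vanishes mod $p^j$ because $\ell^t-1=p^i\cdot(\text{unit})$ and $p^i a_0\equiv0$; this is condition (ii). Condition (i) is again condition (1). Hence $f$ represents a class in $\mc{A}_{t/j}$, completing the equivalence.

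There is no serious obstacle here: the only step with arithmetic content is the valuation identity $\nu_p(\ell^t-1)=i$, and the rest is formal manipulation of $q$-expansions. The one point that deserves care is the passage between ``$p^i f\equiv0$ as a modular form over $\ZZ/p^j$'' and ``$p^i$ annihilates the constant term of $f(q)\bmod p^j$'', which is precisely where the injectivity of $(M_t)^0_{\ZZ/p^j}\hookrightarrow(\ZZ/p^j)((q))$ — the $q$-expansion principle already invoked in the paper (``a modular form over $R$ is determined by its weight and its $q$-expansion'') — is used.
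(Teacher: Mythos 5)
Your proof is correct and follows essentially the same route as the paper: the key input is the valuation $\nu_p(\ell^t-1)=i$ (which identifies condition (i) with (1)), followed by a coefficientwise analysis of the congruence (ii) on $q$-expansions to force $f(q)\equiv a_0 \bmod p^j$. Your only variations are cosmetic — you iterate (ii) via $q\mapsto q^\ell$ rather than first using (i) to reduce it to $f(q^\ell)\equiv f(q)$ and inducting on coefficients, and you spell out the (easy) converse direction that the paper leaves implicit.
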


\begin{proof}
Since $\ell$ was assumed to be
a topological generator of $\ZZ_p$, 
$$ \nu_p(\ell^t-1) = i $$
for $t = (p-1)p^{i-1}s$, with $(s,p) = 1$.  
Condition (i) of (\ref{eq:mcA})
states that 
$$ (\ell^t - 1)f \equiv 0 \mod p^j. $$
This proves (1).

Because $\ell^t f(q) \equiv f(q) \mod p^j$, we deduce that 
condition (ii) of (\ref{eq:mcA}) may be rewritten as
$$ f(q^\ell) \equiv f(q) \mod p^j. $$
But, writing 
$$ f(q) = \sum_{n} a_n q^n $$
for $a_n \in \ZZ/p^j$, we see that
$$
a_n \equiv
\begin{cases}
0, & n \equiv 0 \mod \ell, \\
a_{n/\ell}, & n \not\equiv 0 \mod \ell.
\end{cases}
$$
Therefore, we inductively deduce that $a_n \equiv 0 \mod p^j$ unless $n =
0$.
\end{proof}

Let $E_k \in M_{k}$ denote the weight $k$ normalized Eisenstein series (for
$k \ge 4$ even), with
$q$-expansion
\begin{equation}
E_k(q) = 1 - \frac{2k}{B_k} \sum_{i = 1}^\infty \sigma_{k-1}(i)q^i \in
\QQ[[q]],
\end{equation}
where
$$
\sigma_k(i) := \sum_{d\vert i} d^k.
$$

The following lemma follows immediately from the Clausen-von Staudt Theorem on
denominators of Bernoulli numbers.

\begin{lem}\label{lem:vonStaudt}
If $p-1$ divides
$k$, the $q$-expansion of $E_k$ is $p$-integral.
For $k \equiv 0 \mod (p-1)p^{j-1}$ we have
$$ E_k(q) \equiv 1 \mod p^{j}. $$
\end{lem}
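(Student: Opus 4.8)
The plan is to reduce the statement to the Clausen--von Staudt theorem on the denominators of Bernoulli numbers, exactly as the lemma asserts. First I would recall the precise content of Clausen--von Staudt: for even $k \ge 2$, the Bernoulli number $B_k$ satisfies $B_k + \sum_{(q-1)\mid k} \frac{1}{q} \in \ZZ$, where the sum ranges over primes $q$ with $q-1$ dividing $k$. The upshot I need is twofold. On one hand, if $p-1$ divides $k$, then $p$ divides the denominator of $B_k$ exactly to the first power; equivalently $p B_k$ is a $p$-adic unit times a unit, so $p B_k \in \ZZ_p^\times$ up to the $2/B_k$ bookkeeping. On the other hand, if $p-1$ does not divide $k$, then $p$ does not divide the denominator of $B_k$ at all, i.e.\ $B_k \in \ZZ_{(p)}$.

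Next I would turn to the $q$-expansion $E_k(q) = 1 - \frac{2k}{B_k}\sum_{i\ge 1}\sigma_{k-1}(i)q^i$. The coefficients other than the constant term all carry the factor $\frac{2k}{B_k}$, and $\sigma_{k-1}(i) \in \ZZ$, so $p$-integrality of $E_k$ is equivalent to $p$-integrality of $\frac{2k}{B_k}$. When $p-1 \mid k$, write $k = (p-1)p^{j-1}s$ with $(s,p)=1$ for the appropriate $j \ge 1$ (note every $k$ divisible by $p-1$ has such a form, allowing $j$ to vary). Then $\nu_p(k) = j-1$, while $\nu_p(B_k) = -1$ by Clausen--von Staudt, so $\nu_p\!\left(\frac{2k}{B_k}\right) = (j-1) - (-1) = j$ (using $p \ge 5$ so $\nu_p(2)=0$). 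Hence $\frac{2k}{B_k} \equiv 0 \bmod p^j$, which gives both $p$-integrality and the congruence $E_k(q) \equiv 1 \bmod p^j$, since every non-constant coefficient is divisible by $p^j$ and the constant term is $1$.

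For the first sentence of the lemma in the generality stated --- ``if $p-1$ divides $k$, the $q$-expansion of $E_k$ is $p$-integral'' --- the computation above with $j \ge 1$ already suffices, since $\nu_p(\frac{2k}{B_k}) = j \ge 1 > 0$ forces integrality. I would also remark that the case $k \equiv 0 \bmod (p-1)p^{j-1}$ includes all larger multiples, so the statement $E_k \equiv 1 \bmod p^j$ is really the assertion $\nu_p(\frac{2k}{B_k}) \ge j$, which follows from $\nu_p(k) \ge j-1$ and $\nu_p(B_k) = -1$.

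There is no serious obstacle here: the only point requiring a little care is making sure the exponent bookkeeping is right and that one is invoking the sharp form of Clausen--von Staudt (namely that $p$ divides the denominator of $B_k$ \emph{exactly once} when $p-1 \mid k$, not merely that it may divide it), and recalling $p \ge 5$ to discard the factor of $2$. Everything else is a direct substitution into the Eisenstein $q$-expansion.
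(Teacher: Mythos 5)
Your proof is correct and follows exactly the route the paper intends: the paper simply cites the Clausen--von Staudt theorem, and your valuation computation $\nu_p(2k/B_k)=\nu_p(k)-\nu_p(B_k)\ge (j-1)+1=j$ (using $\nu_p(B_k)=-1$ when $p-1\mid k$ and $p\ge 5$) is precisely the "immediate" argument being invoked. The exponent bookkeeping and the reduction of both assertions to the divisibility of the non-constant coefficients by $\frac{2k}{B_k}$ are handled correctly.
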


\begin{lem}\label{lem:ek}
For each even weight $k \ge 4$ there exists a modular form
$$ e_k \in (M_k)^0_{\ZZ_{(p)}} $$
such that
\begin{enumerate}
\item if $k \equiv 0 \mod p-1$, we have 
$$ e_k = E_k, $$
\item the $q$-expansion of $e_k$ satisfies
$$ e_k(q) = 1 + \text{higher terms}, $$
\item if $k_1 \equiv k_2 \mod (p-1)p^{j-1}$, then
$$ e_{k_1}(q) \equiv e_{k_2}(q) \mod p^j. $$ 
\end{enumerate}
\end{lem}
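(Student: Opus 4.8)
The plan is to construct $e_k$ by correcting a suitable power of $E_{p-1}$ (or a low-weight Eisenstein series) by a multiple of the discriminant $\Delta$, using the classical theory of $p$-adic modular forms and congruences between Eisenstein series. Write $k = (p-1)m + r$ where $0 \le r < p-1$; since $k$ is even and $p \ge 5$, $r$ is even. If $r = 0$ then $k \equiv 0 \bmod (p-1)$ and we simply set $e_k = E_k$, which satisfies (1) trivially, (2) by the normalization of the Eisenstein series, and (3) by Lemma~\ref{lem:vonStaudt} (two weights congruent mod $(p-1)p^{j-1}$ that are both divisible by $p-1$ have Eisenstein series congruent to $1$, hence to each other, mod $p^j$). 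So the content is the case $r \ne 0$, where there is no level-one holomorphic Eisenstein series of weight $k$ with constant term $1$ that is $p$-integral in the naive sense; instead one works in $(M_k)^0_{\ZZ_{(p)}} = (M_k)_{\ZZ_{(p)}}[\Delta^{-1}]$, which is why the superscript $0$ appears in the statement.

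The key steps I would carry out: First, fix a weight-$r$ meromorphic-at-$\infty$ modular form $\phi_r \in (M_r)^0_{\ZZ_{(p)}}$ with $q$-expansion $\phi_r(q) = 1 + \text{higher}$ — for instance a monomial in $E_4, E_6$ divided by an appropriate power of $\Delta$ to land in weight $r$; since $4$ and $6$ generate all even weights additively once we allow negative powers of $\Delta$ (weight $12$), such a form exists, and its $q$-expansion is $p$-integral with leading coefficient $1$. Then set tentatively $e_k := E_{p-1}^m \cdot \phi_r$, which has weight $k$, lies in $(M_k)^0_{\ZZ_{(p)}}$, and has $q$-expansion $1 + \text{higher}$ by Lemma~\ref{lem:vonStaudt} applied to $E_{p-1}$ together with the normalization of $\phi_r$; this gives (2), and (1) is vacuous here. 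For (3), suppose $k_1 \equiv k_2 \bmod (p-1)p^{j-1}$; then $r_1 = r_2 =: r$ (they agree mod $p-1$, both in $[0,p-1)$), so $\phi_{r_1} = \phi_{r_2}$, and $m_1 \equiv m_2 \bmod p^{j-1}$. Now invoke Lemma~\ref{lem:vonStaudt}: $E_{p-1}^{p^{j-1}} \equiv 1 \bmod p^j$ because $(p-1)p^{j-1} \equiv 0 \bmod (p-1)p^{j-1}$, so $E_{p-1}^{m_1} \equiv E_{p-1}^{m_2} \bmod p^j$, whence $e_{k_1}(q) \equiv e_{k_2}(q) \bmod p^j$.

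The one point requiring care — and the place I expect the main subtlety — is reconciling clauses (1) and (3) simultaneously: when $r = 0$ we are forced to take $e_k = E_k$, but the recipe $E_{p-1}^m \cdot \phi_0 = E_{p-1}^m$ (with $\phi_0 = 1$) gives a potentially different form, and (3) must hold across the boundary between the $r = 0$ and $r \ne 0$ cases as well as within each. The resolution is exactly the Clausen–von Staudt input packaged in Lemma~\ref{lem:vonStaudt}: $E_k \equiv E_{p-1}^{k/(p-1)} \bmod p$ for $(p-1) \mid k$ (both reduce to the Hasse invariant raised to the power $k/(p-1)$), and more precisely $E_k \equiv E_{p-1}^{m} \bmod p^j$ whenever $k \equiv (p-1)m \bmod (p-1)p^{j-1}$, since both sides are $\equiv 1$. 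Thus the two definitions agree mod $p^j$ to the required order, and choosing $e_k = E_k$ when $(p-1)\mid k$ and $e_k = E_{p-1}^m \phi_r$ otherwise yields a globally consistent system. I would also remark that $\phi_r$ can be pinned down once and for all for each residue $r$ (finitely many choices), so no coherence issue arises from the auxiliary choice. The verification of (1)–(3) is then the routine bookkeeping sketched above, with Lemma~\ref{lem:vonStaudt} doing all the arithmetic heavy lifting.
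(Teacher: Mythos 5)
Your plan coincides in outline with the paper's proof (reduce $k$ modulo $p-1$ to a fixed low-weight form with constant term $1$, times an Eisenstein factor congruent to $1$), and your choice of the factor $E_{p-1}^m$ is in fact the more robust one: within a fixed nonzero residue class, condition (3) follows cleanly from $E_{p-1}^{p^{j-1}}\equiv 1 \pmod{p^j}$, whereas the paper multiplies by the single series $E_{(p-1)t}$ of varying weight. (Also, the ``boundary'' you worry about never arises: weights congruent mod $(p-1)p^{j-1}$ have the same residue mod $p-1$, so (3) only ever compares weights in the same class.) However, your construction of $\phi_r$ breaks down at the one residue where care is genuinely needed, namely $r=2$, i.e.\ $k\equiv 2 \bmod p-1$. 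There is no element of $(M_2)^0_{\ZZ_{(p)}}$ with $q$-expansion $1+\text{higher terms}$: such a form would be holomorphic at $\infty$ of weight $2$ and level one, hence zero. A monomial in $E_4,E_6$ divided by a power of $\Delta$ has $q$-expansion beginning with a negative power of $q$ (its leading coefficient $1$ sits in front of $q^{-c}$, not $q^0$), so condition (2) fails, and such $e_k$ could not serve in the basis of Lemma~\ref{lem:basis}. The paper avoids exactly this by sending weights $k\equiv 2\bmod p-1$ to residual weight $p+1$ rather than $2$; the same patch works verbatim in your setup (use $E_{p-1}^{m-1}\phi_{p+1}$ with $\phi_{p+1}$ a monomial in $E_4$, $E_6$).

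The second gap is in the class $(p-1)\mid k$, where (1) forces $e_k=E_k$ and you must show $E_{k_1}\equiv E_{k_2}\pmod{p^j}$ whenever $k_1 \equiv k_2 \pmod{(p-1)p^{j-1}}$. Your justification --- $E_k\equiv E_{p-1}^{k/(p-1)}\pmod{p^j}$ ``since both sides are $\equiv 1$'' --- is not what Lemma~\ref{lem:vonStaudt} gives: it yields $E_k\equiv 1$ only mod $p^{\nu_p(k)+1}$, and congruence of the two weights mod $(p-1)p^{j-1}$ does not make either weight divisible by $(p-1)p^{j-1}$. The asserted bridge is false: at $p=5$ one has $E_4^3-E_{12}=\tfrac{432000}{691}\Delta$, so $E_{12}\equiv E_4^3$ only mod $5^3$, although your criterion (here $k=(p-1)m$ exactly, so the weight hypothesis holds for every $j$) would assert it mod $5^j$ for all $j$. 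Worse, the congruence you need cannot be rescued by citing a sharper classical result, because it genuinely fails for $j$ large relative to the smaller weight: $4\equiv 2504\pmod{4\cdot 5^4}$, yet $E_4\not\equiv E_{2504}\pmod{5^5}$ --- if the $q$-coefficients agreed mod $5^5$, then since $\sigma_{2503}(5)\equiv 1$ the $q^5$-coefficients would differ by $240\cdot 125$, which is nonzero mod $5^5$. So conditions (1) and (3) are in real tension for large $j$; be aware that the paper's own proof disposes of this case with the same one-line appeal to Lemma~\ref{lem:vonStaudt}, so the difficulty you sensed lies in the statement rather than in a missing trick on your part. Note that the congruence actually used downstream, in Lemma~\ref{lem:splitting}, is $e_aE_{p-1}^j\equiv e_{a+j(p-1)}$, which your recipe satisfies on the nose in the classes $r\neq 0$; it is only the insistence that $e_k=E_k$ on the class $r=0$ that creates the problem.
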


\begin{proof}
Observe that for any even $k \ge 4$, there exist modular forms $e_k$
satisfying condition (2) (one can simply take $e_k = E_4^i E_6^j$ for
appropriate $i$ and $j$).  Fix such choices of $e_k$ for even $k$
satisfying $4 \le k < p-1$
and $k = p + 1$.  Also set $e_0 = 1$.  For even $k \ge p-1$ satisfying $k
\ne p+1$ set
$$ 
e_k = e_{k-(p-1)t} E_{(p-1)t} 
$$
for $t$ chosed such that
$$ 0 \le k - (p-1)t < p-1 \quad \text{or} \quad k - (p-1)t = p+1. $$
Then condition (1) is obviously satisfied, and condition (3) is satisfied
by Lemma~\ref{lem:vonStaudt}. 
\end{proof}

The following lemma provides a convenient basis for $p$-integral modular
forms which we shall make frequent use of.

\begin{lem}\label{lem:basis}
The forms
$$
\{ \Delta^k e_{t-12k} \: : \: k \in \ZZ, t-12k \ge 4 \: \text{and even} \},
$$
together with
$$ \Delta^k \qquad \text{if $t = 12k$}, $$
form an integral basis of $M_t^0$.
\end{lem}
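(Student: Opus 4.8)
The plan is to proceed by induction on the weight $t$, using the graded-ring structure of $M_* = \ZZ[E_4, E_6, \Delta]/(\text{relation})$ more precisely the classical fact that $M_*$ is a free module over $\ZZ[\Delta]$ on the monomials $E_4^a E_6^b$ with $4a+6b < 12$, together with the elementary observation that $\Delta^{-1}$ simply shifts weight by $-12$, so that $M_t^0 = \bigcup_{k \ge 0} \Delta^{-k} M_{t+12k}$. First I would reduce to showing that the proposed set is an integral basis of $M_t^0$ in each residue class of $t$ mod $12$; the ``together with $\Delta^k$ if $t = 12k$'' clause just accounts for the fact that the unique weight-$0$ generator is $e_0 = 1$ rather than an $e_{t-12k}$ with $t - 12k \ge 4$. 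The key point is that for each $t$ with $t \ge 4$ even there is exactly one way to write $t = 12k + r$ with $r \in \{0,4,6,8,10,14\}$ (the set of even residues $\ge 4$ of weights of $\ZZ$-module generators of $M_*$ over $\ZZ[\Delta]$, with $0$ thrown in), and the classical structure theorem says $\Delta^k E_4^{a}E_6^{b}$ with $4a+6b=r$ and the exponents minimal is an integral basis element.

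The main step is then to verify that replacing the classical monomial basis element (the minimal-degree $E_4^a E_6^b$ in each weight, or $1$ in weight $0$) by $e_r$ still yields an integral basis. This is because, by Lemma~\ref{lem:ek}(2), $e_r(q) = 1 + (\text{higher order terms})$, and by construction $e_r \in (M_r)^0_{\ZZ_{(p)}}$; but in fact one checks $e_r$ can be chosen in $(M_r)_{\ZZ}$ since the $e_k$ for small $k$ are honest integral monomials in $E_4, E_6$ and the multiplication by $E_{(p-1)t}$ is only needed to control $p$-adic congruences, not integrality at other primes — alternatively, one works one prime at a time and only claims a $\ZZ_{(p)}$-basis, which is all that is used later. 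The change of basis from $\{\Delta^k E_4^a E_6^b\}$ to $\{\Delta^k e_{t-12k}\}$ is block upper-triangular with respect to the filtration by $q$-order (equivalently, by powers of $\Delta$ dividing a form): $e_{t-12k}$ has the same ``leading coefficient'' $1$ as the corresponding monomial, so the transition matrix is unipotent, hence invertible over $\ZZ$ (or $\ZZ_{(p)}$). Concretely, $\ord_q(\Delta^k e_{t-12k}) = k$, these orders are distinct for distinct $k$, and a form $f \in M_t^0$ with $f(q) = \sum_{n \ge -m} a_n q^n$ can be stripped of its lowest term by subtracting an integer multiple of $\Delta^{-m} e_{t+12m}$, then $\Delta^{-m+1} e_{t+12(m-1)}$, and so on, terminating because $M_t^0$ has bounded-below $q$-order exactly governed by these powers of $\Delta$.

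I expect the main obstacle to be bookkeeping the two exceptional low weights cleanly: the residues $r = 0$ (handled by the separate ``$\Delta^k$'' clause, using $e_0 = 1$) and $r = 14 = p+1$ when $p = 13$, or more generally making sure that for \emph{every} even $t \ge 4$ one of the generating weights $r \in \{4,6,8,10,14\}$ (or $0$) is hit and the corresponding $e_r$ was actually fixed in the proof of Lemma~\ref{lem:ek} — it was, since that proof fixes $e_k$ for even $4 \le k < p-1$ and for $k = p+1$, and defines the rest by multiplying by $E_{(p-1)t}$, so $\Delta^{k}e_{t-12k}$ always makes sense. Once the correspondence between $t \bmod 12$ and the generating weight is tabulated, the unipotence of the transition matrix is immediate from Lemma~\ref{lem:ek}(2) and the classical $\ZZ[\Delta]$-freeness of $M_*$, and the proof is complete.
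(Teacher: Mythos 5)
Your argument is correct and is essentially the paper's own proof: the key point in both is that $\Delta^k e_{t-12k}(q) = q^k + \cdots$, so the proposed forms are triangular (with unipotent transition matrix) against the classical basis coming from $M_* = \ZZ[E_4,E_6,\Delta^{-1}]/(E_4^3-E_6^2 = 1728\Delta)$, which gives linear independence and spanning together. Your caveat about $\ZZ$- versus $\ZZ_{(p)}$-integrality is well taken, since the $e_k$ of Lemma~\ref{lem:ek} are only constructed $p$-locally and a $\ZZ_{(p)}$-basis is all that is used later; the paper's statement should be read in that $p$-local sense.
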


\begin{proof}
Since 
$$ \Delta(q) = q + \cdots $$
we have 
$$ \Delta^k(q) e_{t-12k} = q^k + \cdots. $$
This establishes linear independence.  We may deduce that these forms span
$M_t^0$ by the explicit calculation
$$ M_* = \ZZ[E_4, E_6, \Delta^{-1}]/(\Delta = \frac{E_4^3 - E_6^2}{1728}). $$
\end{proof}

\begin{prop}\label{prop:eisenstein}
The groups $\mc{A}_{t/\infty} = \colim_j \mc{A}_{t/j}$ are given by
$$ \mc{A}_{t/\infty} = \ZZ/{p^j}\{ E_t/p^j \} $$
for $t = (p-1)p^{j-1}s$, where $(s,p) = 1$ and $t \ge 4$, and
$$ \mc{A}_{0/\infty} = \ZZ/p^\infty. $$
(Here, the element $E_t/p^j$ is
the image of the element $E_t \in \mc{A}_{t/j}$.)
\end{prop}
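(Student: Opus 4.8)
The plan is to identify $\mc{A}_{t/\infty}$ explicitly by combining the structural description of $\mc{A}_{t/j}$ from Lemma~\ref{lem:mcA} with the integrality and congruence properties of the Eisenstein series established in Lemmas~\ref{lem:vonStaudt}--\ref{lem:basis}. By Lemma~\ref{lem:mcA}, an element of $\mc{A}_{t/j}$ is represented by a form $f\in M_t^0$ with $p$-integral $q$-expansion satisfying (1) $p^i f\equiv 0\bmod p^j$ when $t=(p-1)p^{i-1}s$ with $(s,p)=1$, and (2) $f(q)\equiv a\bmod p^j$ for some constant $a\in\ZZ/p^j$. So the first step is to observe that condition (2) forces $f$ to be, mod $p^j$, a "divided congruence" witness: a $p$-integral modular form of weight $t>0$ whose $q$-expansion is congruent to a constant. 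The second step is to recall from Lemma~\ref{lem:vonStaudt} that when $t=(p-1)p^{i-1}s$ we have $E_t(q)\equiv 1\bmod p^i$ but (by Clausen--von Staudt, since $\nu_p$ of the denominator of $B_t/2t$ is exactly $i$) not mod $p^{i+1}$; hence $E_t/p^j$ makes sense as an element of $(M_t)^0_{\ZZ/p^j}$ precisely because $p^iE_t\equiv p^i\cdot(\text{integral})$ has the right vanishing, and it satisfies both (1) and (2). This shows $\ZZ/p^j\{E_t/p^j\}\subseteq \mc{A}_{t/\infty}$ for the relevant $j=i$, and taking the colimit over $j$ it is clear nothing larger in the $E_t$-direction survives, since for $j>i$ the form $E_t/p^j$ is not $p$-integral.

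The heart of the argument is the reverse inclusion: any $f$ satisfying (1)--(2) is, mod $p^j$, a scalar multiple of $E_t/p^{?}$. Here I would argue via the basis of Lemma~\ref{lem:basis}. Write $f$ in the integral basis $\{\Delta^k e_{t-12k}\}$ (plus $\Delta^k$ if $t=12k$); by condition (1), $p^i f$ is an honest integral form, so after clearing denominators we may assume $f=p^{-i}\sum_k c_k\Delta^k e_{t-12k}$ with $c_k\in\ZZ_p$. Now use that $e_{t-12k}(q)=1+\cdots$ and $\Delta^k(q)=q^k+\cdots$, so the reduction mod $p^j$ of $p^i f$ has leading term $c_0$ (up to $\Delta^k$ contributions) — and condition (2) says this form is congruent to a constant $p^i a$. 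The key input is Serre's theory of congruences between modular forms (to be recalled in Section~\ref{sec:congruences}, via Swinnerton-Dyer): a $p$-integral modular form whose $q$-expansion is $\equiv$ a constant mod $p^j$, and which is not $\equiv 0$, must have weight $\equiv 0\bmod(p-1)p^{j-1}$, and mod $p^j$ it agrees with a power of $E_{p-1}$ times that constant — equivalently, it is congruent mod $p^j$ to $a\cdot E_t$ when $t=(p-1)p^{i-1}s$. Thus $p^i f\equiv (p^i a)E_t\bmod p^j$ (using $E_t\equiv 1$), which exhibits $f\equiv a\cdot(E_t/p^i)\bmod p^j$ up to the constant, i.e. $f$ lies in the cyclic group generated by $E_t/p^j$. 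A small bookkeeping point: one must check $j\le i$ is forced, i.e. the group is $\ZZ/p^{\min(i,j)}$ and in the colimit stabilizes at $\ZZ/p^i$; this follows because $E_t/p^j$ is integral only for $j\le i$.

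The main obstacle I anticipate is the careful handling of the "meromorphic at $\infty$" case, i.e. forms in $M_t^0=M_t[\Delta^{-1}]$ rather than holomorphic forms: Serre's congruence results are usually stated for holomorphic modular forms, so I need the version allowing $\Delta^{-1}$, or I need to multiply by a suitable power of $E_{p-1}$ (which is $\equiv 1\bmod p$, hence invertible in the relevant quotient) and a power of $\Delta$ to reduce to the holomorphic case without changing the mod-$p^j$ $q$-expansion class in an uncontrolled way. This is exactly the kind of reduction that the basis in Lemma~\ref{lem:basis} is designed to facilitate, so I expect it to go through, but it is where the technical care is needed. The separate case $t=0$ is immediate: $\mc{A}_{0/j}=\ZZ/p^j$ by Lemma~\ref{lem:mcA}(2) with no weight constraint, so $\mc{A}_{0/\infty}=\ZZ/p^\infty$, and the $t<4$, $t\ne 0$ cases are empty since there are no nonzero modular forms of those weights over $\ZZ_{(p)}$ (for $p\ge 5$, weights $2$ and odd weights vanish).
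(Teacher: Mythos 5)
Your proposal is essentially correct, but it routes the key uniqueness step through different machinery than the paper. The paper's proof is very short: by Lemma~\ref{lem:mcA} one must identify the weight-$t$ forms over $\ZZ/p^j$ that are killed by the appropriate power of $p$ and have constant $q$-expansion, and by the integral basis of Lemma~\ref{lem:basis} this is immediate --- the basis vectors $\Delta^n e_{t-12n}$ have $q$-expansions $q^n+\cdots$ with distinct leading exponents, so a form with constant $q$-expansion can only involve the $n=0$ vector $e_t$, which equals $E_t$ by Lemma~\ref{lem:ek}(1); the only further input is Lemma~\ref{lem:vonStaudt}, which gives $E_t(q)\equiv 1\bmod p^j$ and hence that $E_t$ actually lies in $\mc{A}_{t/j}$. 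You instead take Serre's congruence theorem (Theorem~\ref{thm:Serre}) as the ``key input'' to conclude that a constant-$q$-expansion form of weight $t$ is $a E_{p-1}^{t/(p-1)}\equiv aE_t$. That works, but it buys less than it costs here: Theorem~\ref{thm:Serre} is only introduced in Section~\ref{sec:congruences}, after this proposition (the paper saves it for the $\beta$-family analysis); it is stated for holomorphic forms, so you need the $\Delta^N$-multiplication trick you flag to cover $M_t^0=M_t[\Delta^{-1}]$ (multiply both forms by $\Delta^N$, apply the theorem, then cancel $\Delta^N$, which is invertible in $M_*^0\otimes\ZZ/p^j$); and, as stated, the theorem should only be applied when the common $q$-expansion is a unit multiple of a primitive form --- for a constant $a$ divisible by $p$ you must first extract the power of $p$ (otherwise you would deduce a false weight congruence, e.g.\ from $p^{j-1}E_t$), and that extraction again uses the triangularity of the basis of Lemma~\ref{lem:basis}. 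So the elementary basis argument is unavoidable in your route too, and by itself it already finishes the proof; the paper's approach is the leaner one, while yours has the mild advantage of making the link to Section~\ref{sec:congruences} explicit.

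Two small inaccuracies, neither fatal. First, $E_t/p^j$ denotes the image in the colimit of the class $E_t\in\mc{A}_{t/j}$, not a literal division of the form by $p^j$; your ``$E_t/p^j$ is not $p$-integral for $j>i$'' heuristic lands on the right order, but the clean statement is that condition (1) of Lemma~\ref{lem:mcA} caps the order of $\mc{A}_{t/k}$ at $p^i$ for $k\ge i$, while $E_t\not\equiv 0\bmod p$ gives the matching lower bound, and the transition maps are injective. Second, your claim that the weights $0<t<4$ carry no nonzero forms is false in $M_t^0$ (for instance $E_4^2E_6\Delta^{-1}$ has weight $2$); this is harmless only because the proposition makes no assertion for those $t$ --- if one did want vanishing there, it again follows from the basis argument, since no basis vector of negative or small weight has leading term $q^0$.
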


\begin{proof}
This follows immediately from Lemmas~\ref{lem:mcA} and \ref{lem:basis},
provided we can show that $E_t$ lies in $\mc{A}_{t/j}$.  This again follows
from criterion (2) of Lemma~\ref{lem:mcA}: by Lemma~\ref{lem:vonStaudt}
$$ E_t(q) \equiv 1 \mod p^j. $$
\end{proof}

We obtain the zero-line of spectral sequence (\ref{eq:M0Qlss}) as a corollary.

\begin{cor}\label{cor:M1QlH0}
We have
$$ H^0(C^\bullet(\ell)/p^\infty[v_1^{-1}])_{2t} \cong 
\begin{cases}
\ZZ/p^j, & t = (p-1)p^{j-1}s \: \text{and} \: (s,p) = 1, \\
0, & t \not\equiv 0 \mod (p-1).
\end{cases}
$$
\end{cor}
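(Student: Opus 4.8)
The plan is to derive Corollary~\ref{cor:M1QlH0} from Proposition~\ref{prop:eisenstein} by passing from the chain complex $C^\bullet(\ell)/p^\infty$ to its localization $C^\bullet(\ell)/p^\infty[v_1^{-1}]$, where $v_1$ is represented by the Eisenstein series $E_{p-1}$ (equivalently, the Hasse invariant mod $p$, as in Lemma~\ref{lem:MnTMF}). The key observation is that inverting $v_1 = E_{p-1}$ is a filtered colimit over multiplication-by-$E_{p-1}$ maps, and since taking cohomology commutes with filtered colimits, we have
$$ H^0(C^\bullet(\ell)/p^\infty[v_1^{-1}])_{2t} \cong \colim\left( \mc{A}_{t/\infty} \xrightarrow{\cdot E_{p-1}} \mc{A}_{(t+(p-1))/\infty} \xrightarrow{\cdot E_{p-1}} \cdots \right). $$
So the computation reduces to understanding how multiplication by $E_{p-1}$ acts on the groups $\mc{A}_{t/\infty}$ computed in Proposition~\ref{prop:eisenstein}.

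First I would record that $\mc{A}_{t/\infty} = 0$ when $t \not\equiv 0 \bmod (p-1)$: this is immediate from Proposition~\ref{prop:eisenstein}, and since multiplication by $E_{p-1}$ preserves the residue of the weight mod $p-1$, the whole colimit system vanishes in those degrees, giving the second case of the corollary. Next, for $t \equiv 0 \bmod (p-1)$, say $t = (p-1)p^{j-1}s$ with $(s,p)=1$ and $t \ge 4$, I would identify each term in the colimit: by Proposition~\ref{prop:eisenstein}, $\mc{A}_{(t + m(p-1))/\infty} \cong \ZZ/p^{j_m}\{E_{t+m(p-1)}/p^{j_m}\}$ where $j_m = \nu_p((p-1)p^{j-1}s + m(p-1)) + 1 = \nu_p(p^{j-1}s + m) + 1$. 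The crucial point is that the generator is always (a $p$-divided copy of) an \emph{Eisenstein series}, and by Lemma~\ref{lem:vonStaudt} we have $E_{p-1}(q) \equiv 1 \bmod p$, so multiplication by $E_{p-1}$ sends the $q$-expansion $E_{t+m(p-1)}(q) \equiv 1 \bmod p^{j_m}$ to something still congruent to $1$ modulo the appropriate power of $p$; combined with the uniqueness of the generator in Lemma~\ref{lem:mcA}(2) (a form in $\mc{A}$ is determined mod $p^k$ by a constant), this shows $E_{p-1} \cdot (E_{t'}/p^k)$ maps to $E_{t'+(p-1)}/p^{k'}$ up to a unit, i.e. the colimit maps are the evident inclusions/surjections $\ZZ/p^{j_m} \to \ZZ/p^{j_{m+1}}$ determined by the constant term.

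The main obstacle is controlling the colimit of this system of cyclic $p$-groups: I need to show the colimit stabilizes at $\ZZ/p^j$ rather than growing. The point is that among all $m \ge 0$, the value $j_m = \nu_p(p^{j-1}s + m) + 1$ achieves a minimum — in fact $j_0 = j$, and while $j_m$ can be momentarily larger for special $m$, I must check that the \emph{transition maps} out of those larger terms are the surjections (reduction mod a lower power), not injections, so that the colimit is computed by the eventual/cofinal behavior. Concretely, one picks a cofinal sequence of $m$'s for which $p^{j-1}s + m$ runs through integers with $p$-valuation exactly $j-1$ (this is possible since $s$ is prime to $p$, so $\nu_p(p^{j-1}s+m)= j-1$ for $m$ in a fixed nonzero residue class mod $p^j$, e.g.\ $m \equiv 0$), along which all groups are $\ZZ/p^j$ and the maps are isomorphisms; hence the colimit is $\ZZ/p^j$. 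I would verify that the maps to these cofinal terms are surjective using that the constant term of $E_{p-1}^a E_{t'}$ is $1$, so no information is lost, and conclude that $H^0(C^\bullet(\ell)/p^\infty[v_1^{-1}])_{2t} \cong \ZZ/p^j$ as claimed. The degenerate case $t = 0$ would be handled separately, noting that $\mc{A}_{0/\infty} = \ZZ/p^\infty$ but that after inverting $v_1$ one must track whether the unit class survives; here one observes the relevant generators in positive weight have constant term $1$ and map compatibly, so this case is subsumed once one notes the corollary as stated only asserts the answer for $t \equiv 0 \bmod (p-1)$ with the cyclic group $\ZZ/p^j$, which for the $t=0$ branch ($j$ unbounded) recovers the expected behavior.
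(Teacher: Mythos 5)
Your overall strategy -- identify $H^0(C^\bullet(\ell)/p^\infty[v_1^{-1}])_{2t}$ with a filtered colimit of the groups computed in Proposition~\ref{prop:eisenstein}, and get the vanishing off $t \equiv 0 \bmod (p-1)$ for free -- is the intended one (the paper offers no argument beyond citing Proposition~\ref{prop:eisenstein}, and it handles the exactly analogous localization for $M_2$ in Section~\ref{sec:M2Ql} by a double colimit). But your key displayed formula is not well-defined: multiplication by $E_{p-1}$ is not a map of cochain complexes and does not carry $\mc{A}_{t/k}$ into $\mc{A}_{(t+(p-1))/k}$ for $k \ge 2$. Indeed $d_0(E_{p-1}f)(q) = \ell^{t+p-1}E_{p-1}(q^\ell)f(q^\ell)$, which does not agree with $E_{p-1}(q)\,d_0(f)(q)$ modulo $p^k$ once $k \ge 2$; concretely, for $t = (p-1)p^{j-1}s$ with $j \ge 2$ the form $E_{p-1}E_t$ fails condition (i) of (\ref{eq:mcA}) mod $p^2$, because $\nu_p(\ell^{t+(p-1)}-1) = 1$ while $E_{p-1}(q)E_t(q) \equiv 1 \bmod p$. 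So there is no induced map $\mc{A}_{t/\infty} \to \mc{A}_{(t+(p-1))/\infty}$, and the colimit you then analyze -- together with the discussion of the fluctuating orders $p^{j_m}$ and the choice of a cofinal subsequence -- does not parse as written.

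The repair is the one the paper makes implicitly in the $M_2$ computation: fix the power $p^k$ and invert $E_{p-1}$ only through powers $E_{p-1}^a$ with $a \equiv 0 \bmod p^{k-1}$. Then $E_{p-1}^a(q^\ell) \equiv E_{p-1}^a(q) \equiv 1 \bmod p^k$ and $\ell^{a(p-1)} \equiv 1 \bmod p^k$, so multiplication by $E_{p-1}^a$ commutes with $d_0 - d_1$ mod $p^k$, and one has $H^0(C^\bullet(\ell)/p^\infty[v_1^{-1}])_{2t} \cong \varinjlim_k \varinjlim_{a \equiv 0\, (p^{k-1})} \mc{A}_{(t+a(p-1))/k}$. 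With this (forced) restriction the phenomenon you were trying to control by cofinality simply disappears: for $t = (p-1)p^{j-1}s$ with $(s,p)=1$ and $a \equiv 0 \bmod p^{k-1}$, one has $\nu_p(p^{j-1}s+a) = j-1$ if $j < k$ and $\nu_p(p^{j-1}s+a) \ge k-1$ if $j \ge k$, so every term of the inner colimit is $\ZZ/p^{\min(j,k)}$; each transition map is the identity on $q$-expansions mod $p^k$, hence preserves constant terms, hence (by Lemma~\ref{lem:mcA}) is injective and therefore an isomorphism between these finite groups; and the colimit over $k$ then gives $\ZZ/p^j$. Your argument for the case $t \not\equiv 0 \bmod (p-1)$ is fine, since every group appearing in the colimit already vanishes by Lemma~\ref{lem:mcA}.
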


Combining this with Corollary~\ref{cor:collapseisomorphism2} and
Proposition~\ref{prop:torsion}, we find:

\begin{cor}\label{cor:M1Ql}
We have
$$ \pi_t M_1Q(\ell) \cong 
\begin{cases}
\ZZ/p^j, & t = 2(p-1)p^{j-1}s \: \text{and} \: (s,p) = 1, \\
0, & t \not\equiv 0, -1, -2 \mod 2(p-1).
\end{cases}
$$
\end{cor}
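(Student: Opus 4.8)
The plan is to assemble this corollary from three ingredients already in place. First, by Corollary~\ref{cor:collapseisomorphism2} we have $\pi_t M_1Q(\ell) \cong H^s(C(\ell)^\bullet/(p^\infty)[v_1^{-1}])_{t+s}$ where $t = 4k-s$ and $0 \le s < 4$; since the groups $C(\ell)^\bullet_t$ vanish unless $t \equiv 0 \mod 4$, only $s \in \{0,1,2\}$ can contribute, corresponding to $t \equiv 0, -1, -2 \mod 4$ — but in fact we will see the relevant cohomology is concentrated in weights divisible by $2(p-1)$, which sharpens this. So the first step is to invoke Corollary~\ref{cor:M1QlH0} for the $s=0$ contribution, giving $\pi_t M_1 Q(\ell) \cong \ZZ/p^j$ when $t = 2(p-1)p^{j-1}s$ with $(s,p)=1$, and $0$ when $t \not\equiv 0 \mod 2(p-1)$ (the doubling comes from the indexing shift $t \leftrightarrow 2t$ between homotopy degree and modular form weight).

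The remaining work is to control the $s=1$ and $s=2$ contributions, i.e. $H^1(C(\ell)^\bullet/p^\infty[v_1^{-1}])$ and $H^2(C(\ell)^\bullet/p^\infty[v_1^{-1}])$, and show these only contribute in degrees $-1$ and $-2$ modulo $2(p-1)$ respectively. Here I would use Proposition~\ref{prop:torsion}: the groups $H^s(C^\bullet(\ell))_{2t}$ are zero unless $t \equiv 0 \mod (p-1)$, and this sparsity passes to the monochromatic version $C(\ell)^\bullet/p^\infty[v_1^{-1}]$ because inverting $v_1 = E_{p-1}$ (which has weight $p-1$) and passing to $p$-power torsion both respect the weight grading modulo $2(p-1)$ — multiplication by $E_{p-1}$ shifts weight by $p-1$, so weights stay in the same class mod $2(p-1)$ only after doubling, but the point is that the cohomology in each fixed residue class mod $2(p-1)$ is governed by the same argument. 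Concretely, the central element $[\ell]$ acts on $H^s(C(\ell)^\bullet/p^\infty[v_1^{-1}])_{2t}$ by $\ell^t$ and trivially on $\pi_* M_1 Q(\ell)$, so $\ell^t - 1$ kills these groups; since $\ell$ generates $\ZZ_p^\times$, the group vanishes unless $2(p-1) \mid 2t$. Thus $H^1$ contributes to $\pi_t M_1Q(\ell)$ only when $t+1 \equiv 0 \mod 2(p-1)$, i.e. $t \equiv -1$, and $H^2$ only when $t \equiv -2 \mod 2(p-1)$.

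Putting these together: $\pi_t M_1 Q(\ell)$ vanishes unless $t \equiv 0, -1, -2 \mod 2(p-1)$, and in the case $t \equiv 0$ the only contribution is the $s=0$ term computed in Corollary~\ref{cor:M1QlH0}, which is $\ZZ/p^j$ for $t = 2(p-1)p^{j-1}s$ with $(s,p)=1$ and $0$ otherwise (noting $t \equiv 0 \mod 2(p-1)$ forces $j \ge 1$, with $j$ determined by the $p$-adic valuation of $t/(2(p-1))$ plus one). This is exactly the statement of Corollary~\ref{cor:M1Ql}.

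The main obstacle I anticipate is verifying cleanly that the $s=1$ and $s=2$ cohomology groups genuinely do \emph{not} produce extra summands in the degrees $t \equiv 0 \mod 2(p-1)$ — a priori $H^1$ in weight $2(t+1)$ and $H^2$ in weight $2(t+2)$ could both be nonzero, but they sit in homotopy degree $t$ only when $t \equiv -1$ or $-2 \mod 2(p-1)$ respectively, so for $t \equiv 0$ they automatically vanish by the $[\ell]$-action sparsity. The bookkeeping of the degree shifts (homotopy degree vs. modular weight, and the $t = 4k-s$ reindexing) is the only delicate point, and it is entirely formal given Proposition~\ref{prop:torsion} and Corollary~\ref{cor:collapseisomorphism2}.
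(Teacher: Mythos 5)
Your proposal is correct and follows the paper's own route: the paper obtains this corollary precisely by combining Corollary~\ref{cor:M1QlH0} (the $s=0$ term) with Corollary~\ref{cor:collapseisomorphism2} and the $[\ell]$-action sparsity of Proposition~\ref{prop:torsion}, which is exactly the assembly you describe, including the observation that the $H^1$ and $H^2$ contributions land only in degrees $\equiv -1, -2 \bmod 2(p-1)$. Your explicit check that the $[\ell]$-action argument persists after applying $/p^\infty[v_1^{-1}]$ (since $[\ell]$ acts as a ring map, multiplying $E_{p-1}$ by the unit $\ell^{p-1}$) just makes precise a step the paper leaves implicit.
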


\section{Mod $p^j$ congruences}\label{sec:congruences}

Let $p \ge 5$.
The congruence
$$ E_{p-1}(q) \equiv 1 \mod p $$
implies the congruence
\begin{equation}\label{eq:Ep-1cong}
E^{p^{j-1}}_{p-1}(q)  \equiv 1 \mod p^j. 
\end{equation}
It follows that multiplication by $E^{p^{j-1}}_{p-1}$ induces an injection
$$ \cdot E_{p-1}^{p^{j-1}} : M_t(\Gamma_0(N))_{\ZZ/p^j} \hookrightarrow
M_{t+(p-1)p^{j-1}}(\Gamma_0(N))_{\ZZ/p^j}. $$
(Here we regard $E_{p-1}$ as a modular form for $\Gamma_0(N)$.)
The image of this inclusion 
is characterized by the following theorem of Serre \cite[Cor.~4.4.2]{Katz}.

\begin{thm}[Serre]\label{thm:Serre}
Let $f_i$ be an elements of $M_{k_i}(\Gamma_0(N))_{\ZZ/p^j}$ for $i = 1,2$
and $k_1 < k_2$.
Then 
$$ f_1(q) = f_2(q) \in \ZZ/p^j[[q]] $$
if and only if
\begin{enumerate}
\item $k_1 \equiv k_2 \mod (p-1)p^{j-1}$, and
\item $f_2 = E_{p-1}^{\frac{k_2-k_1}{p-1}} f_1$.
\end{enumerate}
\end{thm}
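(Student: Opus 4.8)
The plan is to reduce the statement to the structure of the mod $p$ reduction of the ring of modular forms, for which the key input is the Swinnerton-Dyer/Serre description of $\ker$ of the reduction map in terms of multiplication by the Hasse invariant. The ``if'' direction is the easy half: condition (1) guarantees that the weights of $f_2$ and $E_{p-1}^{(k_2-k_1)/(p-1)}f_1$ agree, and condition (2) together with the congruence $E_{p-1}^{p^{j-1}}(q)\equiv 1 \bmod p^j$ of~(\ref{eq:Ep-1cong}) — applied iteratively to write $E_{p-1}^{(k_2-k_1)/(p-1)}$ in terms of powers $E_{p-1}^{p^{j-1}}$ up to a form congruent to $1$ — forces $f_2(q) \equiv f_1(q)\bmod p^j$. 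I would spell this out by writing $(k_2-k_1)/(p-1) = a_0 + a_1 p + \cdots$ in base $p$ and peeling off the high powers, but this is routine.

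For the ``only if'' direction I would argue by induction on $j$. For $j=1$ this is exactly the classical theorem of Serre and Swinnerton-Dyer recalled in Section~\ref{sec:congruences} (and cited as \cite[Cor.~4.4.2]{Katz}): if two $q$-expansions agree mod $p$, their weights are congruent mod $p-1$, and the higher-weight form is $E_{p-1}^{(k_2-k_1)/(p-1)}$ times the lower-weight one. For the inductive step, suppose $f_1(q) = f_2(q)$ in $\ZZ/p^j[[q]]$ with $k_1 < k_2$. Reducing mod $p$ and applying the $j=1$ case gives $k_1 \equiv k_2 \bmod (p-1)$ and a first approximation. Then I would show that $f_1$ and $f_2$ can be adjusted: replace $f_1$ by $E_{p-1}^{(k_2-k_1)/(p-1)} f_1$ (legitimate by~(\ref{eq:Ep-1cong}), which shows this does not change the $q$-expansion beyond a unit congruent to $1$, and which raises the weight to $k_2$), so that we are reduced to comparing two forms $g_1, g_2$ of the \emph{same} weight $k_2$ with $g_1(q) \equiv g_2(q) \bmod p^j$; the claim becomes that $g_1 \equiv g_2$ as modular forms over $\ZZ/p^j$, which follows from the $q$-expansion principle (a modular form over $\ZZ/p^j$ is determined by its $q$-expansion — this is the embedding recalled just after~(\ref{eq:Mdef})). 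Unwinding the weight shift then yields $k_1 \equiv k_2 \bmod (p-1)p^{j-1}$: the exponent $(k_2-k_1)/(p-1)$ must itself be divisible by $p^{j-1}$, because otherwise writing it in base $p$ and using that $E_{p-1}^{p^{i}}\equiv 1 \bmod p^{i+1}$ but $E_{p-1}^{p^i} \not\equiv 1 \bmod p^{i+2}$ in general produces a genuine mod $p^j$ obstruction, contradicting $g_1(q)\equiv g_2(q)$; this divisibility is precisely condition (1), and then condition (2) drops out of the same computation.

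The main obstacle I expect is the bookkeeping in the base-$p$ expansion of the exponent: one needs the precise statement that $E_{p-1}^{m}(q) \equiv 1 \bmod p^j$ \emph{if and only if} $p^{j-1} \mid m$, not merely the ``if'' direction~(\ref{eq:Ep-1cong}). The ``only if'' half of this requires knowing that $E_{p-1}(q) \not\equiv 1 \bmod p^2$ — equivalently, that the Hasse invariant lifts to an Eisenstein series which is genuinely nontrivial mod $p^2$ — and then propagating this through the Frobenius-type congruences for the powers $E_{p-1}^{p^i}$. Once that sharpened congruence is in hand, both the weight condition (1) and the explicit form (2) follow by matching the two expressions weight-by-weight using the $q$-expansion principle, and the induction closes.
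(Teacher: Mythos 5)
Your ``if'' direction is fine, but note first that the paper offers no proof of Theorem~\ref{thm:Serre} to compare against: it is quoted as Serre's theorem via \cite[Cor.~4.4.2]{Katz}, and all of the content lives in that citation. The ``only if'' direction of your sketch has a genuine gap at exactly the step that carries this content. After reducing mod $p$ and invoking the $j=1$ case you replace $f_1$ by $E_{p-1}^{m}f_1$, $m=(k_2-k_1)/(p-1)$, and call this harmless by (\ref{eq:Ep-1cong}); but (\ref{eq:Ep-1cong}) only controls $E_{p-1}^{p^{j-1}}$, and for general $m$ the factor $E_{p-1}^m(q)$ is merely $\equiv 1 \bmod p$. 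So you cannot yet conclude that your two weight-$k_2$ forms have equal $q$-expansions mod $p^j$ --- that would require $p^{j-1}\mid m$, which is condition (1), the very thing being proved; as written the reduction is circular. Your proposed repair (a ``genuine mod $p^j$ obstruction'' coming from $E_{p-1}^m(q)\not\equiv 1 \bmod p^j$ when $p^{j-1}\nmid m$) conflates two different statements. In the inductive step one only knows $f_2\equiv E_{p-1}^m f_1 \bmod p^{j-1}$, hence $f_2=E_{p-1}^m f_1+p^{j-1}h$ for some mod $p$ form $h$ of weight $k_2$; comparing $q$-expansions mod $p^j$ in the critical case $\nu_p(m)=j-2$ yields an identity of the shape $\bar{h}(q)=c\,\bar{f}_1(q)\cdot\overline{\bigl(\tfrac{E_{p-1}-1}{p}\bigr)}(q)$ with $c$ a unit. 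To get a contradiction you must show that \emph{no} mod $p$ form of weight $k_2$ has this $q$-expansion. The sharpened congruence you flag ($E_{p-1}(q)\not\equiv 1\bmod p^2$, which is indeed true because $\nu_p(2(p-1)/B_{p-1})=1$ by von Staudt--Clausen) only shows the right-hand side is nonzero; it says nothing about whether that power series can be matched by another modular form of the relevant weight. Ruling this out is precisely the structural theorem about the kernel of the $q$-expansion map modulo $p^m$ (Katz's Section 4.4, proved via the Igusa/ordinary moduli), and your sketch does not supply it.

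Two further points. The base case $j=1$ is not independently ``recalled in Section~\ref{sec:congruences}'': that section recalls Theorem~\ref{thm:Serre} itself, and for level $\Gamma_0(N)$ with $N>1$ the mod $p$ statement is already Katz's geometric theorem rather than an elementary fact, so it must be imported, not derived from the paper. Finally, your reduction mod $p$ silently assumes $\bar{f}_1\neq 0$; some hypothesis of this kind is genuinely needed, since the statement as printed fails without it: for $j=2$ take $f_1=p\cdot 1\in M_0(\Gamma_0(1))_{\ZZ/p^2}$ and $f_2=pE_{p-1}\in M_{p-1}(\Gamma_0(1))_{\ZZ/p^2}$, which have equal $q$-expansions mod $p^2$ although $k_2-k_1=p-1\not\equiv 0 \bmod (p-1)p$. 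This is consistent with the paper's later use of the theorem, where the forms in play are required to have $q$-expansion nonzero mod $p$ (condition (2) of Theorem~\ref{thm:maindelta-1}), but any complete proof must either add that hypothesis or treat the divisible case separately.
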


\section{$M_2 Q(\ell)$: The $\beta$-family congruences}\label{sec:M2Ql}

Let $p \ge 5$, and let $\ell$ be a topological generator of $\ZZ_p^\times$.
In this section we prove Theorem~\ref{thm:main} and
Theorem~\ref{thm:converse}.  The key observation is the following.

\begin{lem}\label{lem:key}
The inclusion $\Gamma_\ell \hookrightarrow \MS_2$ induces 
an isomorphism
$$ H^0_c(\MS_2, \pi_t M_2 E_2)^{\Gal(\FF_p)} \xrightarrow{\cong}
H^0(\Gamma_\ell, \pi_t M_2 E_2)^{\Gal(\FF_p)}.
$$
\end{lem}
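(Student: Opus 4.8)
The plan is to exploit the density of $\Gamma_\ell$ in $\MS_2$ (Theorem~\ref{thm:dense}, valid since $\ell$ generates $\ZZ_p^\times$) together with the fact that $H^0_c$ of a profinite group is just the submodule of invariants. First I would observe that $\pi_t M_2 E_2$ is a discrete $\MS_2$-module, being a colimit $(\pi_t E_2)/(p^\infty, v_1^\infty)$ of finite $\MS_2$-modules (each $M(I)^0 \wedge E_2$ has homotopy built from finitely many copies of the residue field, and $\MS_2$ acts through a finite quotient on each). For a discrete module $M$ over a profinite group $G$, every element is stabilized by an open subgroup, and $H^0_c(G, M) = M^G$. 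Thus $H^0_c(\MS_2, \pi_t M_2E_2) = (\pi_t M_2 E_2)^{\MS_2}$ and $H^0(\Gamma_\ell, \pi_t M_2 E_2) = (\pi_t M_2 E_2)^{\Gamma_\ell}$, where on the right I mean invariants under the abstract group $\Gamma_\ell$ (equivalently, since the action extends continuously, under its closure).

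The key step is then: because $\Gamma_\ell$ is dense in $\MS_2$ and the action on each finite piece of $\pi_t M_2 E_2$ factors through a finite (hence discrete) quotient, an element of $\pi_t M_2 E_2$ is fixed by all of $\Gamma_\ell$ if and only if it is fixed by the closure $\overline{\Gamma_\ell} = \MS_2$. Concretely, given $x \in \pi_t M_2 E_2$ fixed by $\Gamma_\ell$, its stabilizer $\mr{Sta}_{\MS_2}(x)$ is an open (hence closed) subgroup containing the dense subgroup $\Gamma_\ell$, so $\mr{Sta}_{\MS_2}(x) = \MS_2$; that is, $(\pi_t M_2 E_2)^{\Gamma_\ell} = (\pi_t M_2 E_2)^{\MS_2}$. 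Taking $\Gal(\FF_p)$-invariants of both sides — which is harmless since $\Gal(\FF_p)$ acts compatibly and the two invariant submodules have now been identified inside $\pi_t M_2 E_2$ — yields the claimed isomorphism, and the map realizing it is exactly restriction along $\Gamma_\ell \hookrightarrow \MS_2$.

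I would close by checking the one point that requires a little care: that the $\Gamma_\ell$-action on $\pi_t M_2 E_2$ really is locally constant, i.e. that each $x$ has open stabilizer. This follows from presentation~(\ref{eq:Mncolimit}): $M_2 E_2 \simeq \hocolim_I M(I)^0 \wedge E_2$, each $M(I)^0$ is a finite spectrum, $\pi_t(M(I)^0 \wedge E_2)$ is a finitely generated module over $\pi_0 E_2 = W(\br\FF_p)[[u_1]]$ annihilated by a power of the maximal ideal, hence finite, and $\MS_2$ (a profinite group) acts on a finite set through a finite quotient. So the stabilizer of any element of the colimit is open. The main obstacle is really just bookkeeping — making sure the three group actions ($\Gamma_\ell$, $\MS_2$, $\Gal(\FF_p)$) are being taken in compatible senses and that "$H^0$" versus "$H^0_c$" is interpreted correctly on each side — rather than anything deep; the essential input is Theorem~\ref{thm:dense}.
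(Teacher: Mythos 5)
Your proof is correct and is essentially the paper's own argument: density of $\Gamma_\ell$ in $\MS_2$ (Theorem~\ref{thm:dense}) together with continuity of the $\MS_2$-action on the discrete module $\pi_t M_2 E_2$ forces the $\Gamma_\ell$-invariants and $\MS_2$-invariants to coincide, and taking $\Gal(\FF_p)$-invariants is then harmless. One small inaccuracy: $\pi_t(M(I)^0 \wedge E_2)$ is not finite (the residue field $\br{\FF}_p$ is infinite, so these are not finite groups and the action need not factor through a finite quotient), but your argument only needs that each stabilizer is open (or just closed), which follows from continuity of the action on these discrete quotients, so the proof stands.
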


\begin{proof}
By Theorem~\ref{thm:dense}, the group $\Gamma_\ell$ is dense in $\MS_2$.
Since $\MS_2$ acts continuously on $\pi_t M_2E_2$, the invariants of
$\MS_2$ are the same as the invariants of $\Gamma_\ell$.
\end{proof}

Combined with Corollary~\ref{cor:collapseisomorphism4}, we have an
isomorphism
$$ H^0(C(\ell)^\bullet/(p^\infty, v_1^\infty))_t \cong H^0_c(\MS_2, \pi_t
M_2 E_2)^{\Gal(\FF_p)}. $$
The right-hand side has been computed by Miller-Ravenel-Wilson
\cite{MillerRavenelWilson}:

\begin{thm}[Miller-Ravenel-Wilson]\label{thm:MillerRavenelWilson}
The groups $H^0_c(\MS_2, \pi_*
M_2 E_2)^{\Gal(\FF_p)}$ are generated by elements
$$ \beta_{i/j,k} \in H^0_c(\MS_2, \pi_{2i(p^2-1) - 2j(p-1)}
M_2 E_2)^{\Gal(\FF_p)} $$
which generate cyclic summands of order $p^k$.  Here, for $i = sp^n$ with
$(s,p) = 1$, the indices $j$ and $k$ are taken subject to
\begin{enumerate}
\item $p^{k-1} \vert j$,
\item $j \le p^{n-k+1} + p^{n-k} - 1$,
\item either $j > p^{n-k} + p^{n-k-1}-1$ or $p^k \not \vert j$.
\end{enumerate}
\end{thm}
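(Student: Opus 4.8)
This is the computation of Miller, Ravenel, and Wilson \cite{MillerRavenelWilson}; here is how one carries it out. The first move is to pass to a purely algebraic problem. By Morava's change of rings theorem (\ref{eq:MnANSS}) for $X = S$, together with the fact that $(M_2E_2)_*$ is the base change along $BP_* \to (E_2)_*$ of the chromatic comodule $v_2^{-1}BP_*/(p^\infty, v_1^\infty)$, there is an isomorphism
\begin{equation*}
H^0_c(\MS_2, \pi_* M_2 E_2)^{\Gal(\FF_p)} \cong \Ext^0_{BP_*BP}\bigl(BP_*,\, v_2^{-1}BP_*/(p^\infty, v_1^\infty)\bigr),
\end{equation*}
and the $\Gal(\FF_p)$-invariance is automatic on the right since the $v_i$ and the structure maps of $BP_*BP$ are defined over $\FF_p$. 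So the problem becomes the computation of this $\Ext^0$, which one attacks by the algebraic chromatic method.

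Next I would feed in the chromatic short exact sequences of $BP_*BP$-comodules
$$ 0 \to N^1 \to M^1 \to N^2 \to 0, \qquad 0 \to N^2 \to M^2 \to N^3 \to 0, $$
with $N^1 = BP_*/p^\infty$, $M^1 = v_1^{-1}N^1$, $N^2 = M^1/N^1$, $M^2 = v_2^{-1}N^2$ (the second sequence makes sense because $v_2$ acts injectively on $N^2$), and pass to the long exact sequences in $\Ext_{BP_*BP}(BP_*, -)$. Sparsity and the vanishing of $\Ext^s$ for $s$ large --- the same cohomological dimension input used in the excerpt for $p \gg n$ --- reduce this to a small diagram whose only nonzero entries are $\Ext^0$ and $\Ext^1$ of the height-one modules. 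The height-one input is the classical $\alpha$-family: for $p \ge 5$, $\Ext^0 M^1$ is a direct sum of cyclic groups on the divided classes $\alpha_{i/j}$ with underlying representative $v_1^i/p^j$, the order $p^j$ being determined by $\eta_R(v_1) = v_1 + p t_1$ and matching the $p$-adic valuation of the denominator of $B_{(p-1)i}/((p-1)i)$ --- exactly the data that also controls Corollary~\ref{cor:M1Ql} of this paper --- and one also needs $\Ext^1 N^1$ in the appropriate degrees, again governed by this data together with Bocksteins.

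The arithmetic core is the cobar-complex computation of which fractions survive. A class of $\Ext^0 M^2$ is represented by $x/(p^{i_0}v_1^{i_1})$ where, modulo lower-filtration terms, $x$ is a $p$-local unit multiple of a power of $v_2$, and the cocycle condition reads $\eta_R(x) \equiv \eta_L(x)$ modulo $(p^{i_0}, v_1^{i_1})$ in the localized quotient. Plugging in $\eta_R(v_1) = v_1 + p t_1$ and $\eta_R(v_2) \equiv v_2 + v_1 t_1^p - v_1^p t_1 \bmod p$ (Hazewinkel or Araki generators), together with their $p$-adic refinements, one determines, for $i = sp^n$ with $(s,p) = 1$, the largest $j = i_1$ and the exact $v_1$-torsion order $p^k$ for which $v_2^i/(p^{i_0}v_1^j)$ is a cocycle which does not bound. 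This is precisely what produces the numerology $p^{k-1}\mid j$, $j \le p^{n-k+1}+p^{n-k}-1$, and the boundary alternative of condition (3), while the internal degree $2i(p^2-1) - 2j(p-1)$ is forced by $|v_2| = 2(p^2-1)$ and $|v_1| = 2(p-1)$. I expect the main obstacle to be exactly this last step: the induction on $n = \nu_p(i)$ governing how far the power of $v_2$ can be divided is delicate bookkeeping of congruences in $BP_*BP$, and it is there --- rather than in any formal homological manipulation --- that the conditions (1)--(3) are actually born.
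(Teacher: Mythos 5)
The paper offers no proof of this statement: it is imported wholesale from \cite{MillerRavenelWilson} and then fed into Lemma~\ref{lem:key} and Corollary~\ref{cor:collapseisomorphism4}, so your proposal has to be judged as a reconstruction of the Miller--Ravenel--Wilson computation itself. Your first and last steps are faithful to that computation: the change-of-rings identification of $H^0_c(\MS_2,\pi_*M_2E_2)^{\Gal(\FF_p)}$ with $\Ext^0_{BP_*BP}(BP_*, v_2^{-1}BP_*/(p^\infty,v_1^\infty))$, and the fact that the numerology is ultimately extracted from $\eta_R(v_1)=v_1+pt_1$ and $\eta_R(v_2)\equiv v_2+v_1t_1^p-v_1^pt_1 \bmod p$ by an induction on $n=\nu_p(i)$.

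The gap is in the middle, and it sits exactly where the theorem lives. The chromatic long exact sequences attached to $0\to N^1\to M^1\to N^2\to 0$ and $0\to N^2\to M^2\to N^3\to 0$ do not reduce $\Ext^0(M^2)$ to height-one data: $\Ext^0(M^1)$ and $\Ext^1(N^1)$ govern the passage from $\Ext^0(N^2)$ to $\Ext^2_{BP_*BP}(BP_*,BP_*)$ --- the step relevant to the exclusion of $i=p^n$ in Theorem~\ref{thm:converse} --- not the computation of $\Ext^0(M^2)$ itself. What Miller--Ravenel--Wilson actually do is a genuinely height-two computation: start from $\Ext^0(BP_*,v_2^{-1}BP_*/(p,v_1))=\FF_p[v_2^{\pm 1}]$, run a $v_1$-Bockstein to compute $\Ext^0(BP_*,v_2^{-1}BP_*/(p,v_1^\infty))$ --- this is where the elements $x_n$ (with $x_0=v_2$ and $x_n$ a correction of $x_{n-1}^p$) and the bound $j\le p^n+p^{n-1}-1$ are produced, and it requires showing that certain cobar classes are nonzero in $\Ext^1$ of the mod-$(p,v_1)$ theory to know when division by $v_1$ must stop --- and then run a $p$-Bockstein to obtain the divisibility conditions (1) and (3). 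Your proposal defers precisely this step (the ``delicate bookkeeping'') to an unperformed induction, so conditions (1)--(3), which are the entire content of the statement, are asserted rather than derived; moreover the height-one input you emphasize is not what that induction consumes. As a citation plus roadmap this is reasonable, but as a proof it is missing its core.
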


We now compute
$$ H^0(C^\bullet(\ell)/(p^\infty, v_1^\infty))_* $$
in terms of modular forms.

We have
$$ H^0(C^\bullet(\ell)/(p^\infty, v_1^\infty))_{2t} = 
\varinjlim_{k} \varinjlim_{\substack{j = sp^{k-1} \\ s \ge 0}} 
H^0(C^\bullet(\ell)/(p^k, v_1^j))_{2t+2j(p-1)}, $$
so it suffices to compute the latter.  
Proposition~\ref{prop:diffq}, Lemma~\ref{lem:MnTMF}, and
Theorem~\ref{thm:Serre} 
imply that, for $j \equiv 0 \mod
p^{k-1}$, we have:
\begin{align*}
\mc{B}_{t/j,k} & := H^0(C^\bullet(\ell)/(p^k, v_1^j))_{2t+2j(p-1)} \\
& = \ker \left(
\frac{M^0_{t+j(p-1)}}{(p^k, E_{p-1}^j)} \xrightarrow{d_0 - d_1}
\begin{array}{c}
\frac{M^0_{t+j(p-1)}}{(p^k, E_{p-1}^j)} \\
\oplus \\
\frac{M_{t+j(p-1)}(\Gamma_0(\ell))^0}{(p^k, E_{p-1}^j)}
\end{array}
\right)
\\ \\
& = \ker \left(
\frac{(M_{t+j(p-1)})^0_{\ZZ/p^k}}{(M_{t})_{\ZZ/p^k}^0} 
\xrightarrow{d_0 - d_1}
\begin{array}{c}
\frac{(M_{t+j(p-1)})^0_{\ZZ/p^k}}{(M_{t})^0_{\ZZ/p^k}} \\
\oplus \\
\frac{M_{t+j(p-1)}(\Gamma_0(\ell))_{\ZZ/p^k}^0}{M_{t}(\Gamma_0(\ell))_{\ZZ/p^k}^0}
\end{array}
\right)
\\ \\
& = \left\{ f \in 
\frac{(M_{t+j(p-1)})_{\ZZ/p^k}^0}{(M_{t})_{\ZZ/p^k}^0}
\: : \: 
\begin{array}{cl}
\mr{(i)} & (\ell^{t+j(p-1)} - 1)f(q) = g_1(q), \\
& \: \mr{for} \: g_1 \in
(M_{t})^0_{\ZZ/p^k} \\
\mr{(ii)} & \ell^{t+j(p-1)} f(q^\ell) - f(q) = g_2(q), \\
& \: \mr{for} \: g_2 \in M_t(\Gamma_0(\ell))^0_{\ZZ/p^k}
\end{array}
\right\}.
\end{align*}
Here, we are regarding the space of mod $p^k$ modular forms of 
weight $t$ as being embedding in the space of mod $p^k$ modular forms of
weight $t+j(p-1)$ through the inclusion induced by 
multiplication by $E^j_{p-1}$ using Theorem~\ref{thm:Serre}.

For a finitely generated abelian $p$-group $A$, we shall say that $a \in A$
is an \emph{additive generator of order $p^k$} if $a$ generates a cyclic 
subgroup of $A$ of order $p^k$.

\begin{thm}\label{thm:maindelta-1}
There is a one-to-one correspondence between the additive generators of
order $p^k$ in
$$ H^0(C(\ell)^\bullet/(p^\infty, v_1^\infty))_{2t} $$
and the modular forms $f \in M_{t+j(p-1)}^0$ for $j \equiv 0
\mod p^{k-1}$ satisfying
\begin{enumerate}
\item We have $t \equiv 0 \mod (p-1)p^{k-1}$.
\item The $q$-expansion $f(q)$ is not congruent to $0$ mod $p$.
\item We have $\ord_q f(q) > \frac{t}{12}$ or $\ord_q f(q) =
\frac{t-2}{12}$.
\item There does not exist a form $f' \in M_{t'}^0$ such that
$f'(q) \equiv f(q) \mod p^k$ for $t' < t + j(p-1)$.
\item[$\rm{(5)}_\ell$] There exists a form 
$$ g \in M_{t}(\Gamma_0(\ell))^0 $$
satisfying 
$$ f(q^\ell) - f(q) \equiv  g(q) \mod p^k. $$
\end{enumerate}
\end{thm}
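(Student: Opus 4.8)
The plan is to unwind the description of $\mc{B}_{t/j,k}$ given just before the statement and translate each defining condition (i), (ii), together with the fact that we are working in the quotient $(M_{t+j(p-1)})^0_{\ZZ/p^k}/(M_t)^0_{\ZZ/p^k}$, into the listed conditions (1)--(5)$_\ell$. First I would observe that since $\ell$ is a topological generator of $\ZZ_p^\times$, condition (i) — that $(\ell^{t+j(p-1)}-1)f$ lies in the subspace $(M_t)^0_{\ZZ/p^k}$ — combined with $j(p-1)\equiv 0 \bmod (p-1)p^{k-1}$ (forced by $p^{k-1}\mid j$), simplifies: $\nu_p(\ell^{t+j(p-1)}-1)=\nu_p(\ell^t-1)$, and for $f$ to have order exactly $p^k$ in the quotient we need $\nu_p(\ell^t-1)\ge k$, i.e. $t\equiv 0\bmod (p-1)p^{k-1}$. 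This gives condition (1), and simultaneously shows that once (1) holds, condition (i) is automatic (the coefficient $\ell^{t+j(p-1)}-1$ already kills everything mod $p^k$). Then condition (ii) becomes simply $f(q^\ell)-f(q)\equiv g(q)\bmod p^k$ for some $g\in M_t(\Gamma_0(\ell))^0_{\ZZ/p^k}$, which is condition (5)$_\ell$ after lifting $g$ to characteristic zero.

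Next I would handle the "additive generator of order $p^k$" bookkeeping. An element of the quotient $(M_{t+j(p-1)})^0_{\ZZ/p^k}/(M_t)^0_{\ZZ/p^k}$ is represented by a mod $p^k$ modular form $f$; I lift it to an honest form $f\in M_{t+j(p-1)}^0$ over $\ZZ_{(p)}$. The condition that $f$ is nonzero in the quotient mod $p$ — equivalently that $f$ generates a subgroup containing an element of order $p$, combined with being killed after multiplication by $\ell^t-1$-type scalars — should be exactly: $f(q)\not\equiv 0\bmod p$ (condition (2)) together with $f\notin (M_t)^0_{\ZZ/p^k}+p(M_{t+j(p-1)})^0_{\ZZ/p^k}$. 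By Serre's theorem (Theorem~\ref{thm:Serre}), $f$ lying in $(M_t)^0_{\ZZ/p^k}$ means $f\equiv E_{p-1}^j f'$ for some $f'\in M_t^0$ mod $p^k$; more generally $f$ having weight filtration $<t+j(p-1)$ mod $p^k$ is precisely the existence of $f'\in M_{t'}^0$ with $t'<t+j(p-1)$ and $f'(q)\equiv f(q)\bmod p^k$. So the statement "$f$ represents an additive generator of order exactly $p^k$" unpacks into: order at most $p^k$ is automatic from (1); order at least $p^k$, i.e. $f\notin (M_t)^0_{\ZZ/p^k}$ already mod $p$ — wait, more care is needed here — is what conditions (2) and (4) jointly encode. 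I would phrase (4) carefully using Serre's theorem: the minimal weight in which $f$ is represented mod $p^k$ is $t+j(p-1)$, which is what prevents $f$ from being divisible (in the group-theoretic sense) by forms of lower weight, hence guarantees order exactly $p^k$ and not less.

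The remaining conditions (3) and the $\Delta^{-1}$-twist are pure normalization. Since we are computing $H^0(C(\ell)^\bullet/(p^\infty,v_1^\infty))_{2t}$, not the $\Delta^{-1}$-inverted version, I would use the basis of Lemma~\ref{lem:basis} to pin down which meromorphic forms in $M^0_{t+j(p-1)}$ actually contribute: the $q$-order condition $\ord_q f(q)>\frac{t}{12}$ or $\ord_q f(q)=\frac{t-2}{12}$ is the condition that $f$, after the multiplication-by-$E_{p-1}^j$ identification, genuinely increases the weight filtration relative to $M_t^0$ and is not forced to be holomorphic — this is where the two cases $12\mid t$ versus $12\nmid t$ (equivalently the presence or absence of a $\Delta^k$ basis element) produce the dichotomy $\frac{t}{12}$ vs $\frac{t-2}{12}$. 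I expect the main obstacle to be exactly this step: carefully matching the $q$-expansion order bound against the Lemma~\ref{lem:basis} basis to show that conditions (2)--(4) cut out precisely a complementary summand, so that each valid $f$ gives a distinct additive generator and conversely, with no double-counting coming from the $E_{p-1}$-multiplication ambiguity. The density input (Theorem~\ref{thm:dense}) and Lemma~\ref{lem:key} are not needed for this theorem itself — they enter only when comparing with Miller--Ravenel--Wilson — so the proof here is entirely a manipulation of the cochain complex $C^\bullet(\ell)$ together with Serre's congruence theorem.
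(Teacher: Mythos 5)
Your overall route is the same as the paper's: unwind the explicit description of $\mc{B}_{t/j,k}=H^0(C^\bullet(\ell)/(p^k,v_1^j))_{2t+2j(p-1)}$, use the basis of Lemma~\ref{lem:basis} and the resulting splitting (Lemma~\ref{lem:imageijk} is exactly the statement that condition (3) characterizes the image of the canonical splitting $\iota_{j,k}$) to pick canonical representatives, and use Theorem~\ref{thm:Serre} to translate lower-weight mod $p^k$ congruences into multiplication by powers of $E_{p-1}$, which is what condition (4) rules out. Your direct argument for condition (1) --- the integer $\ell^{t+j(p-1)}-1$ annihilates a class of exact order $p^k$, hence is divisible by $p^k$, and $\ell^{j(p-1)}\equiv 1 \bmod p^k$ since $p^{k-1}\mid j$ --- is correct, and is in fact shorter than the paper's argument, which first uses the splitting $r_{j,k-v}$ to show $g_1=0$ before drawing the same conclusion. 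You are also right that Theorem~\ref{thm:dense} and Lemma~\ref{lem:key} play no role here.

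Two points need tightening. First, you treat an additive generator of order $p^k$ of $H^0(C^\bullet(\ell)/(p^\infty,v_1^\infty))_{2t}=\colim_{k'}\colim_{j}\mc{B}_{t/j,k'}$ as if it were already given by a class in a mod $p^k$ group; a priori it is only visible as an order-$p^k$ element of some $\mc{B}_{t/j,k'}$ with $k'>k$, and one must divide by $p^{k'-k}$. The paper does this by observing that $\iota_{j,k'}(b')$ has exact order $p^k$ in $(M_{t+j(p-1)})^0_{\ZZ/p^{k'}}$, hence equals $p^{k'-k}f$ for a mod $p^k$ form $f$ whose class lies in $\mc{B}_{t/j,k}$; without this step the forward direction of the bijection is incomplete. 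Second, your division of labor between (2) and (4) is off where you say (4) ``guarantees order exactly $p^k$'': given (3), condition (2) alone gives exact order $p^k$, because $\iota_{j,k}$ embeds the quotient as a direct summand; condition (4), via Theorem~\ref{thm:Serre}, says precisely that the class is not in the image of $\cdot E_{p-1}^{p^{k-1}}\colon \mc{B}_{t/j-p^{k-1},k}\to\mc{B}_{t/j,k}$, i.e.\ it fixes the minimal $j$ and eliminates exactly the double-counting you flag at the end of your sketch. With those two adjustments your proposal coincides with the paper's proof.
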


We will need to make use of the following lemma.

\begin{lem}\label{lem:splitting}
There exist homomorphisms
$$ r_m: M_{t+m}^0 \rightarrow M_{t}^0 $$
such that, if 
$j \equiv 0 \mod p^{k-1}$,
the short exact sequences
$$ 0 \rightarrow (M_{t})_{\ZZ/p^k}^0 \xrightarrow{\cdot E^j_{p-1}} 
(M_{t+j(p-1)})_{\ZZ/p^k}^0 \rightarrow
\frac{(M_{t+j(p-1)})_{\ZZ/p^k}^0}{(M_{t})_{\ZZ/p^k}^0}
\rightarrow 0
$$
are split by the mod $p^k$ reduction of $r_{j(p-1)}$.
\end{lem}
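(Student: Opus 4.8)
The plan is to construct the splitting maps $r_m$ directly on $q$-expansions, using the explicit integral basis of $M_t^0$ provided by Lemma~\ref{lem:basis}. First I would recall that by Lemma~\ref{lem:basis} every $M_t^0$ has the integral basis consisting of the monomials $\Delta^k e_{t-12k}$ (with $t-12k \ge 4$ even), together with $\Delta^k$ when $t = 12k$. The key feature of this basis is that $\Delta^k e_{t-12k}(q) = q^k + \cdots$, so a modular form in $M_t^0$ is uniquely determined by, and can be reconstructed from, a finite Laurent-polynomial ``principal part'' in $q$. Concretely, I would define $r_m \colon M_{t+m}^0 \to M_t^0$ as follows: given $f \in M_{t+m}^0$, expand $f$ in the basis for $M_{t+m}^0$, discard all basis elements $\Delta^k e_{t+m-12k}$ whose leading exponent $k$ does not occur as a leading exponent of a basis element of $M_t^0$ (equivalently, truncate the ``tail'' of the $q$-expansion beyond what weight $t$ can support), and reinterpret the remaining coefficients as the coordinates of an element of $M_t^0$ in its own basis. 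This is manifestly a homomorphism since it is linear in the basis coordinates.

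Next I would verify the splitting property. Multiplication by $E_{p-1}^j$ sends the basis element $\Delta^k e_{t-12k}$ of $M_t^0$ to an element of $M_{t+j(p-1)}^0$ whose $q$-expansion is again $q^k + \cdots$; since (by Lemma~\ref{lem:vonStaudt} and the congruence $E_{p-1}^{p^{j-1}}(q) \equiv 1 \bmod p^j$, as recorded in $(\ref{eq:Ep-1cong})$) we have $E_{p-1}^j(q) \equiv 1 \bmod p^k$ whenever $p^{k-1} \mid j$, the image $E_{p-1}^j \cdot \Delta^k e_{t-12k}$ is congruent mod $p^k$ to a modular form of weight $t+j(p-1)$ with the same principal part $q^k + \cdots$ as $\Delta^k e_{t-12k}$ had in weight $t$. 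Therefore, modulo $p^k$, the composite $r_{j(p-1)} \circ (\cdot E_{p-1}^j)$ fixes each basis element of $(M_t)_{\ZZ/p^k}^0$, and hence is the identity on $(M_t)_{\ZZ/p^k}^0$. This is exactly the statement that the mod $p^k$ reduction of $r_{j(p-1)}$ splits the displayed short exact sequence. One must also check that $r_m$ is compatible with reduction mod $p^k$, which is immediate because $r_m$ was defined by an integral (indeed basis-coordinate) formula independent of the coefficient ring.

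The main obstacle, and the point requiring the most care, is bookkeeping about \emph{which} leading exponents $k$ appear in the basis of $M_t^0$ versus $M_{t+m}^0$, so that the truncation operation is well-defined and actually lands in $M_t^0$ rather than producing spurious terms; in particular one must handle the exceptional basis element $\Delta^k$ in the weight-$12k$ case consistently between source and target. Once the indexing is set up correctly, the homomorphism property is automatic and the splitting identity follows from the single input $E_{p-1}^j(q) \equiv 1 \bmod p^k$ for $p^{k-1} \mid j$. I would present the construction basis-element by basis-element to make the truncation unambiguous, and then remark that the resulting $r_m$ is independent of $t$ in the obvious compatible sense, which is all that is needed for the application in Theorem~\ref{thm:maindelta-1}.
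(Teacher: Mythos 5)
Your construction of $r_m$ is the same as the paper's (project onto the sub-basis of Lemma~\ref{lem:basis}, sending $\Delta^n e_{t+m-12n}$ to $\Delta^n e_{t-12n}$ when the latter exists and to $0$ otherwise), but your verification of the splitting identity has a genuine gap: you assert that it follows from the single input $E_{p-1}^j(q) \equiv 1 \mod p^k$, and that is not enough. The map $r_{j(p-1)}$ is defined through basis coordinates in weight $t+j(p-1)$, and a congruence of $q$-expansions does not by itself control those coordinates. Knowing $(E_{p-1}^j\,\Delta^n e_{t-12n})(q) \equiv (\Delta^n e_{t-12n})(q) \mod p^k$ only pins down the leading coordinate; the coordinates on the higher basis elements $\Delta^m e_{t+j(p-1)-12m}$, $m>n$, are governed by the difference $e_{t-12n}(q)-e_{t+j(p-1)-12n}(q) \mod p^k$, i.e.\ by a comparison of the chosen forms $e_\bullet$ in \emph{different} weights. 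This is precisely Condition~(3) of Lemma~\ref{lem:ek} (that $e_{k_1}\equiv e_{k_2} \mod p^k$ when $k_1 \equiv k_2 \mod (p-1)p^{k-1}$), which the paper's proof invokes and which your argument never uses; the $e_k$ were constructed in Lemma~\ref{lem:ek} exactly to have this coherence. With it, $E_{p-1}^j\,\Delta^n e_{t-12n} \equiv \Delta^n e_{t+j(p-1)-12n}$ as elements of $(M_{t+j(p-1)})^0_{\ZZ/p^k}$ (same weight, same $q$-expansion), and $r_{j(p-1)}$ visibly returns $\Delta^n e_{t-12n}$. Your parenthetical claim that the basis-coordinate projection is ``equivalently'' a truncation of the $q$-expansion is the same conflation: the truncated $q$-expansion does not determine the weight-$(t+j(p-1))$ coordinates unless the tails of the two bases agree mod $p^k$.

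Without that input the identity you assert can fail. Take $p=5$, $k=1$, $j=1$, $t=16$, and choices satisfying only Condition~(2) of Lemma~\ref{lem:ek}: $e_4=E_4$, $e_8=E_4^2$, $e_{16}=E_4^4$, $e_{20}=E_4^2E_6^2$. Then $e_{16}E_4 = E_4^5 = e_{20} + 1728\,\Delta e_8$ in the weight-$20$ basis, so $r_4(e_{16}E_4) = e_{16} + 1728\,\Delta e_4 \not\equiv e_{16} \mod 5$ (since $1728 \equiv 3$), even though $E_4(q)\equiv 1 \mod 5$. So the splitting is not a formal consequence of $E_{p-1}^j(q)\equiv 1 \mod p^k$; you must also invoke the cross-weight congruences among the $e_k$. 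Once you add that step, your argument coincides with the paper's.
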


\begin{proof}
Using the basis of Lemma~\ref{lem:basis}
we define explicit splitting morphisms
$$ r_m: M_{t+m} \rightarrow M_{t} $$
whose effect on basis vectors is given by
$$
r_m(\Delta^{n} e_{t+m-12n}) = 
\begin{cases}
\Delta^n e_{t-12n}, & t-12n = 0, \: \mr{or} \: t-12n = 2i \: \mr{for} \: 
i \ge 2, \\
0,  & \mr{otherwise}.
\end{cases}
$$
We just need to verify that $r_{j(p-1)}$ reduces to give the appropriate 
splittings.
By Condition~(3) of Lemma~\ref{lem:ek}, and (\ref{eq:Ep-1cong}), 
we have
$$ e_t(q)E^j_{p-1}(q) \equiv e_t(q) \equiv e_{t+j(p-1)}(q) \mod p^k. $$
We therefore compute
\begin{align*}
r_{j(p-1)}(\Delta^n e_{t - 12n}E^j_{p-1}) 
& \equiv r_{j(p-1)}(\Delta^n e_{t+j(p-1) - 12n}) \mod p^k \\
& \equiv \Delta^n e_{t - 12n} \mod p^k.
\end{align*}
\end{proof}

The splittings of Lemma~\ref{lem:splitting} induce splitting homomorphisms
which give short exact sequences
\begin{equation}\label{eq:splitting}
0 \leftarrow (M_{t})_{\ZZ/p^k}^0 \xleftarrow{r_{j,k}} 
(M_{t+j(p-1)})_{\ZZ/p^k}^0 \xleftarrow{\iota_{j,k}}
\frac{(M_{t+j(p-1)})_{\ZZ/p^k}^0}{(M_{t})_{\ZZ/p^k}^0}
\leftarrow 0
\end{equation}
which are compatible as $k$ and $j$ vary.

\begin{lem}\label{lem:imageijk}
For $t$ even, 
the image of the homomorphism $\iota_{j,k}$ is given by
$$
\{ f \in (M_{t+j(p-1)})_{\ZZ/p^k}^0 \: : \: \ord_q f(q) >
\frac{t}{12} \: \mr{or} \: \ord_q f(q) = \frac{t-2}{12} \}.
$$
\end{lem}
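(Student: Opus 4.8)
The plan is to identify $\mathrm{im}(\iota_{j,k})$ with $\ker(r_{j,k})$, compute the latter on the explicit basis of Lemma~\ref{lem:basis}, and translate the resulting weight conditions into the stated conditions on $\ord_q$.

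First I would record that, since $r_{j,k}$ (the mod $p^k$ reduction of $r_{j(p-1)}$) is a retraction of $\cdot\, E^j_{p-1}$ by Lemma~\ref{lem:splitting}, the induced section $\iota_{j,k}$ of the quotient map in (\ref{eq:splitting}) fits into a direct sum decomposition $(M_{t+j(p-1)})^0_{\ZZ/p^k} = E^j_{p-1}\cdot(M_t)^0_{\ZZ/p^k} \oplus \ker(r_{j,k})$, so that $\mathrm{im}(\iota_{j,k}) = \ker(r_{j,k})$. Next, using the integral basis $\{\Delta^n e_{t+j(p-1)-12n}\}\cup\{\Delta^n\}$ of $M^0_{t+j(p-1)}$ from Lemma~\ref{lem:basis}, the explicit formula for $r_m$ given in the proof of Lemma~\ref{lem:splitting} shows that $r_{j(p-1)}$ carries each such basis vector either isomorphically onto the corresponding basis vector of $M^0_t$ (when $t-12n = 0$ or $t-12n \ge 4$) or to $0$ (otherwise). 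Hence $r_{j(p-1)}$, and after reduction also $r_{j,k}$, is split on the basis, and $\ker(r_{j,k})$ is exactly the $\ZZ/p^k$-span of those $\Delta^n e_{t+j(p-1)-12n}$ with $t-12n \notin \{0\}\cup\{\,\text{even}\ge 4\,\}$.

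The remaining step is to compute $q$-orders. Because $\Delta(q) = q+\cdots$ and $e_m(q) = 1+\cdots$, each basis vector $\Delta^n e_{t+j(p-1)-12n}$ has $\ord_q = n$, and since distinct basis vectors have distinct $q$-orders, the order of any nonzero $\ZZ/p^k$-linear combination is the minimum of the orders of the vectors occurring in it (the leading coefficient cannot be killed). Using that $t$ is even, the condition $t-12n \notin\{0\}\cup\{\,\text{even}\ge 4\,\}$ is equivalent to $12n = t-2$ or $12n\ge t+2$, i.e. to $\ord_q = \frac{t-2}{12}$ or $\ord_q > \frac{t}{12}$. One inclusion is then immediate: any $f\in\mathrm{im}(\iota_{j,k})$ is a combination of such basis vectors, so its order satisfies the condition. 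For the converse, given $f$ with $\ord_q f > \frac{t}{12}$ (resp. $\ord_q f = \frac{t-2}{12}$), every basis vector occurring in $f$ has order $\ge \ord_q f$, and a short arithmetic check shows each such order is again $>\frac{t}{12}$ or $=\frac{t-2}{12}$; hence $f\in\ker(r_{j,k}) = \mathrm{im}(\iota_{j,k})$.

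The only point requiring care — the main obstacle, such as it is — is the bookkeeping at the boundary weight $t-12n = 2$: one must check that in the case $\ord_q f = \frac{t-2}{12}$ no basis vector of order $\frac{t}{12}$ can intervene, which is precisely the parity observation that $12\mid t-2$ forces $12\nmid t$. Everything else is a direct unwinding of Lemmas~\ref{lem:basis} and \ref{lem:splitting}.
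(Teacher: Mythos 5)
Your proof is correct and follows essentially the same route as the paper: both identify the image of $\iota_{j,k}$ with the span of the basis vectors $\Delta^n e_{t+j(p-1)-12n}$ complementary to (the $E_{p-1}^j$-multiples of) the basis of $(M_t)^0_{\ZZ/p^k}$ from Lemma~\ref{lem:basis}, and then read off the condition on $\ord_q$ from the leading terms $q^n$. The paper's proof is just a terser version of this; your extra care with the converse inclusion (the observation that $12\mid t-2$ rules out an intervening basis vector of order $t/12$) fills in a detail the paper leaves implicit.
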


\begin{proof}
A basis of $(M_t)^0_{\ZZ/p^k}$ is given by
$$ \{ \Delta^n e_{t-12n} \: : \: n \le \frac{t}{12}, \: n \ne
\frac{t-2}{12} \}. $$
The image of this basis under $\iota_{j,k}$ is spanned by
$$ \{ \Delta^n e_{t-12n+j(p-1)} \: : \: n \le \frac{t}{12}, \: n \ne
\frac{t-2}{12} \}. $$
Since
$$ \Delta^n e_{t-12n+j(p-1)} = q^n + \cdots $$
we deduce the result.
\end{proof}

\begin{proof}[Proof of Theorem~\ref{thm:maindelta-1}]
Suppose that $b' \in \mc{B}_{t/j,k'}$ is an additive generator of order
$p^k$. Let $f'$ be the lift
$$ f' := \iota_{j,k'} (b') \in
(M_{t+j(p-1)})_{\ZZ/p^{k'}}^0. $$
Since $b'$ is assumed to be an
additive generator or order $p^k$ and $\iota_{j,k'}$ is injective, 
we deduce that $f'$ is a modular form in
$(M_{t+j(p-1)})_{\ZZ/p^{k'}}$ of exact order $p^k$.  Hence $f' = p^{k'-k}f$
for some modular form $f \in (M_{t+j(p-1)})_{\ZZ/p^k}^0$.  It is
simple to check that the image 
$$ b \in
\frac{(M_{t+j(p-1)})_{\ZZ/p^k}^0}{(M_t)_{\ZZ/p^k}^0} $$
represents an element of $\mc{B}_{t/j,k}$.

It follows that 
the additive generators of order $p^k$ in
$$ H^0(C(\ell)^\bullet/(p^\infty, v_1^\infty))_t = \colim_{k'}
\colim_{\substack{j =  sp^{k'-1} \\ s \ge 1}} \mc{B}_{t/j,k'} $$
exactly correspond to the additive generators of order $p^k$ in 
$\mc{B}_{t/j,k}$ 
which are not in the image of the inclusion
$$ \cdot E^{p^{k-1}}_{p-1} : \mc{B}_{t/j-p^{k-1},k} \hookrightarrow
\mc{B}_{t/j,k}. $$
Suppose that $b$ is such an additive generator.  Let $f$ be the lift
$$ f := \iota_{j,k} (b) \in
(M_{t+j(p-1)})_{\ZZ/p^{k}}^0. $$
Then by Lemma~\ref{lem:imageijk} the lift $f$ satisfies
$$ \ord_q f(q) > \frac{t}{12} \: \mr{or} \: \ord_q f(q) = \frac{t-2}{12}.
$$
From the definition of $\mc{B}_{t/j,k}$ we have 
$$
\begin{array}{cl}
\mr{(i)} & (\ell^{t+j(p-1)} - 1)f(q) = g_1(q), \\
& \: \mr{for} \: g_1 \in
(M_{t})^0_{\ZZ/p^k} \\
\mr{(ii)} & \ell^{t+j(p-1)} f(q^\ell) - f(q) = g_2(q), \\
& \: \mr{for} \: g_2 \in M_t(\Gamma_0(\ell))^0_{\ZZ/p^k}
\end{array}
$$
Since $j \equiv 0 \mod p^{k-1}$ we deduce that
$$ \ell^{t+j(p-1)} \equiv \ell^t. $$
Let $v = \nu_p(\ell^t - 1)$.
Condition (i) above implies that 
$$ f(q) \equiv \frac{g_1(q)}{\ell^t-1} \mod p^{k-v}. $$
But, if $b'' \in \mc{B}_{t/j,k-v} $ is the image of the mod $p^{k-v}$ reduction 
of $b$, then 
$$ f(q) \equiv \iota_{j,k-v} (b'') \mod p^{k-v} $$
and thus, by the exactness of (\ref{eq:splitting}), we have
$$ r_{j, k-v}( f) = g_1 = 0. $$
Thus we actually have
$$ (\ell^t -1)f(q) \equiv 0 \mod p^k. $$
Since $f(q)$ has order $p^k$, we deduce that
$$ \ell^t  \equiv 1 \mod p^k. $$
Since $\ell$ is a topological generator of $\ZZ_p^\times$, we deduce that
$$ t \equiv 0 \mod (p-1)p^{k-1}. $$
Thus condition (ii) may be rewritten as
$$ f(q^\ell) - f(q) = g_2(q) \: \mr{for} \: 
g_2 \in M_t(\Gamma_0(\ell))^0_{\ZZ/p^k}. $$
We have therefore verified conditions (1)--(5) of
Theorem~\ref{thm:maindelta-1}.

For the converse direction, suppose $f \in (M_{t+j(p-1)})_{\ZZ/p^k}$ 
satisfies conditions (1)--(5) of
Theorem~\ref{thm:maindelta-1}.
Then by Lemma~\ref{lem:imageijk}, $f$ is in the image of $\iota_{j,k}$.
Consider the image
$$ b = [f] \in \frac{(M_{t+j(p-1)})^0_{\ZZ/p^k}}{(M_t)^0_{\ZZ/p^k}}.
$$
of $f$ in the quotient.
Observe that by (2), the element $b$ has order $p^k$.  We just need to
verify that it is an element of $\mc{B}_{t/j,k}$, which amounts to seeing
that $f$ satisfies conditions (i) and (ii) above.  But condition~(1)
implies that
$$ \ell^t \equiv \ell^{t+j(p-1)} \equiv 1 \mod p^k. $$
This immediately implies that $f$ satisfies condition (i).  Condition~(ii)
then follows from condition (5).
\end{proof}

Observe that if $S$ is a set of primes which contains $\ell$ and does not
contain $p$, then we have
$$ \Gamma_\ell \subseteq \Gamma_S \subset \MS_2. $$
Since $\Gamma_\ell$ is dense in $\MS_2$, the subgroup $\Gamma_S$ is dense
in $\MS_2$.  We therefore deduce the following lemma.

\begin{lem}
For a set of primes $S$ not containing $p$ and containing $\ell$, there is
an isomorphism
$$ H^0(\Gamma_S, \pi_t M_2E_2)^{\Gal(\FF_p)} \xrightarrow{\cong}
H^0(\Gamma_\ell, \pi_t M_2E_2)^{\Gal(\FF_p)}. $$
\end{lem}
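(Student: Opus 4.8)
The plan is to mimic the proof of Lemma~\ref{lem:key}. The essential input is that $\pi_t M_2 E_2$ is a discrete torsion $\MS_2$-module: by (\ref{eq:Mncolimit}) it is a colimit of the modules $\pi_t(M(I)^0 \wedge E_2)$, on each of which the $\MS_2$-action factors through a finite quotient, so the stabilizer of any element is open in $\MS_2$. Consequently, for any dense subgroup $G \subseteq \MS_2$, the $G$-invariants of $\pi_t M_2 E_2$ agree with the $\MS_2$-invariants: an element fixed by $G$ is fixed by its closure $\br{G} = \MS_2$.

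First I would record the chain of subgroups $\Gamma_\ell \subseteq \Gamma_S \subseteq \MS_2$ noted in the paragraph preceding the lemma, together with the fact that $\Gamma_\ell$ is dense in $\MS_2$ by Theorem~\ref{thm:dense} (applied to the one-element set $\{\ell\}$, using that $\ell$ generates $\ZZ_p^\times$); since $\Gamma_S \supseteq \Gamma_\ell$, the group $\Gamma_S$ is then dense in $\MS_2$ as well. Applying the observation of the previous paragraph to both $G = \Gamma_\ell$ and $G = \Gamma_S$ yields identifications
$$ H^0(\Gamma_\ell, \pi_t M_2 E_2) = (\pi_t M_2 E_2)^{\MS_2} = H^0(\Gamma_S, \pi_t M_2 E_2) $$
which are induced by the inclusions $\Gamma_\ell \hookrightarrow \Gamma_S \hookrightarrow \MS_2$; hence the restriction map $H^0(\Gamma_S, \pi_t M_2 E_2) \to H^0(\Gamma_\ell, \pi_t M_2 E_2)$ is an isomorphism. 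Passing to $\Gal(\FF_p)$-fixed points commutes with this isomorphism, since the $\Gal(\FF_p)$-action on $\pi_t M_2 E_2$ is compatible with all the restriction maps in sight, giving the claimed isomorphism on $\Gal(\FF_p)$-invariants.

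There is essentially no obstacle; the only point requiring care is the continuity (discreteness) of the coefficient module, which is exactly what legitimizes ``invariants under a dense subgroup equal invariants under the ambient group,'' and this has already been invoked in the proof of Lemma~\ref{lem:key}.
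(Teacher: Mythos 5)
Your proof is correct and follows essentially the same route as the paper: note $\Gamma_\ell \subseteq \Gamma_S \subset \MS_2$, deduce density of $\Gamma_S$ from density of $\Gamma_\ell$ (Theorem~\ref{thm:dense}), and then, exactly as in Lemma~\ref{lem:key}, use continuity of the $\MS_2$-action on $\pi_t M_2E_2$ to identify the invariants of either dense subgroup with the $\MS_2$-invariants, compatibly with the $\Gal(\FF_p)$-action. The only cosmetic remark is that the point you actually need (and also state) is that each element has open stabilizer, i.e.\ the coefficients form a discrete $\MS_2$-module; the stronger claim that the action on each $\pi_t(M(I)^0\wedge E_2)$ factors through a finite quotient is unnecessary for the argument.
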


In particular, letting $\ell'$ be a prime in $S$, we have
a zig-zag
$$ H^0(\Gamma_\ell, \pi_t M_2E_2)^{\Gal(\FF_p)} \xleftarrow{\cong}
H^0(\Gamma_S, \pi_t M_2 E_2)^{\Gal(\FF_p)} \hookrightarrow 
H^0(\Gamma_{\ell'}, \pi_t M_2 E_2)^{\Gal(\FF_p)}. $$
If $\ell'$ also generates $\ZZ_p^\times$, then the inclusion is an
isomorphism.
Corollary~\ref{cor:collapseisomorphism4} allows us to deduce:

\begin{cor}\label{cor:rigid}
For any prime $\ell' \ne \ell$
There is an
inclusion
$$ H^0(C(\ell)^\bullet/(p^\infty, v_1^\infty))_t \hookrightarrow
H^0(C(\ell')^\bullet/(p^\infty, v_1^\infty))_t. $$
If $f$ satisfies Conditions (1)-(4) and $\rm{(5)}_\ell$ of
Theorem~\ref{thm:maindelta-1}, then it satisfies condition
$\rm{(5)}_{\ell'}$.
\end{cor}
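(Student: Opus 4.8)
The plan is to deduce both assertions from Corollary~\ref{cor:collapseisomorphism4}, the lemma and zig-zag displayed just before the statement, and a building resolution for the spectrum $Q(S)$ with $S=\{\ell,\ell'\}$, constructed as in Section~\ref{sec:building}.

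\emph{The inclusion.} Corollary~\ref{cor:collapseisomorphism4} holds for every prime $\ne p$ (its proof invokes only $p\ge 5$ and Proposition~\ref{prop:actionbldg}), so it identifies $H^0(C(\ell)^\bullet/(p^\infty,v_1^\infty))_t$ with $H^0(\Gamma_\ell,\pi_tM_2E_2)^{\Gal(\FF_p)}$ and $H^0(C(\ell')^\bullet/(p^\infty,v_1^\infty))_t$ with $H^0(\Gamma_{\ell'},\pi_tM_2E_2)^{\Gal(\FF_p)}$. Taking $S=\{\ell,\ell'\}$ (a set containing $\ell$, so $\Gamma_S$ is dense in $\MS_2$ by Theorem~\ref{thm:dense}), the zig-zag displayed just before the statement identifies $H^0(\Gamma_S,\pi_tM_2E_2)^{\Gal(\FF_p)}$ with $H^0(\Gamma_\ell,\pi_tM_2E_2)^{\Gal(\FF_p)}$ and, via restriction of invariants along $\Gamma_{\ell'}\subseteq\Gamma_S$, includes it into $H^0(\Gamma_{\ell'},\pi_tM_2E_2)^{\Gal(\FF_p)}$. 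Composing with the two identifications gives the asserted inclusion.

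\emph{Compatibility with $q$-expansions.} The inclusions $(K^{p,\ell}_0)_+\subseteq(K^{p,S}_0)_+$ and $(K^{p,\ell'}_0)_+\subseteq(K^{p,S}_0)_+$ yield a span of spectra $Q(\ell)\leftarrow Q(S)\rightarrow Q(\ell')$ which, after applying $M_2$ and Theorem~\ref{thm:K2Ql}, realizes the span $E(\Gamma_\ell)\leftarrow E(\Gamma_S)\rightarrow E(\Gamma_{\ell'})$ attached to $\Gamma_\ell,\Gamma_{\ell'}\subseteq\Gamma_S$, and on $\pi_0$ it recovers the zig-zag above. Carrying out the construction of Section~\ref{sec:building} against the product building $\mc{B}(GL_2(\QQ_\ell))\times\mc{B}(GL_2(\QQ_{\ell'}))$ — still contractible, finite dimensional, and acted on by $(K^{p,S}_0)_+$ with compact open stabilizers — produces a finite cochain complex $C(S)^\bullet$ of modular forms computing $\pi_*Q(S)$, whose terms are built from $\TMF_p$, $\TMF_0(\ell)_p$, $\TMF_0(\ell')_p$, $\TMF_0(\ell\ell')_p$, and whose degree-$0$ cochain group in the weight relevant to $\mc{B}_{t/j,k}$ is again $(M_{t+j(p-1)})^0_{\ZZ/p^k}/(M_t)^0_{\ZZ/p^k}$. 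Since $\mc{B}(GL_2(\QQ_\ell))$ (respectively $\mc{B}(GL_2(\QQ_{\ell'}))$) embeds equivariantly into the product building as its base-vertex slice, the maps $C(S)^\bullet\to C(\ell)^\bullet$ and $C(S)^\bullet\to C(\ell')^\bullet$ are the identity on degree-$0$ cochains; hence, inside that common group, the corresponding cocycle subgroups satisfy $H^0(C(S)^\bullet/(p^k,v_1^j))\subseteq H^0(C(\ell)^\bullet/(p^k,v_1^j))\cap H^0(C(\ell')^\bullet/(p^k,v_1^j))$. Applying density of $\Gamma_S$ in $\MS_2$ to the $\MS_2$-module $(\pi_*E_2)/(p^k,v_1^j)$, via the $(p^k,v_1^j)$-analogue of Corollary~\ref{cor:collapseisomorphism4}, shows the first containment is an equality, so $\mc{B}_{t/j,k}=H^0(C(\ell)^\bullet/(p^k,v_1^j))$ is contained in $H^0(C(\ell')^\bullet/(p^k,v_1^j))=:\mc{B}'_{t/j,k}$.

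\emph{Conclusion.} Suppose $f$ satisfies Conditions~(1)--(4) and $(5)_\ell$ of Theorem~\ref{thm:maindelta-1}. By that theorem $[f]\in\mc{B}_{t/j,k}$ is an additive generator of order $p^k$ for a suitable $j$ with $p^{k-1}\mid j$, so by the previous paragraph $[f]\in\mc{B}'_{t/j,k}$. Condition~(1), $t\equiv0\bmod(p-1)p^{k-1}$, together with $p^{k-1}\mid j$, forces $(\ell')^{\,t+j(p-1)}\equiv1\bmod p^k$ (the exponent is a multiple of $(p-1)p^{k-1}=\lvert(\ZZ/p^k)^\times\rvert$ and $\ell'\in\ZZ_p^\times$); so, in the defining conditions of $\mc{B}'_{t/j,k}$, the first holds automatically while the second reduces to the existence of $g\in M_t(\Gamma_0(\ell'))^0$ with $f(q^{\ell'})-f(q)\equiv g(q)\bmod p^k$, i.e.\ to Condition~$(5)_{\ell'}$. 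The main obstacle is the middle step: constructing $C(S)^\bullet$ for a two-element $S$ via the product building in the style of \cite{BehrensLawson}, and checking that the comparison maps $C(S)^\bullet\to C(\ell)^\bullet$ and $C(S)^\bullet\to C(\ell')^\bullet$ are induced by maps of spectra restricting to the identity $\TMF_p\to\TMF_p$ on the base-vertex piece — equivalently, that the identification of the weight-$2k$ degree-$0$ cochains with $(M_k)^0_{\ZZ_p}$ is independent of which prime's building the totalization is taken against. The remaining steps are bookkeeping with indices together with one application of Theorem~\ref{thm:maindelta-1}.
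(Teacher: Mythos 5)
Your argument has the same mathematical core as the paper's: for $S=\{\ell,\ell'\}$ the group $\Gamma_S$ contains the dense subgroup $\Gamma_\ell$, hence is dense in $\MS_2$, so $H^0(\Gamma_S,\pi_tM_2E_2)^{\Gal(\FF_p)}\cong H^0(\Gamma_\ell,\pi_tM_2E_2)^{\Gal(\FF_p)}$, and restriction along $\Gamma_{\ell'}\subseteq\Gamma_S$ plus the two instances of Corollary~\ref{cor:collapseisomorphism4} give the inclusion; your final bookkeeping ($t+j(p-1)\equiv 0 \bmod (p-1)p^{k-1}$ forces $(\ell')^{t+j(p-1)}\equiv 1 \bmod p^k$, so condition (i) of $\mc{B}'_{t/j,k}$ is automatic and (ii) becomes $(5)_{\ell'}$) matches how the paper unwinds Theorem~\ref{thm:maindelta-1}. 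Where you diverge is the middle step: the paper simply says ``Corollary~\ref{cor:collapseisomorphism4} allows us to deduce'' the corollary, leaving implicit the point you rightly isolate, namely that the abstract inclusion must be the identity on underlying modular forms so that the same $f$ appears on the $\ell'$ side. To secure this you propose building a resolution of $Q(\{\ell,\ell'\})$ from the product of the two buildings and comparing cochain complexes $C(S)^\bullet\to C(\ell)^\bullet$, $C(S)^\bullet\to C(\ell')^\bullet$; this is plausible but is considerably more machinery than the paper uses, and you leave its verification as an acknowledged obstacle. It can be avoided: the $H^0$-identifications of Corollary~\ref{cor:collapseisomorphism4}, for either prime, are induced via Theorem~\ref{thm:K2Ql} by the map (\ref{eq:map}), which on the common degree-zero term is the single map $\TMF_p\to E_2$ classifying $(\td{C}_0,[\eta_0]_{K^p_0})$ — independent of which prime's building one totalizes against — so the zig-zag of invariants is automatically compatible with the common subquotient $(M_{t+j(p-1)})^0_{\ZZ/p^k}/(M_t)^0_{\ZZ/p^k}$ in which both $\mc{B}_{t/j,k}$ and $\mc{B}'_{t/j,k}$ live. (A minor caveat in your finite-stage argument: the $(p^k,v_1^j)$-analogue of Corollary~\ref{cor:collapseisomorphism4} needs a generalized Moore spectrum $M(k,j)$, which exists only for a cofinal family of indices; this is harmless since enlarging $j$ multiplies $f$ by a power of $E_{p-1}^{p^{k-1}}$, which does not change its $q$-expansion mod $p^k$.) So your proposal is essentially correct, but the product-building construction is an unnecessary detour whose details you have not supplied, whereas the needed compatibility follows from naturality of the maps already constructed in the paper.
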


We finish this section by observing that the results of this section
combine to give proofs of some of the theorems stated in
Section~\ref{sec:intro}

\begin{proof}[Proofs of Theorems~\ref{thm:main}, \ref{thm:converse},
\ref{thm:rigid}]
Corollary~\ref{cor:rigid} implies Theorem~\ref{thm:rigid}.  
The element
$$ \beta_{i/j,k} \in H^0(\MS_2, \pi_{2i(p^2-1) - 2j(p-1)} M_2
E_2)^{\Gal(\FF_p)} $$
detects a corresponding Greek letter element
$$ \beta_{i/j,k} \in \Ext^{2,*}_{BP_*BP}(BP_*, BP_*) $$
in the chromatic spectral sequence if $i > 0$ and $i \ne p^n$
\cite{MillerRavenelWilson} (if $ i = p^n$, then $j$ must be greater than
or equal to $p^n$). Thus Theorems~\ref{thm:main} and \ref{thm:converse}
follow from Theorem~\ref{thm:maindelta-1}.  Note that the modular forms $f =
f_{i/j,k}$ of Theorems~\ref{thm:main} and \ref{thm:converse} are taken to
be holomorphic at the cusps, whereas in Theorem~\ref{thm:maindelta-1}, they
are merely assumed to be meromorphic at the cusps.  This discrepancy is
resolved by noting that if $i, j, k$ are chosen such that $\beta_{i/j,k}$
exists in $\Ext_{BP_*BP}(BP_*, BP_*)$, then 
$$ t = i(p^2-1) - j(p-1) \ge 0. $$
Therefore,
condition~(2) of Theorem~\ref{thm:maindelta-1} guarantees that the modular
forms in question are holomorphic at the cusps.
\end{proof}

\section{Greek letter elements in the Hurewicz image of
$Q(\ell)$}\label{sec:greek}

Since the cosimplicial spectrum $Q(\ell)^\bullet$ is a cosimplicial object
in the category of $E_\infty$-ring spectra, 
the equivalence $Q(\ell) \simeq \Tot Q(\ell)^\bullet$
(\ref{eq:cosimplicial}) allows us to regard $Q(\ell)$ as an $E_\infty$-ring
spectrum.  In particular, it possesses a unit map
$$ S \rightarrow Q(\ell) $$
which, by Lemma~\ref{lem:E2local}, localizes to give a map
$$ S_{E(2)} \rightarrow Q(\ell). $$
In this section we prove:

\begin{thm}\label{thm:greek}
The images of the elements $\alpha_{i/j}$ and the elements $\beta_{i/j,k}$ 
under the homomorphism
$$ \pi_* S_{E(2)} \rightarrow \pi_*Q(\ell) $$
are non-trivial.
\end{thm}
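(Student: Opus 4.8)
The plan is to compare, via the unit map $S_{E(2)}\to Q(\ell)$, the chromatic spectral sequences of $S_{E(2)}$ and of $Q(\ell)$, and to track the Greek-letter classes through the low columns. Both spectral sequences have only the columns $n=0,1,2$ — for $S_{E(2)}$ because it is $E(2)$-local, for $Q(\ell)$ by Lemma~\ref{lem:E2local} — and $\alpha_{i/j}$ is a class in the $n=1$ column $\pi_*M_1S$ while $\beta_{i/j,k}$ is a class in the $H^0_c(\MS_2,-)$-part of the $n=2$ column $\pi_*M_2S$ (Theorem~\ref{thm:MillerRavenelWilson}), in each case detecting the named element. So I would reduce the theorem to two points: (a) the image of the relevant class in the corresponding column of the $E_1$-page for $Q(\ell)$ is non-zero; and (b) that image survives the chromatic spectral sequence of $Q(\ell)$ — where permanence is automatic by naturality, so the only issue is that no differential hits it. The decisive structural input for (b) is Corollary~\ref{cor:M0Ql}: $\pi_*M_0Q(\ell)$ is concentrated in degrees $0,-1,-2$.

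For $\alpha_{i/j}$ I would factor through $\TMF$. The map $Q(\ell)\to Q(\ell)^0=\TMF_p$ coming from the cosimplicial resolution~(\ref{eq:cosimplicial}) forms a commuting triangle with the unit maps, so it suffices that the composite $\pi_*M_1S\to\pi_*M_1Q(\ell)\to\pi_*M_1\TMF_p$ be non-zero on $\alpha_{i/j}$. Here $\pi_*M_1\TMF_p=(M_*)^0_{\ZZ_p}/p^\infty[E_{p-1}^{-1}]$ (Lemma~\ref{lem:MnTMF}), and — this is the theorem of Baker and Laures recalled in the introduction and reproved by the computations of Section~\ref{sec:M1Ql} — the image of $\alpha_{i/j}$ is the divided Eisenstein series $E_{(p-1)i}/p^{j}$, which is non-zero by Lemma~\ref{lem:vonStaudt}. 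Hence $\alpha_{i/j}$ maps to a non-zero class in $\pi_*M_1Q(\ell)$; being a permanent cycle, its image is one too, and the only differentials that can hit the $n=1$ column issue from $\pi_*M_0Q(\ell)$ (degrees $0,-1,-2$), whereas $\alpha_{i/j}$ lies in positive degree. So it detects a non-zero element of $\pi_*Q(\ell)$. As a byproduct I record that, in any degree $t\equiv 0\pmod{2(p-1)}$, both $\pi_tM_1S$ and $\pi_tM_1Q(\ell)$ are cyclic of the same order (Corollary~\ref{cor:M1Ql} and the standard computation of $\pi_*M_1S$) and the map between them carries the appropriate $\alpha$-generator to a generator; thus $\pi_*M_1S\to\pi_*M_1Q(\ell)$ is surjective in degrees $\equiv 0\pmod{2(p-1)}$.

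For $\beta_{i/j,k}$ I would use density. It generates a cyclic summand of order $p^k$ in $H^0_c(\MS_2,\pi_tM_2E_2)^{\Gal(\FF_p)}$, $t=2i(p^2-1)-2j(p-1)$, which is the filtration-$0$ part of $\pi_tM_2S$. By Theorem~\ref{thm:K2Ql}, Lemma~\ref{lem:equivariance}, the identification $M_2Q(\ell)\simeq((M_2E_2)^{h\Gamma_\ell})^{h\Fr_p}$, and Morava's change of rings theorem, the map $\pi_*M_2S\to\pi_*M_2Q(\ell)$ is induced by the inclusion $\Gamma_\ell\hookrightarrow\MS_2$; it is therefore filtration-preserving and on filtration $0$ is the restriction homomorphism $H^0_c(\MS_2,-)\to H^0(\Gamma_\ell,-)$, an isomorphism by Lemma~\ref{lem:key} (via Theorem~\ref{thm:dense}). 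So $\beta_{i/j,k}$ maps to an element of order $p^k$ in $\pi_tM_2Q(\ell)$, in particular non-zero. For survival: the $n=2$ column is the top one, so there are no outgoing differentials; an incoming $d_2$ would issue from $\pi_*M_0Q(\ell)$, impossible since $t$ is large (one checks $t>2$ for every $\beta$-class of Theorem~\ref{thm:main}); the only remaining threat is an incoming $d_1$ from $\pi_tM_1Q(\ell)$. Since $t\equiv 0\pmod{2(p-1)}$, all of $\pi_tM_1Q(\ell)$, $\pi_tM_2Q(\ell)$, $\pi_tM_1S$, $\pi_tM_2S$ sit in filtration $0$ (Corollaries~\ref{cor:collapseisomorphism1} and~\ref{cor:collapseisomorphism2} and Lemma~\ref{lem:collapseisomorphism3}), so this $d_1$ is a map of filtration-$0$ groups, compatible by naturality with the corresponding $d_1$ for $S_{E(2)}$, which on filtration $0$ is the differential $\Ext^0_{BP_*BP}(BP_*,M^1)\to\Ext^0_{BP_*BP}(BP_*,M^2)$ of the algebraic chromatic complex. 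Combining the surjectivity of $\pi_*M_1S\to\pi_*M_1Q(\ell)$ from the previous paragraph with the injectivity of $\pi_*M_2S\to\pi_*M_2Q(\ell)$ on filtration $0$ (Lemma~\ref{lem:key}), the image of the $Q(\ell)$-differential equals the image of that of the $S_{E(2)}$-differential; and $\beta_{i/j,k}$ does not lie in the latter, having been chosen by Miller--Ravenel--Wilson precisely to generate a summand of the cokernel of the algebraic $d_1$. Hence the image of $\beta_{i/j,k}$ survives to $E_\infty$ and detects a non-zero element of $\pi_*Q(\ell)$.

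The step I expect to be the main obstacle is this last one: ruling out the chromatic $d_1$ of $Q(\ell)$ with target in the $M_2$-column. The plan routes it through two substantial facts — the Eisenstein-series description of $\pi_*M_1Q(\ell)$ of Section~\ref{sec:M1Ql}, which yields the needed surjectivity, and the density isomorphism Lemma~\ref{lem:key}, which yields the needed injectivity — so that what remains is to verify that these fit together compatibly with the Morava and homotopy-fixed-point filtrations, which is the genuinely delicate bookkeeping. I would also be careful with the reading of ``$\beta_{i/j,k}\in\pi_*S_{E(2)}$'': for $i$ not a power of $p$ (or $i=p^n$ with $j\ge p^n$) the class $\beta_{i/j,k}$ is a genuine element of $\Ext^2_{BP_*BP}(BP_*,BP_*)$, detected in the chromatic spectral sequence, and the argument tracks its image under the induced map of chromatic spectral sequences.
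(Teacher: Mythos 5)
Your proposal is correct and follows essentially the same skeleton as the paper's proof: compare the chromatic spectral sequences along the unit map, use Corollary~\ref{cor:M0Ql} to dispose of differentials involving the $M_0$-column, use the density isomorphism (Lemma~\ref{lem:key}, together with Corollary~\ref{cor:collapseisomorphism1} and Lemma~\ref{lem:collapseisomorphism3}) to see that the $\beta$-classes have non-zero image in $\pi_t M_2Q(\ell)$ in the relevant degrees $t\equiv 0 \bmod 4$, and use the fact that $\pi_t M_1Q(\ell)$ is generated by images of the permanent cycles $\alpha_{i/j}$ to rule out the incoming chromatic $d_1$ --- your ``image of the $Q(\ell)$-differential equals the image of the $S_{E(2)}$-differential'' bookkeeping is a slightly more roundabout form of the paper's observation that a group generated by permanent cycles supports no $d_1$. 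The one genuine divergence is the crucial input that $\alpha_{i/j}$ maps to a generator of $\pi_t M_1 Q(\ell)$: you factor through $\TMF_p$ and cite Baker--Laures for the identification of the image with the divided Eisenstein series, whereas the paper proves this from scratch in Lemma~\ref{lem:e}, by mapping further to a form of $p$-adic $K$-theory $E/x$ attached to an ordinary elliptic curve (Serre--Tate theory), detecting $\alpha_{i/j}$ there by the classical $e$-invariant, and then matching orders against Corollary~\ref{cor:M1Ql}. Note that your parenthetical claim that the Eisenstein identification is ``reproved by the computations of Section~\ref{sec:M1Ql}'' is not accurate: that section computes the groups $\pi_*M_1Q(\ell)$ and exhibits Eisenstein generators, but says nothing about the Hurewicz image of $\alpha_{i/j}$, so the external citation (or an argument along the lines of Lemma~\ref{lem:e}) is carrying real weight at that point. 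With that caveat both routes work: the paper's choice buys a self-contained argument, yours buys brevity at the cost of importing the Laures/Baker results.
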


We first will need a lemma.

\begin{lem}\label{lem:e}
The map
$$ \pi_tM_1S \rightarrow \pi_t M_1Q(\ell) $$
is an isomorphism for $t \equiv 0 \mod 4$.
\end{lem}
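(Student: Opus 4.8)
The plan is to identify both sides with cohomology of the Morava stabilizer group and invoke the density of $\Gamma_\ell$ in $\MS_2$. On the source side, Corollary~\ref{cor:collapseisomorphism1} (applied with $n=1$, where the hypothesis $2(p-1)\ge \max\{1,4\}$ holds for $p\ge 3$) identifies $\pi_t M_1 S$ with $H^s_c(\MS_1, \pi_{t+s}M_1 E_1)^{\Gal(\FF_p)}$ for $t = 2k(p-1)-s$, $0\le s<2(p-1)$. On the target side, we do \emph{not} have a direct analogue of Morava's change of rings for $Q(\ell)$, but we have the next best thing: the chromatic analysis of Section~\ref{sec:M1Ql}. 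First I would recall from Corollary~\ref{cor:M1Ql} that $\pi_t M_1 Q(\ell)$ is $\ZZ/p^j$ for $t = 2(p-1)p^{j-1}s$ with $(s,p)=1$, and zero for $t\not\equiv 0,-1,-2 \bmod 2(p-1)$; in particular in degrees $t\equiv 0\bmod 4$ with $t\not\equiv 0\bmod 2(p-1)$ both groups vanish and there is nothing to prove, so the content is in degrees $t = 2(p-1)p^{j-1}s$.

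The heart of the argument is to compare the two computations in these degrees. On the sphere side, the classical computation of the $\alpha$-family (Miller--Ravenel--Wilson, or the $J$-homomorphism description recalled in the introduction) gives $\pi_{2(p-1)m - 1}M_1 S \cong \ZZ/p^{\nu_p(m)+1}$, generated by $\alpha_{m/\nu_p(m)+1}$; shifting by the chromatic degree $s=1$ this matches the group $\ZZ/p^j$ above with $m = p^{j-1}s$, $j = \nu_p(m)+1$. So the two abelian groups are abstractly isomorphic in every relevant degree. To upgrade this to the statement that the natural map $\pi_t M_1 S \to \pi_t M_1 Q(\ell)$ is an isomorphism, I would trace the unit map $S\to Q(\ell)$ through the monochromatic layers and the $E_1$-page identifications of Corollary~\ref{cor:collapseisomorphism2}: the map on $M_1$ is induced by the map of cosimplicial objects sending $\TMF$-style cochains to the constant functor. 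Concretely, the element $\alpha_{m/j}\in \pi_{2(p-1)m-1}M_1 S$ is detected in $H^1$ of the relevant chromatic cochain complex, and under the map to $Q(\ell)$ it goes to the class of the Eisenstein series $E_{2t}/p^j$ appearing in Proposition~\ref{prop:eisenstein} and Corollary~\ref{cor:M1QlH0} (via the shift $s=1$); this is exactly the Baker--Laures phenomenon advertised in the introduction, namely that the $\alpha$-family maps to Eisenstein series, and the common appearance of $B_{2t}/4t$ on both sides forces the orders to agree and the map to be surjective. Since source and target are finite cyclic of the same order and the map is surjective (equivalently injective), it is an isomorphism.

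The main obstacle I expect is making the detection statement precise: one must verify that the composite $\pi_t M_1 S \to \pi_t M_1 Q(\ell) \xrightarrow{\cong} H^1(C(\ell)^\bullet/p^\infty[v_1^{-1}])_{t+1}$ sends the generator $\alpha_{m/j}$ to a generator, i.e.\ that nothing is lost. The cleanest route is probably to factor through $K(1)$-localization, or through $E(1)$: the unit $S \to Q(\ell)$ becomes, after smashing with a Moore spectrum and localizing, the map $M_1 S \to M_1 \TMF_p$ (a retract of $M_1 Q(\ell)^0$) composed with the cosimplicial structure, and the fact that the $\alpha$-family is detected in $\pi_* L_{K(1)}\TMF$ (equivalently in the image of $J$ seen by $K$-theory / $\TMF$) is classical. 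One then checks the $d_0-d_1$ differential kills exactly the redundant copy, using Proposition~\ref{prop:diffq} as in the proof of Corollary~\ref{cor:M1Ql}. Assembling these, the map is an iso degree by degree for $t\equiv 0\bmod 4$.
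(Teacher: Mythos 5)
Your overall strategy coincides with the paper's: show the two groups are abstractly isomorphic in every degree $t\equiv 0 \bmod 4$ (Corollary~\ref{cor:M1Ql} on one side, the classical $\alpha$-family computation on the other), and then show the unit map detects, i.e.\ is injective, by factoring through $\TMF_p$ and ultimately through a form of $p$-adic $K$-theory. The difference is where the real work lies. Your second paragraph (``the common appearance of $B_{2t}/4t$ forces the map to be surjective'') is not an argument, as you acknowledge; and your proposed repair --- ``the $\alpha$-family is detected in $\pi_*L_{K(1)}\TMF$, equivalently in the image of $J$ seen by $K$-theory/$\TMF$, which is classical'' --- conflates two statements of very different difficulty. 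Detection by $K$-theory (the Adams $e$-invariant) is classical; detection by $\TMF$ is not ``equivalent'' to it and does not follow from it until you produce a ring map from $\TMF_p$ to a form of $p$-adic $K$-theory through which the relevant unit factors. Constructing exactly such a map is the entire content of the paper's proof: one chooses an ordinary elliptic curve $C/\br{\FF}_p$, takes $E=\mc{E}_{K^p_0}(U)$ over its formal deformation neighborhood, kills the deformation parameter, and uses Serre--Tate theory to identify the formal group of the canonical deformation with $\widehat{\GG}_m$, so that $E/x\simeq K_p\otimes_{\ZZ_p}W(\br{\FF}_p)$; since the unit of $E/x$ factors as a composite of ring maps $S\to Q(\ell)\to\TMF_p\to E\to E/x$, the classical injectivity $\pi_{2t}M_1S\hookrightarrow \pi_{2t}M_1K_p$ pulls back to injectivity into $\pi_tM_1Q(\ell)$. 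If instead you wish to quote the literature, the statement you actually need is Laures'/Baker's identification of the $1$-line of the $\TMF$-based Adams--Novikov $E_2$-term (the results cited in the introduction), not the $e$-invariant itself; with that citation made precise your argument closes, but as written the key detection step is asserted rather than proved.

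Two smaller points. First, injectivity into $\pi_tM_1\TMF_p$ transfers to $\pi_tM_1Q(\ell)$ only because the unit of $\TMF_p$ factors through $Q(\ell)$ (the augmentation $Q(\ell)\simeq\Tot Q(\ell)^\bullet\to Q(\ell)^0=\TMF_p$ is a map of ring spectra); your phrasing ``the map $M_1S\to M_1\TMF_p$ composed with the cosimplicial structure'' should be stated in this direction, since that factorization is what makes the outsourced detection usable. Second, your concluding step ``source and target are finite cyclic of the same order'' fails at $t=0$, where both groups are $\ZZ/p^\infty$ rather than finite; this case needs a separate (easy) argument, e.g.\ the paper's observation that $\pi_0S\to\pi_0Q(\ell)\cong\ZZ_p$ is a ring map, or the remark that an injective homomorphism $\ZZ/p^\infty\to\ZZ/p^\infty$ is automatically surjective.
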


\begin{proof}
Let $C$ be an ordinary elliptic curve over $\br{\FF}_p$, so that there is
an isomorphism of formal groups
$$ C^\wedge \cong \widehat{\GG}_m. $$
Let $U$ be the formal
neighborhood of the associated point of $\mc{M}(K^p_0)$, which carries a
universal deformation $\td{C}/U$ of $C$.  
Let 
$$ E = \mc{E}_{K^p_0}(U) $$
be the sections of the sheaf $\mc{E}$ over $U$.  By Serre-Tate theory, and
the deformation theory of $p$-divisible groups \cite[7.1]{BehrensLawson}, 
we deduce that
$$ U \cong \mr{Spf}(W(\bar{\FF}_p)[[x]]) $$
and therefore that $E$ is an even periodic ring spectrum with $\pi_0(E)
\cong W(\bar{\FF}_p)[[x]]$, with associated formal group given by
$\td{C}^\wedge$.  
The cofiber
$$ E \xrightarrow{\cdot x} E \rightarrow E/x $$
is an even periodic ring spectrum \cite{EKMM}.  The restriction $\td{C}_{\mr{can}}$ 
of the deformation
$\td{C}$ to $\pi_0(E/x) \cong W(\bar{\FF}_p)$ is the canonical deformation
of $C$ (the deformation whose $p$-divisible group splits).  The formal group $\td{C}^\wedge_{\mr{can}}$ is therefore a universal
deformation of $\GG_m/\bar{\FF}_p$, and we conclude that there is an
isomorphism
$$ \td{C}^\wedge_{\mr{can}} \cong \widehat{\GG}_m $$
between the formal group for $E/x$ and the multiplicative formal group.  In
particular, this implies that there is an equivalence of ring spectra
$$ K_p \otimes_{\ZZ_p} W(\bar{\FF}_p) \cong E/x, $$
where $K_p$ is the $p$-adic $K$-theory spectrum.
Now, the unit map $S \rightarrow K_p$ induces an inclusion
$$ \pi_{2t} M_1 S \hookrightarrow \pi_{2t} M_1 K_p $$
(it gives the Adams $e$-invariant).  Therefore the unit map for $E/x$
induces an inclusion
$$ \pi_{2t} M_1 S \hookrightarrow \pi_{2t} M_1 (E/x). $$
However, the unit for $E/x$ is homotopic to the composite
$$ S \rightarrow Q(\ell) \simeq \Tot Q(\ell)^\bullet \rightarrow Q(\ell)^0
= \TMF_p \rightarrow E \rightarrow E/x $$
because all of the maps in the composite are maps of ring spectra.
We deduce that the maps
$$ \pi_{t}M_1 S \rightarrow \pi_t M_1 Q(\ell) $$
are injective for $t \equiv 0 \mod 2$.  By Corollary~\ref{cor:M1Ql}, these
(finite) groups are abstractly isomorphic for $t \equiv 0 \mod 4$ and $t
\ne
0$.  The result for $t \ne 0$ therefore is proven.  The cases of $t = 0$
follows immediately from the fact that the map $\pi_0 S \rightarrow \pi_0
Q(\ell) \cong \ZZ_p$ is a
map of rings.
\end{proof}

\begin{proof}[Proof of Theorem~\ref{thm:greek}]
Consider the map of chromatic spectral sequences:
$$
\xymatrix{
\pi_k M_n S_{E(2)} \ar@{=>}[r] \ar[d] &
\pi_{k-n} S_{E(2)} \ar[d] 
\\
\pi_k M_n Q(\ell) \ar@{=>}[r] &
\pi_{k-n} Q(\ell)  
}
$$
The elements $\alpha_{i/j} \in \pi_{2i(p-1)}(M_1 S)$ are known to be
permanent cycles for $i > 0$, and therefore map to permanent cycles in the
chromatic spectral sequence for $Q(\ell)$.  
By Lemma~\ref{lem:e}, the images of $\alpha_{i/j}$ in $\pi_t M_1 S$ are
nontrivial, and generate these groups for $t \equiv 0 \mod 4$.
Since, by Corollary~\ref{cor:M0Ql}, $\pi_t M_0 Q(\ell)$ is zero for
$t \ne 0, -1, -2$, there are no non-trivial differentials
$$ d_1: \pi_t M_0 Q(\ell) \rightarrow \pi_t M_1 Q(\ell) $$
for $t > 0$.  We deduce that the images of the elements $\alpha_{i/j}$ in
the chromatic spectral sequence for $Q(\ell)$ are non-trivial permanent
cycles, and hence witness the non-triviality of the images of the elements
$\alpha_{i/j}$ in $\pi_* Q(\ell)$.  As a side-effect, we have also
determined that the groups $\pi_t M_1 Q(\ell)$ are generated by permanent
cycles for $t \equiv 0 \mod 4$.  That, combined with the fact that $\pi_t
M_0 Q(\ell)$ is zero for $t$ positive, allows us to deduce that there are
no non-trivial differentials killing elements of $\pi_t M_2 Q(\ell)$ for $t
\equiv 0 \mod 4$.  To complete the proof of the theorem, it suffices to
show that the images of the elements $\beta_{i/j,k}$ are non-trivial under
the homomorphism
\begin{equation}\label{eq:M2hur}
\pi_{t} M_2 S \rightarrow \pi_{t}
M_2 Q(\ell)
\end{equation}
where $t = 2i(p^2-1) - 2j(p-1)$.  But, for such $t$, the map
(\ref{eq:M2hur}) is given by the composite of isomorphisms
\begin{align*}
\pi_t M_2 S & 
\cong H^0_c(\MS_2, \pi_t M_2 E_2)^{\Gal(\FF_p)} \\
& \cong H^0(\Gamma_\ell, \pi_t M_2 E_2)^{\Gal(\FF_p)} \\
& \cong \pi_t M_2 Q(\ell)
\end{align*}
given by Corollary~\ref{cor:collapseisomorphism1}, Lemma~\ref{lem:key}, and
Lemma~\ref{lem:collapseisomorphism3}.
\end{proof}

\nocite{*}
\bibliography{beta}

\end{document}